\newcommand{\SL}{\sum\limits_}
\newcommand{\R}{\mathbb{R}}
\newcommand{\N}{\mathbb{N}}
\newcommand{\Z}{\mathbb{Z}}
\newcommand{\PR}{\mathbb{P}}
\newcommand{\E}{\mathbb{E}}
\newcommand{\sgn}{\text{sgn}}
\newtheorem{ccounter}{ccounter}[section]
\newtheorem{thm}{Theorem}
\newtheorem{lem}[ccounter]{Lemma}
\newtheorem{cor}[ccounter]{Corollary}
\newtheorem{defn}[ccounter]{Definition}
\newtheorem{prop}[ccounter]{Proposition}
\newtheorem{rem}[ccounter]{Remark}
\title{Limit Profile for the high-temperature Curie-Weiss model}
\author{Lazaros Karageorgiou\thanks{Mandoulides School, Thessaloniki, Greece, Email:karageorgioulazaros@gmail.com} \and Kyprianos-Iason Prodromidis\thanks{Department of Mathematics, Princeton University, Email:kp2702@princeton.edu}}
\date{}
\begin{document}
\maketitle
\begin{abstract}
In this paper, we consider the Ising model on the complete graph, also known as the Curie-Weiss model, and establish the limit profile of the Glauber dynamics in the high-temperature regime. Our strategy is a two-dimensional analog of the method developed by Olesker-Taylor and Schmid for the Bernoulli-Laplace urn: The two-coordinate chain associated to the model evolves near-deterministically until just before the cutoff window, while afterwards it approximates a two-dimensional diffusion.
\end{abstract}
\section{Introduction}

Studying the speed of convergence to equilibrium for Markov chains has become a central theme in probability theory. One of the most interesting phenomena that have emerged in this area is the \textit{cutoff phenomenon}, describing an abrupt transition from far from to near stationarity over a negligible period of time. This behavior has been verified in numerous settings, including card shuffling, random walks on graphs and interacting particle systems. Lately, there have been efforts to understand the nature of this abrupt transition, beyond proving its presence in specific models. For more information on the cutoff phenomenon and latest developments regarding research around it, we refer to \cite{Salez} and references therein.\\\\
In the past five years, there have been efforts to refine cutoff phenomenon results previously known by determining the exact way the chain interpolates between the far-from-equilibrium and the near-equilibrium regimes. This is captured by the \textit{limit profile} of the chain. As notable examples of chains in which the limit profile has been determined, we mention card shuffling (e.g. \cite{Nes}, \cite{Nes-OT1}, \cite{Nes-OT2}, \cite{Teyssier},\cite{Zhang}), random walks on graphs and groups (e.g. \cite{Her-OT}, \cite{Lub-Per}) and exclusion processes (e.g. \cite{Buffetov-Nejjar},\cite{He-Schmid},\cite{Lacoin}).\\\\
In this paper, we study the Glauber dynamics for the high-temperature Ising model on the complete graph, also known as the Curie-Weiss model, and characterize its limit profile. The cutoff phenomenon for this Markov chain was established in \cite{LLP} through analysis of the associated two-coordinate chain. Our approach draws inspiration from the method developed by Olesker-Taylor and Schmid in \cite{OTS} for the Bernoulli-Laplace urn model, which consists of two key steps: First, they show that the position of the birth-and-death chain associated with the model evolves approximately deterministically until it nears the center of mass. Subsequently, they prove that after this point, the trajectory of the chain converges in distribution to an Ornstein-Uhlenbeck process, enabling determination of the limit profile. We show that this technique, with suitable adaptations, applies effectively to the two-coordinate chain in the high-temperature Curie-Weiss model.
\\\\ 
Because of the ability to reduce to the two-coordinate chain, the complete graph provides the simplest version of this problem. In more complicated geometries, even proving cutoff in the high-temperature regime is a significantly harder problem. In the cases of lattices~\cite{L-S-lat} and the sufficiently high-temperature regime on bounded degree graphs~\cite{L-S-gen}, this has been proven by Lubetzky and Sly, using a remarkable method they introduced, called information percolation. However, their method does not imply any limit profile result and thus, this question remains, to our knowledge, open for any other graph geometry, apart from the mean-field case.
\subsection{Definitions and main result}
In this subsection, we introduce the models we will be working with and state the main result. We start with the definitions around mixing times of Markov chains, and what it means for a sequence of Markov chains to exhibit cutoff and to have a specific limit profile.
\begin{defn}[Mixing Time]
\begin{itemize}
\item 
The heat kernel $(H_t)_{t\ge0}$ of a continuous-time Markov chain $(X_t)_{t\ge0}$ on a finite state space $\mathcal{X}$ with transition-rate matrix $Q$ is defined as
$$H_t(x,y):=\PR(X_t=y|X_0=x),\ \text{for every}\ x,y\in\mathcal{X}.$$ 
\item
The $\varepsilon$-mixing time of the chain is defined to be
$$t_\text{mix}(\varepsilon):=\inf\left\{t\ge0:\text{d}(t)\le\varepsilon\right\},$$
where $\pi$ is the stationary distribution of the chain,
\begin{equation}\label{d_TV}
\lVert\mu-\nu\rVert_{\text{TV}}:=\sup\limits_{A\subseteq\mathcal{X}}|\mu(A)-\nu(A)|=\dfrac{1}{2}\SL{x\in\mathcal{X}}|\mu(x)-\nu(x)|
\end{equation}
is the \textbf{total variation distance} between two probability measures $\mu$ and $\nu$ on $\mathcal{X}$ and 
$$\text{d}(t):=\sup\limits_{x\in\mathcal{X}}\lVert H_t(x,\cdot)-\pi\rVert_{\text{TV}}.$$
\end{itemize}
\end{defn}
\begin{defn}[Cutoff and Limit Profile]
\begin{itemize}
\item
We say that a sequence $(X_t^{(n)})_{t\ge0}$ of Markov chains exhibits cutoff at time $t_n$ with window of order $w_n$, if $w_n=o(t_n)$ and
\begin{align*}
\lim\limits_{c\to-\infty}\liminf\limits_{n\to\infty}\text{d}_n(t_n+cw_n)=1,\ \
\lim\limits_{c\to\infty}\limsup\limits_{n\to\infty}\text{d}_n(t_n+cw_n)=0.
\end{align*}
\item 
Let $(X_t^{(n)})_{t\ge0}$ be a sequence of Markov chains that exhibit cutoff at time $t_n$ with window $w_n$. We say that the limit profile of the chain is a function $\Psi:\R\to(0,1)$ if for any $\theta\in\R$,
$$\lim\limits_{n\to\infty}\text{d}_n(t_n+\theta w_n)=\Psi(\theta).$$
\end{itemize}
\end{defn}
We move on to definitions related to the Ising model.
\begin{defn}
\begin{itemize}
\item 
Let $G$ be a graph on a vertex set $V$ and write $u\sim v$ if the vertices $u$ and $v$ are connected by an edge. The Ising model on $G$ with inverse temperature $\beta\ge0$ is the probability measure $\mu_\beta$ on $\{-1,1\}^V$ such that
$$\mu_\beta(\sigma)=\dfrac{1}{Z_{G,\beta}}\cdot\exp\left(\beta\SL{u\sim v}\sigma_u\sigma_v\right).$$
Here, $Z_{G,\beta}$ is a constant which ensures that $\mu_\beta$ is a probability measure.
\item 
The continuous-time Glauber dynamics for the Ising model is a Markov chain on the state space $\{-1,1\}^V$ with transition rates equal to $Q_\beta(\sigma,\tau)=0$ if $\sigma,\tau$ differ in at least two coordinates and
$$Q_\beta(\sigma,\tau)=\dfrac{\mu_\beta(\tau)}{\mu_\beta(\sigma)+\mu_\beta(\tau)},$$ 
otherwise. Alternatively, the chain can be described as follows: Each vertex of the graph gets a rate-1 Poisson clock. When the clock assigned to a vertex $u\in V$ rings, the spin at that vertex is instantly updated according to the Ising model, conditioned on the configuration on rest of the graph.
\item 
It is easy to check that the stationary distribution for the Glauber dynamics is the measure $\mu_\beta$.
\end{itemize}
\end{defn}
In this paper, we work on the case in which $G$ is the complete graph on $n$ vertices and the inverse temperature is equal to $\beta/n$ with for some $\beta\in[0,1)$. We determine the limit profile of the respective Glauber dynamics, by proving the following Theorem:
\begin{thm}\label{main-theorem}
For any $\theta\in\R$, let $t_{n,\theta}=(2(1-\beta))^{-1}\log(n)+\theta$. Then,
$$\text{d}_n(t_{n,\theta})\xrightarrow[n\to\infty]{}\Psi(\theta):=\left\lVert N\left(e^{-(1-\beta)\theta+c(\beta)},\dfrac{1}{1-\beta}\right)-N\left(0,\dfrac{1}{1-\beta}\right)\right\rVert_\text{TV},$$
where 
\begin{equation}\label{c(beta)}
c(\beta):=-\int_0^1\left(\dfrac{\dfrac{\beta t - \tanh(\beta t)}{t^2}}{1 - \dfrac{\tanh(\beta t)}{t}}\right)\ \text{d}t
\end{equation}
is a real constant depending only on $\beta\in[0,1)$.
\end{thm}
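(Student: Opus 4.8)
\medskip
\noindent\textbf{Proof outline.}\quad The plan is to carry out the Olesker--Taylor--Schmid program for the magnetization. Write $M_t:=\tfrac1n\sum_i\sigma_t(i)$ for the magnetization of the chain. By monotonicity of the ferromagnetic Glauber dynamics the supremum defining $\text{d}_n(t)$ is attained at the all-plus configuration $\mathbf 1$ (cf.\ \cite{LLP}), and since the chain started from $\mathbf 1$ is invariant under vertex permutations, the conditional law of $\sigma_t$ given $M_t$ is uniform on the corresponding level set --- exactly like the conditional law under $\mu_{\beta/n}$; as level sets for distinct values of $M$ are disjoint, this yields $\text{d}_n(t)=\lVert\mathcal L_{\mathbf 1}(M_t)-\pi^{\mathrm{mag}}\rVert_{\text{TV}}$, where $\pi^{\mathrm{mag}}$ is the law of $M$ under $\mu_{\beta/n}$. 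Everything therefore reduces to the rescaled birth-and-death chain $Z_t:=\sqrt n\,M_t$, started deterministically at $Z_0=\sqrt n$, with instantaneous drift $\sqrt n\,b(Z_t/\sqrt n)$ where $b(m):=\tanh(\beta m)-m$, and with jumps of size $2/\sqrt n$. (For a general, non-monotone initial configuration one would instead follow the two counts of currently-plus sites among the originally-plus and the originally-minus vertices; this is the genuinely two-dimensional chain, converging to a planar Ornstein--Uhlenbeck process, alluded to in the introduction, but the worst case collapses to the one-dimensional picture above.) It then suffices to prove (i) $Z_{t_{n,\theta}}\Rightarrow N\!\big(e^{-(1-\beta)\theta+c(\beta)},\,\tfrac1{1-\beta}\big)$, together with (ii) the classical high-temperature Curie--Weiss central limit theorem $\sqrt n\,M_\infty\Rightarrow N(0,\tfrac1{1-\beta})$, and finally to upgrade these weak limits to convergence $\text{d}_n(t_{n,\theta})\to\Psi(\theta)$ by a local central limit theorem for each of the two laws, which share a common lattice of mesh $2/\sqrt n$.

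For (i), fix a large constant $T>0$ and split $[0,t_{n,\theta}]$ at $t_{n,\theta}-T$. On $[0,t_{n,\theta}-T]$ the magnetization tracks its fluid limit $\varphi$, the solution of $\dot\varphi=b(\varphi)$ with $\varphi(0)=1$. The decisive structural input is that $b'(m)=\beta\big(1-\tanh^2(\beta m)\big)-1\le\beta-1<0$ for every $m$, so the flow is a uniform contraction; feeding this into a Dynkin-formula estimate for $M_t-\varphi(t)$ and for its predictable quadratic variation, and closing a Gr\"onwall inequality, I would obtain $\E[Z_{t_{n,\theta}-T}]=\sqrt n\,\varphi(t_{n,\theta}-T)(1+o(1))$ together with $\text{Var}(Z_{t_{n,\theta}-T})=O(1)$, uniformly in $n$ and $T$. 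This is where the constant $c(\beta)$ enters: from $\tfrac{d}{dt}\log\!\big(e^{(1-\beta)t}\varphi(t)\big)=\tfrac{\tanh(\beta\varphi(t))}{\varphi(t)}-\beta$ and the substitution $t\mapsto m=\varphi(t)$ with $dt=dm/b(m)$, one computes $\lim_{t\to\infty}e^{(1-\beta)t}\varphi(t)=\exp\!\Big(-\!\int_0^1\tfrac{\beta m-\tanh(\beta m)}{m\,(m-\tanh(\beta m))}\,dm\Big)=e^{c(\beta)}$, the constant of \eqref{c(beta)}; hence $\sqrt n\,\varphi(t_{n,\theta}-T)\to z_T:=e^{-(1-\beta)\theta+c(\beta)+(1-\beta)T}$.

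On the remaining interval $[t_{n,\theta}-T,\,t_{n,\theta}]$, of fixed length $T$, I would run a diffusion approximation: conditionally on $\mathcal F_{t_{n,\theta}-T}$, the time-shifted rescaled process converges in law, as $n\to\infty$, to the Ornstein--Uhlenbeck process $d\mathcal O_s=-(1-\beta)\,\mathcal O_s\,ds+\sqrt2\,dB_s$ --- the linearization $b'(0)=-(1-\beta)$ gives the drift and the total jump rate $\sim n/2$ times the squared jump size $(2/\sqrt n)^2$ gives diffusivity $2$ --- started from a subsequential limit of the law of $Z_{t_{n,\theta}-T}$. Since $\E[\mathcal O_T]=z_T e^{-(1-\beta)T}=e^{-(1-\beta)\theta+c(\beta)}$ exactly, while $\text{Var}(\mathcal O_T)=\text{Var}(\mathcal O_0)\,e^{-2(1-\beta)T}+\tfrac1{1-\beta}\big(1-e^{-2(1-\beta)T}\big)$, the $O(1)$ variance bound from the previous step shows that, letting $T\to\infty$ after $n\to\infty$, the contracting flow of the limiting diffusion forgets everything about the only crudely controlled law of $Z_{t_{n,\theta}-T}$ except its mean, leaving exactly $N\!\big(e^{-(1-\beta)\theta+c(\beta)},\,\tfrac1{1-\beta}\big)$; a routine diagonal argument exchanges the $n\to\infty$ and $T\to\infty$ limits. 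Together with (ii) and the local CLT this gives $\text{d}_n(t_{n,\theta})\to\lVert N(e^{-(1-\beta)\theta+c(\beta)},\tfrac1{1-\beta})-N(0,\tfrac1{1-\beta})\rVert_{\text{TV}}=\Psi(\theta)$.

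I expect the deterministic phase to be the main obstacle. One must follow the chain over a window of length $\asymp\log n$ and still pin down the mean of $Z_{t_{n,\theta}-T}$ to \emph{relative} precision $o(1)$ --- a multiplicative $1+o(1)$ slip in $\varphi(t_{n,\theta}-T)$ rescales $z_T$ and hence corrupts the limiting mean $e^{-(1-\beta)\theta+c(\beta)}$ --- and at the same time control $\text{Var}(Z_{t_{n,\theta}-T})$ by an absolute constant; this is precisely the step forcing a quantitative use of the global contractivity $b'<0$, in the spirit of \cite{OTS}. A secondary technical hurdle is the local CLT needed to pass from the weak convergence in (i) and (ii) to a genuine total-variation statement, for which I would bound the characteristic function of $Z_{t_{n,\theta}}$ away from the origin uniformly in $n$, again leaning on the diffusive description near the center.
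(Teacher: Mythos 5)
Your overall architecture — a deterministic phase controlled by the contractive drift, followed by a diffusion approximation on a fixed window, with the constant $c(\beta)$ emerging as $\lim_{t\to\infty}e^{(1-\beta)t}\varphi(t)$ from the fluid limit — is structurally parallel to the paper's (the change of variable $m=\varphi(t)$ does yield exactly \eqref{c(beta)}, and the one-dimensional OU drift $-(1-\beta)$ and stationary variance $1/(1-\beta)$ agree with the $U+V$ marginal of the paper's planar picture). The paper replaces your raw Gr\"onwall analysis with the bespoke Lyapunov-type function $f_n(k)=g(k/n)$ of Lemma 2.2, which linearizes the drift exactly and gives clean first- and second-moment formulas (Lemma \ref{1st-2nd-mom}); your Taylor-plus-contraction argument needs the variance bound $\mathrm{Var}(M_t)=O(1/n)$ to be fed back into the mean equation to get a relative-$o(1)$ bias, which is doable but more delicate than you acknowledge. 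Also, the paper closes the TV statement not with a local CLT but with a bespoke coalescence lemma (Lemma \ref{quick-coal}): once the chain at time $t_{n,-C}+C+\theta-\delta-\delta^{1/2}$ and a stationary copy are $\delta\sqrt n$-close, Lemmas \ref{mag-neg-drift}–\ref{two-coord-neg-drift} and a random-walk hitting-time bound couple them in time $\delta+\delta^{1/2}$ with probability $1-\eta$. That coupling route avoids the uniform characteristic-function control your local-CLT step would require and is where I would expect your writeup to run into the most technical work if you pursued it.

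The genuine gap is the very first sentence. You assert that by monotonicity the supremum defining $\mathrm{d}_n(t)$ is attained at $\mathbf 1$, citing \cite{LLP}. That is not what \cite{LLP} proves, and it is not a general fact about monotone chains. Monotone coupling shows that the worst-case coupling time over all pairs is achieved by $(\mathbf 1,-\mathbf 1)$, which gives $d(t)\le\bar d(t)\le 2\,\lVert H_t(\mathbf 1,\cdot)-\pi\rVert_{\mathrm{TV}}$ — a factor-two bound, adequate for proving cutoff but useless for identifying the limit profile, where the precise constant is the whole point. The paper instead proves (Proposition \ref{first-step}) that for any $\lambda\in[1/2,1)$ the limit from initial magnetization fraction $\lambda$ is $\Psi(\lambda,\theta)<\Psi(\theta)$, handles $\lambda=1$ for the upper bound by first running the dynamics for a fixed time $t_0$ to bring the magnetization into the bulk and then invoking Corollary \ref{first-step-sup}, and gets the lower bound by letting $\lambda\uparrow 1$; at no point is "worst case equals all-plus" used or proved. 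If you want to keep your one-dimensional reduction you must either supply a genuine proof that $\lVert H_t(\sigma_0,\cdot)-\pi\rVert_{\mathrm{TV}}$ is maximized at $\sigma_0=\mathbf 1$ (which would be a nice result but I do not know a reference), or else follow the paper and control all initial conditions via the two-coordinate chain $(U_t,V_t)$ of Lemma \ref{two-coord-red}, since for $|A_n|/n\to\lambda<1$ the conditional law of $\sigma_t$ given the magnetization is genuinely not uniform on the level set and the reduction to the magnetization chain fails.
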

In fact, as can be seen from Proposition \ref{first-step}, with our methods, we are able to calculate the limit profile from any possible starting point. Throughout the paper, we will denote by $\pi_n$ the stationary distribution of the model, i.e. the Ising model on the complete graph $K_n$ at inverse temperature $\beta/n$.
\subsection{Reduction to the two-coordinate chain}
Fix a configuration $\sigma_0\in\{-1,1\}^n$ and set $A=\{i\in[n]:\sigma_0(i)=1\}$. For $\sigma\in\{-1,1\}^n$, let 
$$U_{\sigma_0}(\sigma)=\SL{i\in A}\sigma(i)\ \ \text{and}\ \ V_{\sigma_0}(\sigma)=\SL{i\in A^c}\sigma(i).$$
Also, for $t\ge0$, let $U_t=U_{\sigma_0}(X_t)$ and $V_t=V_{\sigma_0}(X_t)$, where $\sigma_0=X_0$ and $(X_t)_{t\ge0}$ is the Glauber dynamics for the Curie-Weiss model. Due to these definitions, $(U_0,V_0)=(|A|,-n+|A|)$. When the choice of $\sigma_0$ is clear or does not matter, we drop the $\sigma_0$ from our notation. It was proven in \cite{LLP} (Lemma 3.4) that in terms of total-variation distance from stationarity, one only has to worry about the two-coordinate chain $(U_t,V_t)$. We repeat that proof, for convenience.
\begin{lem}\label{two-coord-red}
Let $H_t^{(X)},H_t^{(U,V)}$ be the heat kernels for $(X_t)_{t\ge0}$ and $((U_t,V_t))_{t\ge0}$, respectively and $\pi_{U,V}$ be the stationary distribution of the two-coordinate chain. Then
$$\left\lVert H_t^{(X)}(\sigma_0,\cdot)-\pi_n\right\rVert_\text{TV}=\left\lVert H_t^{(U,V)}((|A|,-n+|A|),\cdot)-\pi_{U,V}\right\rVert_\text{TV}$$
\end{lem}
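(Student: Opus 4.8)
The plan is to view this identity as an instance of the general fact that \emph{lumping} a Markov chain along the fibers of a symmetry group preserves total-variation distance from equilibrium, provided both the time-$t$ law and the stationary law are uniform on those fibers. So the first step is to exhibit the symmetry: let $G:=\mathrm{Sym}(A)\times\mathrm{Sym}(A^{c})$ act on $\{-1,1\}^n$ by permuting coordinates within $A$ and within $A^{c}$. Two configurations lie in the same $G$-orbit precisely when they have equally many $+1$'s in $A$ and equally many in $A^{c}$, i.e.\ precisely when they agree on the pair $(U_{\sigma_0},V_{\sigma_0})$; hence the $G$-orbits are exactly the fibers of $\Phi(\sigma):=(U_{\sigma_0}(\sigma),V_{\sigma_0}(\sigma))$. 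Crucially, $\sigma_0$ is fixed by $G$, since it is constant ($+1$) on $A$ and constant ($-1$) on $A^{c}$; and on the complete graph both the Glauber generator $Q$ and the measure $\pi_n$ depend on a configuration only through its magnetization $\sum_i\sigma_i$, so they are invariant under all of $\mathrm{Sym}([n])$, in particular under $G$.

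Next I would upgrade the statement "$(U_t,V_t)$ is a deterministic function of $X_t$" to "$(U_t,V_t)$ is itself a Markov chain whose heat kernel depends only on $(U_0,V_0)$", via the Dynkin (Kemeny--Snell) lumpability criterion: for any two fibers $F,F'$, the total $Q$-rate from a state $\sigma\in F$ into $F'$ does not depend on the choice of $\sigma\in F$. This is immediate from the $G$-equivariance of $Q$ together with the transitivity of $G$ on each fiber; concretely, from a state with coordinates $(U,V)$ the aggregate rate of clock rings at the $+1$-sites of $A$ equals $(|A|+U)/2$ and each such ring flips its spin with a probability depending only on $U+V$, and the analogous statement holds for $-1$-sites of $A$ and for the sites of $A^{c}$, so all transition rates between fibers are functions of $(U,V)$. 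Consequently $H_t^{(U,V)}((|A|,-n+|A|),\cdot)=\Phi_*H_t^{(X)}(\sigma_0,\cdot)$, and $\pi_{U,V}=\Phi_*\pi_n$ because the $\Phi$-pushforward of a stationary measure is stationary for the (irreducible, hence with a unique stationary distribution) lumped chain.

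Then I would symmetrize and collapse. Since $\sigma_0$ is $G$-fixed and $Q$ is $G$-equivariant, $H_t^{(X)}(\sigma_0,g\sigma)=H_t^{(X)}(g^{-1}\sigma_0,\sigma)=H_t^{(X)}(\sigma_0,\sigma)$ for every $g\in G$, so $H_t^{(X)}(\sigma_0,\cdot)$ is constant on each fiber of $\Phi$; the same holds for $\pi_n$. Hence on a fiber $F=\Phi^{-1}(u,v)$ the signed measure $H_t^{(X)}(\sigma_0,\cdot)-\pi_n$ is \emph{constant}, equal to $\big(H_t^{(U,V)}((|A|,-n+|A|),(u,v))-\pi_{U,V}(u,v)\big)/|F|$. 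Plugging this into $\lVert\mu-\nu\rVert_\text{TV}=\SL{\sigma}(\mu(\sigma)-\nu(\sigma))^{+}$ and grouping the sum fiber by fiber gives
$$\big\lVert H_t^{(X)}(\sigma_0,\cdot)-\pi_n\big\rVert_\text{TV}=\SL{(u,v)}\big(H_t^{(U,V)}((|A|,-n+|A|),(u,v))-\pi_{U,V}(u,v)\big)^{+},$$
and the right-hand side is, by the same $(\cdot)^{+}$ identity, exactly $\lVert H_t^{(U,V)}((|A|,-n+|A|),\cdot)-\pi_{U,V}\rVert_\text{TV}$, which is the claim.

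The only step demanding genuine (if routine) care is the lumpability verification --- confirming the Dynkin criterion for $Q$ against the $\Phi$-partition; everything else is bookkeeping with the $G$-action. It is worth stressing that the argument relies essentially on $\sigma_0$ being $G$-fixed, which is precisely why one tracks $(U,V)$ relative to $\sigma_0$ rather than relative to some other reference configuration.
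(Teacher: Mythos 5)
Your proposal is correct and takes essentially the same route as the paper: exploit the $\mathrm{Sym}(A)\times\mathrm{Sym}(A^c)$-symmetry fixing $\sigma_0$ to conclude that $H_t^{(X)}(\sigma_0,\cdot)-\pi_n$ is constant on each fiber $K_{u,v}$, then collapse the sum in the total-variation formula fiber by fiber. You simply make explicit the group action and the Dynkin lumpability check that the paper's one-line appeal to "symmetry between vertices in $A$ and $A^c$" leaves implicit (the paper in fact re-derives the $(U_t,V_t)$ transition rates right after the lemma, which serves the same purpose).
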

\begin{proof}
For any $u,v$, let $K_{u,v}:=\{\sigma:(U(\sigma),V(\sigma))=(u,v)\}$.\ Due to the symmetry between vertices in $A$ and $A^c$, $H_t^{(X)}(\sigma_0,\sigma)-\pi_X(\sigma)$ is constant over all $\sigma\in K_{u,v}$, for any $u,v$. Hence we get that:
\begin{align*}
\SL{\sigma\in K_{u,v}}\left|H_t^{(X)}(\sigma_0,\sigma)-\pi_X(\sigma)\right|&=\left|\SL{\sigma\in K_{u,v}}H_t^{(X)}(\sigma_0,\sigma)-\pi_X(\sigma)\right|\\&=\left|H_t^{(U,V)}\left((|A|,-n+|A|),(u,v)\right)-\pi_{U,V}(u,v)\right|.
\end{align*}
Summing over all $u,v$ and using (\ref{d_TV}) proves our claim.
\end{proof}
\subsection*{Acknowledgments}
The second author would like to thank Allan Sly for useful discussions on this problem, as well as other related topics.
\section{Analysis of the two-coordinate chain}
The goal of this section is to establish basic properties for the two-coordinate chain. At first, it is important to note that since we are working on the complete graph, for any configuration $\sigma\in\{-1,1\}^n$,
$$\SL{x\sim y}\sigma_x\sigma_y=\dfrac{m(\sigma)^2-n}{2}\ \Rightarrow\mu_{\beta/n}(\sigma)\varpropto\exp\left(\dfrac{\beta}{2n}\cdot m(\sigma)^2\right),$$
where $m(\sigma):=\SL{i\in[n]}\sigma(i)$. Therefore, one can check that the chain $(U_t,V_t)$ has state space 
$$\mathcal{T}_{n,A}=\left\{-|A|,-|A|+2,\dots,|A|\right\}\times\left\{-n+|A|,-n+|A|+2,\dots,n-|A|\right\}$$ 
and transition rates equal to 
\begin{align*} 
& q((u,v), (u+2,v)) 
= \dfrac{|A| - u}{4} \cdot \left(1 + \tanh\left(\frac{\beta}{n} \cdot (u+v+1)\right)\right)  \\
& q((u,v), (u-2,v)) 
= \dfrac{|A| + u}{4} \cdot \left(1 - \tanh\left(\frac{\beta}{n} \cdot (u+v-1)\right)\right)
\\
& q((u,v), (u,v+2)) 
= \dfrac{n-|A| - v}{4} \cdot \left(1 + \tanh\left(\frac{\beta}{n} \cdot (u+v+1)\right)\right)  \\
& q((u,v), (u,v-2)) 
= \dfrac{n-|A| + v}{4} \cdot \left(1 - \tanh\left(\frac{\beta}{n} \cdot (u+v-1)\right)\right).
\end{align*}
It will be important that
\begin{equation}\label{diff-of-q-1}
q((u,v),(u+2,v))-q((u,v),(u-2,v))=-\dfrac{u}{2}+\dfrac{|A|}{2}\tanh\left(\dfrac{\beta}{n}(u+v)\right)+O(1)
\end{equation}
\begin{equation}\label{diff-of-q-2}
q((u,v),(u,v+2))-q((u,v),(u,v-2))=-\dfrac{v}{2}+\dfrac{n-|A|}{2}\tanh\left(\dfrac{\beta}{n}(u+v)\right)+O(1).
\end{equation}
Throughout this section, in order to prove our estimates, we will use the following well-known proposition for continuous-time Markov chains.
\begin{prop} \label{make-it-diff}
Let $(W_t)_{t\ge0}$ be a continuous-time Markov chain with state space $\mathcal{W}$ and transition rate matrix $Q$. Then, for any function $f:\mathcal{W}\to\R$
$$\dfrac{\text{d}}{\text{d}t}\E(f(W_t))=\E((Qf)(W_t)),$$
where of course $(Qf)(x)=\SL{y}q(x,y)f(y)$.
\end{prop}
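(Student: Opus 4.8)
The plan is to derive this from the semigroup representation of the chain, which is the cleanest route since in all of our applications the state space $\mathcal{W}$ — concretely $\mathcal{T}_{n,A}$ — is finite. Let $\mu_0$ denote the law of $W_0$ and, for $t\ge0$, write $\mu_t$ for the law of $W_t$. By the Markov property the heat kernel satisfies the Kolmogorov equations, so that $H_t=e^{tQ}$ (a genuine matrix exponential, since $\mathcal{W}$ is finite) and $\mu_t=\mu_0H_t$. Identifying $f$ with the column vector $(f(x))_{x\in\mathcal{W}}$, we therefore have
$$\E(f(W_t))=\sum_{x\in\mathcal{W}}\mu_t(x)f(x)=\mu_0H_tf=\mu_0e^{tQ}f.$$

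Now I would simply differentiate in $t$. Since $\mathcal{W}$ is finite, $t\mapsto e^{tQ}$ is smooth with $\tfrac{\mathrm{d}}{\mathrm{d}t}e^{tQ}=Qe^{tQ}=e^{tQ}Q$, the two forms being equal because $Q$ commutes with its own exponential. Hence
$$\frac{\mathrm{d}}{\mathrm{d}t}\E(f(W_t))=\mu_0\,\frac{\mathrm{d}}{\mathrm{d}t}\bigl(e^{tQ}f\bigr)=\mu_0\,e^{tQ}Qf=\mu_0 H_t(Qf)=\E\bigl((Qf)(W_t)\bigr),$$
which is exactly the claim. Equivalently, and more probabilistically, one can write the increment as $\E(f(W_{t+h}))-\E(f(W_t))=\E\bigl((H_hf-f)(W_t)\bigr)$ by conditioning on $W_t$, use the expansion $H_h=I+hQ+O(h^2)$ to see that $h^{-1}(H_hf-f)\to Qf$ uniformly on $\mathcal{W}$, and pass to the limit $h\to0$ inside the (finite) expectation; treating $[t-h,t]$ the same way gives the two-sided derivative.

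There is no real obstacle here: the statement is classical, and the only step that formally needs a word of justification is the exchange of the limit with the expectation, which is immediate from finiteness of $\mathcal{W}$ (equivalently, from the uniform convergence $h^{-1}(H_hf-f)\to Qf$). Were one to state the proposition for a countably infinite $\mathcal{W}$, one would need to impose boundedness on $f$ and on the total exit rates $\sum_{y}|q(x,y)|$ to make the same argument go through; but since we will only ever apply Proposition~\ref{make-it-diff} to the finite two-coordinate chains $(U_t,V_t)$, this refinement is not needed.
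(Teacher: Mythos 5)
The paper states Proposition~\ref{make-it-diff} as a well-known fact and offers no proof of its own, so there is nothing to compare against; your argument is correct and complete. The matrix-exponential route $\mu_0 e^{tQ}f \mapsto \mu_0 e^{tQ}Qf$ is exactly the standard justification on a finite state space, and your remark that the interchange of limit and expectation is the only step needing a word (trivially handled by finiteness, or equivalently by uniform convergence of $h^{-1}(H_h f - f)\to Qf$) is the right thing to flag. The closing caveat about what extra hypotheses would be needed for countable $\mathcal{W}$ is accurate but not required here, since the chains $(Y_t)$ and $(U_t,V_t)$ to which the proposition is applied all live on finite state spaces.
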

\subsection{Magnetization chain}
We call the Markov chain $(Y_t)_{t\ge0}$ defined as $Y_t=m(X_t)$ the \textit{magnetization chain}. Working similarly as before, we find that $(Y_t)_{t\ge0}$ has state space $\mathcal{Y}_n=\{-n,-n+2,\dots,n-2,n\}$ and transition rates equal to
$$q_n(k,k+2)=\dfrac{n-k}{4}\cdot\left(1+\tanh\left(\dfrac{\beta}{n}\cdot(k+1)\right)\right)=\dfrac{n-k}{4}\cdot\left(1+\tanh\left(\dfrac{\beta}{n}\cdot k\right)\right)+O(1)$$
$$q_n(k,k-2)=\dfrac{n+k}{4}\cdot\left(1-\tanh\left(\dfrac{\beta}{n}\cdot(k-1)\right)\right)=\dfrac{n+k}{4}\cdot\left(1-\tanh\left(\dfrac{\beta}{n}\cdot k\right)\right)+O(1).$$
In this subsection, our goal is to analyze the chain $(Y_t)_{t\ge0}$.\\\\
Let $\mu=1-\beta>0$ and $f_n:\mathcal{Y}_n\to[0,1]$ defined to be
$$f_n(k)=\sgn(k)\cdot\exp\left[-\int_{\frac{|k|}{n}}^1\dfrac{\mu}{x-\tanh(\beta x)}\ \text{d}x\right].$$
\begin{lem} There exists some functions $R_n^{(1)},\ R_n^{(2)}:\mathcal{Y}_n\to\R$ such that for every $k\in\mathcal{Y}_n$,
$$Q_nf_n(k)=-\mu f_n(k)+R_n^{(1)}(k)$$
and 
$$Q_nf_n^2(k)=-2\mu f_n^2(k)+R_n^{(2)}(k)$$
with $\left|R_n^{(1)}(k)\right|\le c/n$ and $\left|R_n^{(2)}(k)\right|\le c/n$, where $c>0$ is an absolute constant.
\end{lem}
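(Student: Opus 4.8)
The plan is to compute $Q_n f_n(k)$ directly using the two explicit transition rates and a Taylor expansion of $f_n$ around $k$, then identify the leading term as $-\mu f_n(k)$ and bound everything else by $c/n$. Write $h = 2/n$ so that $k/n$ moves by $\pm h$ in each step, and set $g(x) = \operatorname{sgn}(x)\exp[-\int_{|x|}^1 \mu/(y-\tanh(\beta y))\,dy]$ so that $f_n(k) = g(k/n)$. Then
\begin{align*}
Q_n f_n(k) &= q_n(k,k+2)\bigl(g(k/n + h) - g(k/n)\bigr) + q_n(k,k-2)\bigl(g(k/n - h) - g(k/n)\bigr).
\end{align*}
Taylor expanding to second order, $g(x\pm h) - g(x) = \pm h g'(x) + \tfrac{h^2}{2} g''(x) + O(h^3 \sup|g'''|)$, the first-order contribution is $h g'(k/n)\bigl(q_n(k,k+2) - q_n(k,k-2)\bigr)$ and the second-order contribution is $\tfrac{h^2}{2} g''(k/n)\bigl(q_n(k,k+2) + q_n(k,k-2)\bigr)$. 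Using the analogues of \eqref{diff-of-q-1}--\eqref{diff-of-q-2} for the magnetization chain (i.e. $q_n(k,k+2) - q_n(k,k-2) = -\tfrac{k}{2} + \tfrac{n}{2}\tanh(\tfrac{\beta}{n}k) + O(1)$ and $q_n(k,k+2) + q_n(k,k-2) = \tfrac{n}{2} + O(\text{stuff})$, all of size $O(n)$), the first-order term becomes $\tfrac{2}{n} g'(k/n)\cdot\tfrac{n}{2}\bigl(\tanh(\beta k/n) - k/n\bigr) + O(1/n) = -g'(k/n)\bigl(k/n - \tanh(\beta k/n)\bigr) + O(1/n)$, while the second-order term is $O(h^2 \cdot n) = O(1/n)$.

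The key point is then the defining ODE for $g$: differentiating $\log|g(x)| = -\int_{|x|}^1 \mu/(y - \tanh(\beta y))\,dy$ gives, for $x>0$, $g'(x) = g(x)\cdot \mu/(x - \tanh(\beta x))$, hence $-g'(x)(x - \tanh(\beta x)) = -\mu g(x)$; by the sign symmetry the same identity holds for $x<0$. Therefore the first-order term is exactly $-\mu f_n(k) + O(1/n)$, and collecting the $O(1/n)$ error from the first-order term together with the $O(1/n)$ second- and third-order Taylor remainders yields $Q_n f_n(k) = -\mu f_n(k) + R_n^{(1)}(k)$ with $|R_n^{(1)}(k)| \le c/n$. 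One subtlety to address: the integrand $\mu/(x - \tanh(\beta x))$ is singular at $x=0$ since $x - \tanh(\beta x) \sim (1-\beta)x$ there, so $g'$ blows up like $1/x$ near the origin; however $g(x)$ itself vanishes like a power of $|x|$ (in fact $g(x) \asymp |x|$, since $\mu/(x-\tanh\beta x) \to 1/x$ as $x\to 0$), so the product $g'(x)(x - \tanh\beta x) = \mu g(x)$ stays bounded by $1$, and uniformity of all Taylor remainders must be checked carefully near $k = 0$ — I would handle the lattice points $k \in \{-2,0,2\}$ separately by a direct estimate, using $|f_n| \le 1$ and $|f_n(k\pm 2) - f_n(k)| = O(1/n)$ together with $q_n = O(n)$, which already gives $|Q_n f_n(k)| = O(1/n)$ trivially there. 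For $|k| \ge 4$ the derivative bounds $|g'|, |g''|, |g'''|$ on the relevant interval are $O(n/|k|), O(n^2/k^2), O(n^3/|k|^3)$ respectively after accounting for the $g$ prefactor, and one checks these combine with the rate sizes to stay $O(1/n)$ uniformly.

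For the second identity, apply the same scheme to $f_n^2$, i.e. to the function $g^2$. The first-order term is $\tfrac{2}{n}(g^2)'(k/n)\cdot\tfrac{n}{2}(\tanh(\beta k/n) - k/n) + O(1/n) = -2 g(k/n) g'(k/n)(k/n - \tanh(\beta k/n)) + O(1/n)$, and by the ODE $g'(x)(x - \tanh\beta x) = \mu g(x)$ this equals $-2\mu g(k/n)^2 + O(1/n) = -2\mu f_n^2(k) + O(1/n)$; the second- and third-order Taylor remainders for $g^2$ are again $O(1/n)$ by the same derivative bounds (note $g$ is bounded, so products like $gg''$ and $(g')^2$ inherit the bounds from $g', g''$). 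This gives $Q_n f_n^2(k) = -2\mu f_n^2(k) + R_n^{(2)}(k)$ with $|R_n^{(2)}(k)| \le c/n$. The main obstacle, as indicated, is making the Taylor-remainder bounds genuinely uniform in $k$ across the singularity at the origin; everything else is bookkeeping with the explicit rates and the first-order ODE satisfied by $f_n$.
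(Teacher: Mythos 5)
Your overall strategy (Taylor expand $f_n$ to use the first-order ODE $g'(x)(x-\tanh\beta x)=\mu g(x)$) is the same as the paper's, and the reduction of the second identity to the first via $x^2-y^2 = 2y(x-y)+(x-y)^2$ is also essentially the paper's move. However, your handling of the supposed singularity at the origin has genuine gaps, and in fact it misses what is arguably the main point of the paper's proof.

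First, $g'$ does \emph{not} blow up near the origin. You write that $\mu/(x-\tanh\beta x)$ is singular like $1/x$ and therefore "$g'$ blows up like $1/x$", but since $g(x)\asymp |x|$ the product $g'(x) = \mu g(x)/(x-\tanh\beta x)$ tends to a finite nonzero limit as $x\to 0$. The paper's first and crucial observation is exactly this: using the identity $\int_x^1 \mu/(t-\tanh\beta t)\,\mathrm dt - \int_x^1 \mathrm dt/t = -\int_x^1 (\beta t - \tanh\beta t)/t^2/(1-\tanh\beta t/t)\,\mathrm dt$, whose integrand is bounded, one gets $\lim_{x\to 0^+} g(x)/x = e^{-c(\beta)}$, hence $g'(0) = e^{-c(\beta)}\in(0,\infty)$ and $g\in C^1[-1,1]$. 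Then, differentiating the ODE gives an explicit $g''$ which tends to $0$ as $x\to 0$, so $g''$ is globally bounded. With $g''$ bounded, the first-order Taylor formula with integral remainder $f_n(k\pm2) - f_n(k) = \pm\frac{2}{n}g'(k/n) + O(1/n^2)$ holds \emph{uniformly}, including at $k\in\{-2,0,2\}$ — no casework, no third derivative, no singularity to dance around. Your stated derivative bounds $|g'|=O(n/|k|)$, $|g''|=O(n^2/k^2)$ etc.\ are incorrect (they ignore the cancelling $g$-factor you yourself mention), and the paper's proof does not need $g'''$ at all.

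Second, your proposed direct estimate for the small-$k$ lattice points does not give what you claim. You say $|f_n(k\pm 2)-f_n(k)|=O(1/n)$ and $q_n = O(n)$ "already gives $|Q_nf_n(k)|=O(1/n)$ trivially"; in fact multiplying $O(n)\cdot O(1/n)$ per transition gives only $O(1)$. To get down to $O(1/n)$ one genuinely needs the cancellation between the $+$ and $-$ moves, i.e.\ the first-order Taylor term $\frac{2}{n}g'(k/n)(q_n(k,k+2)-q_n(k,k-2))$, with the residual controlled by a uniform $O(1/n^2)$ remainder — which in turn requires global boundedness of $g''$. So your fallback for $|k|\le 2$ does not close the gap; the paper's analytic preliminaries ($g'(0)$ finite, $g''$ bounded) are not a technicality you can route around but the substantive content of the lemma.

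A smaller point: you should also state the analogues of the transition-rate differences $q_n(k,k+2)-q_n(k,k-2) = -\frac{k}{2}+\frac{n}{2}\tanh(\frac{\beta}{n}k)+O(1)$, which you gesture at; these are fine and mirror the paper, but note the $O(1)$ error term contributes, after multiplying by $\frac{2}{n}g'(k/n)$, another $O(1/n)$ which is absorbed into $R_n^{(1)}$.
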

\begin{proof}
Let $g:[-1,1]\to\R$ be the odd function such that
$$g(x)=\exp\left(-\int_x^1\dfrac{\mu}{t-\tanh(\beta t)}\ \text{d}t\right),\ \text{for} \ x>0.$$
We claim that $g$ is differentiable in $[-1,1]$, twice differentiable in $[-1,1]\setminus\{0\}$ and that $g''$ is bounded. Differentiability is obvious at every point except 0, so we focus on that. First, for $x>0$,
\begin{equation}\label{g-int}
\int_x^1\dfrac{\mu}{t-\tanh(\beta t)}\ \text{d}t\ -\int_x^1\dfrac{1}{t}\ \text{d}t\ =\ -\int_x^1\left(\dfrac{\dfrac{\beta t - \tanh(\beta t)}{t^2}}{1 - \dfrac{\tanh(\beta t)}{t}}\right)\ \text{d}t
\end{equation}
and observe that the integrand is $> 0$ and bounded. So, due to (\ref{c(beta)}),
\begin{align*}
&\lim\limits_{x\to0^+}\left( \int_x^1\dfrac{\mu}{t-\tanh(\beta t)}\ \text{d}t\ -\int_x^1\dfrac{1}{t}\ \text{d}t\right) =\ c(\beta)\\ \Rightarrow\ &
\lim\limits_{x\to0^+}\ \dfrac{1}{x}\cdot\exp\left(-\int_x^1\dfrac{\mu}{t - \tanh(\beta t)} \text{d}t\right) = e^{-c(\beta)}\\ \Rightarrow\ &
\lim\limits_{x\to 0^{+}}\ \dfrac{g(x)}{x}\ =\ e^{-c(\beta)}.
\end{align*}
Because $g$ is odd we conclude that $$g'(0)\ =\ e^{-c(\beta)}\in\R.$$
Moreover, for $x>0$,
$$(x - \tanh(\beta x))\cdot g'(x)\ = \ \mu\cdot g(x),$$
and by differentiating,
$$g''(x) = \dfrac{\mu\cdot g(x)}{(x - \tanh(\beta x))^2}\cdot\left(\mu - \dfrac{\cosh(\beta x)^2 - \beta}{\cosh(\beta x)^2}\right)=\dfrac{-\beta(1-\beta)\cdot g(x)}{(x - \tanh(\beta x))^2}\cdot\left(1 -\dfrac{1}{\cosh(\beta x)^2}\right).$$
Since $\lim\limits_{x\to0}\ \dfrac{1}{x^2}\left(1-\dfrac{1}{\cosh(\beta x)^2}\right)\ \in\ (0,\infty) $ and $g(0) = 0$, we conclude that 
$$\lim\limits_{x\to0^+}g''(x)=0.$$
Again using the fact that $g$ is odd we conclude that $g''$ is bounded.
This concludes the proof of our initial claim.\\\\
We can now write $f_n(k) = g(\frac{k}{n})$. Then,
\begin{align*}
f_n(k+2)=g\left(\dfrac{k+2}{n}\right)&=g\left(\dfrac{k}{n}\right)+\dfrac{2}{n}\cdot g'\left(\dfrac{k}{n}\right)+\int_{\frac{k}{n}}^{\frac{k+2}{n}}\left(\dfrac{k+2}{n}-t\right)\cdot g''(t)\ \text{d}t
\\&=f_n(k)+\dfrac{2}{n}\cdot g'\left(\dfrac{k}{n}\right) + O\left(\dfrac{1}{n^2}\right).
\end{align*}
Similarly we have: 
$$f_n(k-2)-f_n(k)=-\dfrac{2}{n}\cdot g'\left(\dfrac{k}{n}\right)+O\left(\dfrac{1}{n^2}\right).$$
Combining these two equalities implies that
\begin{align*}
Q_nf_n(k)&=q_n(k,k+2)\cdot (f_n(k+2)-f_n(k))+q_n(k,k-2)\cdot(f_n(k-2)-f_n(k))\\ &=
\dfrac{2}{n}\cdot g'\left(\dfrac{k}{n}\right)\cdot(q_n(k,k+2)-q_n(k,k-2))+O\left(\dfrac{1}{n}\right)\\&=
g'\left(\dfrac{k}{n}\right)\cdot\left[\left(1-\dfrac{k}{n}\right)\cdot\dfrac{1+\tanh\left(\beta\cdot\frac{k}{n}\right)}{2}-\left(1+\dfrac{k}{n}\right)\cdot\dfrac{1-\tanh(\beta\cdot\frac{k}{n})}{2}\right]+O\left(\dfrac{1}{n}\right)\\&
=-g'\left(\dfrac{k}{n}\right)\cdot\left[\dfrac{k}{n}-\tanh\left(\beta\cdot\dfrac{k}{n}\right)\right]+O\left(\dfrac{1}{n}\right)\\&=-\mu f_n(k)+O\left(\dfrac{1}{n}\right).
\end{align*}
The identity $x^2-y^2=2y(x-y)+(x-y)^2$ and the fact that $f_n(k+2)-f_n(k)=O(n^{-1})$ imply that
\begin{align*}
Q_nf_n^2(k)&\ =q_n(k,k+2)\cdot(f_n(k+2)^2-f_n(k)^2)+q_n(k,k-2)\cdot(f_n(k-2)^2-f_n(k)^2)\\ &\ =2f_n(k)\cdot Q_nf_n(k)+O\left(\dfrac{1}{n}\right)\\ &\ =-2\mu f_n^2(k)+O\left(\dfrac{1}{n}\right),
\end{align*}
which is what we wanted.
\end{proof}
\begin{lem}\label{1st-2nd-mom}
For $t_{n,-C}=(2(1-\beta))^{-1}\log(n)-C$ and $Y_0=2|A|-n$, we have
\begin{align*}
\E(f_n(Y_{t_{n,-C}}))&=\dfrac{e^{\mu C}}{\sqrt{n}}\cdot f_n(2|A|-n)+O\left(\dfrac{1}{n}\right) \ \ \text{and}\\  \E(f_n(Y_{t_{n,-C}})^2)&=\dfrac{e^{2\mu C}}{n}\cdot f_n(2|A|-n)^2 + O\left(\dfrac{1}{n}\right).
\end{align*}
\end{lem}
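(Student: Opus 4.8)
The plan is to combine the previous lemma with Proposition~\ref{make-it-diff} (the Dynkin/Kolmogorov identity) to set up a closed linear ODE for the expectations, and then just solve it. Writing $\phi(t) := \E(f_n(Y_t))$, the previous lemma gives $Q_n f_n(k) = -\mu f_n(k) + R_n^{(1)}(k)$ with $\lVert R_n^{(1)}\rVert_\infty \le c/n$, so Proposition~\ref{make-it-diff} yields $\phi'(t) = \E((Q_nf_n)(Y_t)) = -\mu\,\phi(t) + \E(R_n^{(1)}(Y_t))$, where the last term is bounded by $c/n$ in absolute value uniformly in $t$. This is a linear ODE of the form $\phi' = -\mu\phi + \rho(t)$ with $|\rho(t)|\le c/n$; by the integrating-factor formula, $\phi(t) = e^{-\mu t}\phi(0) + \int_0^t e^{-\mu(t-s)}\rho(s)\,ds$, and the integral term is bounded in absolute value by $\tfrac{c}{n}\int_0^t e^{-\mu(t-s)}\,ds \le \tfrac{c}{\mu n}$. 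Since $\phi(0) = f_n(2|A|-n)$ (the chain starts deterministically at $Y_0 = 2|A|-n$), we get $\E(f_n(Y_t)) = e^{-\mu t} f_n(2|A|-n) + O(1/n)$ uniformly in $t \ge 0$.

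Now plug in $t = t_{n,-C} = (2\mu)^{-1}\log n - C$ (recall $\mu = 1-\beta$, so $2(1-\beta) = 2\mu$). Then $e^{-\mu t_{n,-C}} = e^{-\mu((2\mu)^{-1}\log n - C)} = e^{-\frac12 \log n}\,e^{\mu C} = e^{\mu C}/\sqrt n$, which is exactly the claimed leading coefficient. The error term $O(1/n)$ is already of the stated form, so the first displayed identity follows. For the second identity I would run the identical argument with $f_n^2$ in place of $f_n$: the previous lemma supplies $Q_n f_n^2(k) = -2\mu f_n^2(k) + R_n^{(2)}(k)$ with $\lVert R_n^{(2)}\rVert_\infty \le c/n$, so $\psi(t) := \E(f_n(Y_t)^2)$ satisfies $\psi' = -2\mu\psi + O(1/n)$, whence $\psi(t) = e^{-2\mu t}\psi(0) + O(1/n) = e^{-2\mu t} f_n(2|A|-n)^2 + O(1/n)$. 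Evaluating at $t_{n,-C}$ gives $e^{-2\mu t_{n,-C}} = e^{2\mu C}/n$, which is the claimed coefficient, completing the proof.

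The argument is essentially routine once the previous lemma is in hand, so there is no serious obstacle; the only points requiring a little care are bookkeeping ones. One should check that Proposition~\ref{make-it-diff} genuinely applies — this is immediate since $\mathcal{Y}_n$ is finite and $f_n$ is bounded (indeed $f_n$ takes values in $[-1,1]$), so all sums are finite and differentiation under the expectation is legitimate. One should also note that the $O(1/n)$ bounds coming from the integrating factor are uniform in $t \ge 0$ because $\mu = 1-\beta > 0$ makes $\int_0^t e^{-\mu(t-s)}\,ds = (1-e^{-\mu t})/\mu \le 1/\mu$ bounded independently of $t$; this is where the high-temperature assumption $\beta < 1$ enters. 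Finally, since $|f_n| \le 1$, the terms $f_n(2|A|-n)$ and $f_n(2|A|-n)^2$ are $O(1)$, so the stated error $O(1/n)$ absorbs everything, and no dependence on $C$ hides in the error (for fixed $C$).
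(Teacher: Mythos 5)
Your proof is correct and takes essentially the same route as the paper: apply Proposition~\ref{make-it-diff} with the generator estimates from the preceding lemma to get a linear ODE for $\E(f_n(Y_t))$ and $\E(f_n(Y_t)^2)$, solve via integrating factor, and evaluate at $t_{n,-C}$. The paper writes the solution as $e^{-\mu t}\bigl(f_n(Y_0)+\int_0^t e^{\mu u}\E(R_n^{(1)}(Y_u))\,\mathrm{d}u\bigr)$ and bounds the inner integral, which after multiplying through by $e^{-\mu t_{n,-C}}$ gives the same $O(1/n)$ you obtain directly from the convolution form; your observation that the error is uniform in $t$ is a slightly cleaner way to state the same bound.
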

\begin{proof}
\color{red}
\color{black}
Set $F_n^{(1)}(t)=\E(f_n(Y_t))$ and apply Proposition \ref{make-it-diff} to get
\begin{align*}
\left(F_n^{(1)}(t)\right)'&=\dfrac{\text{d}}{\text{d}t}\E\left(f_n(Y_t)\right)=\E(Q_nf_n(Y_t))=-\mu\E(f_n(Y_t))+\E\left(R_n^{(1)}(Y_t)\right)\\&=-\mu F_n^{(1)}(t)+\E\left(R_n^{(1)}(Y_t)\right).
\end{align*}
Solving the differential equation, we find that for all $t\ge0$,
\begin{align}\label{first-mom}
\E(f_n(Y_t))=F_n^{(1)}(t)&=e^{-\mu t}\cdot\left(f_n(Y_0)+\int_0^te^{\mu u}\cdot\E\left(R_n^{(1)}(Y_u)\right)\ \text{d}u\right)
\\&=e^{-\mu t}\cdot\left(f_n(2|A|-n)+\int_0^t e^{\mu u}\cdot\E\left(R_n^{(1)}(Y_u)\right)\ \text{d}u\right).\notag
\end{align}
Similarly, if we set $F_n^{(2)}(t)=\E\left(f_n(Y_t)^2\right)$, we find that
\begin{align}\label{sec-mom}
\E\left(f_n(Y_t)^2\right)=F_n^{(2)}(t)&=e^{-2\mu t}\cdot\left(f_n(Y_0)^2+\int_0^t e^{2\mu u}\cdot \E\left(R_n^{(2)}(Y_u)\right)\ \text{d}u\right)
\\&=e^{-2\mu t}\cdot\left(f_n(2|A|-n)^2+\int_0^t e^{2\mu u}\cdot \E\left(R_n^{(2)}(Y_u)\right)\ \text{d}u\right).\notag
\end{align}
For $t=t_{n,-C}=(2(1-\beta))^{-1}\log(n)-C$ we observe that:
$$\left|\int_0^{t_{n,-C}} e^{\mu u}\cdot \E\left(R_n^{(1)}(Y_u)\right)\text{d}u\right|\le\left|\dfrac{c}{n}\cdot\int_0^{t_{n,-C}} e^{\mu u}\text{d}u\right|\le\left|\dfrac{c}{n}\cdot\dfrac{1}{\mu}\cdot e^{\mu t_{n,-C}}\right|=O\left(\dfrac{1}{\sqrt{n}\cdot e^{\mu C}}\right)$$\\
and
$$\left|\int_0^{t_{n,-C}} e^{2\mu u}\cdot \E\left(R_n^{(2)}(Y_u)\right)\text{d}u\right|\le\left|\dfrac{c}{n}\cdot\int_0^{t_{n,-C}} e^{2\mu u}\text{d}u\right|\le\left|\dfrac{c}{n}\cdot\dfrac{1}{2\mu}\cdot\dfrac{n}{e^{2\mu C}}\right|=O\left(e^{-2\mu C}\right)$$
Plugging in $t=t_{n,-C}$ we get
\begin{align*}
\E(f_n(Y_{t_{n,-C}}))&=\dfrac{e^{\mu C}}{\sqrt{n}}\cdot f_n(2|A|-n)+O\left(\dfrac{1}{n}\right) \ \ \text{and}\\  \E(f_n(Y_{t_{n,-C}})^2)&=\dfrac{e^{2\mu C}}{n}\cdot f_n(2|A|-n)^2 + O\left(\dfrac{1}{n}\right),
\end{align*}
which is what we wanted.
\end{proof}
\begin{lem}\label{conc-for-mag}
Suppose that the sequence $\sigma_0^{(n)}$ of the initial conditions satisfy the property that $|A_n|/n\to\lambda\in(1/2,1]$. Then, the constant $\alpha(\lambda,C):=e^{\mu C+c(\beta)}\cdot g(2\lambda-1)$ satisfies that for every $\varepsilon,\eta>0$, there exists $C>0$ such that if $n$ is large enough,
$$\PR\left[\left|\dfrac{Y_{t_{n,-C}}}{\sqrt{n}}-a(\lambda,C)\right|\le\varepsilon\cdot a(\lambda,C)\right]\ge1-\eta.$$
\end{lem}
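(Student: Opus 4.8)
The plan is to convert the first- and second-moment estimates of Lemma~\ref{1st-2nd-mom} into a concentration statement for $f_n(Y_{t_{n,-C}})$ via Chebyshev's inequality, and then to transfer that concentration to $Y_{t_{n,-C}}/\sqrt n$ by inverting the function $g$ near $0$. Write $t=t_{n,-C}$ and $g_n:=f_n(2|A_n|-n)=g\!\big(\tfrac{2|A_n|-n}{n}\big)$; since $|A_n|/n\to\lambda\in(1/2,1]$ and $g$ is continuous (by the analysis of $g$ above), $g_n\to g(2\lambda-1)>0$, so $\alpha(\lambda,C)=e^{\mu C+c(\beta)}g(2\lambda-1)>0$.

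First I would record, using Lemma~\ref{1st-2nd-mom}, that $\E(f_n(Y_t))=\tfrac{e^{\mu C}}{\sqrt n}g_n+O(1/n)$ and $\E(f_n(Y_t)^2)=\tfrac{e^{2\mu C}}{n}g_n^2+O(1/n)$, so that $\mathrm{Var}(f_n(Y_t))=O(1/n)$ while $\E(f_n(Y_t))$ is of order $e^{\mu C}/\sqrt n$. Chebyshev's inequality then gives, for any $\delta\in(0,1)$ and all large $n$,
$$\PR\big(\big|f_n(Y_t)-\E f_n(Y_t)\big|>\delta\,\E f_n(Y_t)\big)\ \le\ \frac{\mathrm{Var}(f_n(Y_t))}{\delta^2\big(\E f_n(Y_t)\big)^2}\ \le\ \frac{C_0}{\delta^2 e^{2\mu C}},$$
where $C_0$ depends only on $\beta$ and $\lambda$. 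This is precisely the step that uses the freedom to take $C$ large: the coefficient of variation of $f_n(Y_t)$ is of order $e^{-\mu C}$, so only for $C$ large are the fluctuations of $f_n(Y_t)$ negligible compared to its mean. Let $\mathcal{G}$ denote the complement of the event above; on $\mathcal{G}$ one has $f_n(Y_t)\in[(1-\delta),(1+\delta)]\,\E f_n(Y_t)$, and in particular $f_n(Y_t)>0$, hence $Y_t>0$.

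Next I would invert $g$. Recall (from the analysis of $g$ above) that $g\colon[-1,1]\to[-1,1]$ is odd and continuously differentiable with $g'(0)=e^{-c(\beta)}$ and $g'>0$ on $(0,1]$, hence a strictly increasing bijection; moreover $g(x)/x\to e^{-c(\beta)}$ as $x\to0^+$ yields $g^{-1}(y)/y\to e^{c(\beta)}$ as $y\to0^+$. Applying $g^{-1}$ to $f_n(Y_t)=g(Y_t/n)$ and multiplying by $\sqrt n$,
$$\frac{Y_t}{\sqrt n}\ =\ \sqrt n\,g^{-1}\!\big(f_n(Y_t)\big)\ =\ e^{c(\beta)}\big(1+o(1)\big)\cdot\sqrt n\,f_n(Y_t),$$
where the $o(1)$ is uniform on $\mathcal{G}$ since there $0<f_n(Y_t)\le(1+\delta)\,\E f_n(Y_t)=O(e^{\mu C}/\sqrt n)\to0$ for fixed $C$. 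On the other hand, on $\mathcal{G}$, $\sqrt n\,f_n(Y_t)\in[(1-\delta),(1+\delta)]\,\sqrt n\,\E f_n(Y_t)$ and $\sqrt n\,\E f_n(Y_t)=e^{\mu C}g_n+O(1/\sqrt n)\to e^{\mu C}g(2\lambda-1)$, so, on $\mathcal{G}$ and for $n$ large,
$$\frac{Y_t}{\sqrt n}\ \in\ \big[(1-\delta)^3,(1+\delta)^3\big]\cdot e^{c(\beta)+\mu C}g(2\lambda-1)\ =\ \big[(1-\delta)^3,(1+\delta)^3\big]\,\alpha(\lambda,C).$$

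To conclude, given $\varepsilon,\eta>0$ I would choose $\delta=\delta(\varepsilon)\in(0,1)$ with $(1+\delta)^3\le1+\varepsilon$ and $(1-\delta)^3\ge1-\varepsilon$, then $C=C(\varepsilon,\eta)$ with $C_0\delta^{-2}e^{-2\mu C}\le\eta$, and finally $n$ large enough that all of the displayed error terms are at most $\delta$; then $\PR(\mathcal{G})\ge1-\eta$ and on $\mathcal{G}$ we obtain $|Y_t/\sqrt n-\alpha(\lambda,C)|\le\varepsilon\,\alpha(\lambda,C)$, which is the assertion ($a(\lambda,C)$ in the statement being the same as $\alpha(\lambda,C)$). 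I do not expect a serious obstacle; the points requiring care are the order of the choices — in particular that $C$ must be taken large \emph{after} $\delta$, so that the $e^{\mu C}/\sqrt n$ scale of the mean dominates the $O(1/n)$ scale of the variance — and the clean propagation of multiplicative errors through the inversion of $g$ at $0$, which rests on $g$ being $C^1$ with $g'(0)=e^{-c(\beta)}\neq0$.
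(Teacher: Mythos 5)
Your proposal is correct and follows essentially the same route as the paper: apply Chebyshev to $f_n(Y_{t_{n,-C}})$ using the moment estimates of Lemma~\ref{1st-2nd-mom} (which, as you note, force $C$ to be chosen large so the coefficient of variation $O(e^{-\mu C})$ is small), and then transfer the resulting multiplicative concentration from $f_n(Y_t)=g(Y_t/n)$ to $Y_t/\sqrt n$ via the fact that $g(x)/x\to g'(0)=e^{-c(\beta)}$ as $x\to0$. The only cosmetic difference is that you phrase the transfer via the inverse $g^{-1}$ (noting $g$ is a strictly increasing odd bijection), whereas the paper sandwiches $Y_t/n$ between $f_n(Y_t)/(g'(0)\pm\varepsilon')$ directly; the order of quantifiers (first the multiplicative tolerance, then $C$, then $n$) is handled identically.
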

\begin{proof}
Lemma \ref{1st-2nd-mom} implies that if $n$ is large enough,
$$\dfrac{\E\left(f_n(Y_{t_{n,-C}})^2\right)}{\E\left(f_n(Y_{t_{n,-C}})\right)^2}=1+O\left(e^{-2\mu C}\right).$$\\
For $\varepsilon'>0$, consider the event $$A_{\varepsilon'} = \left\{\left|f_n(Y_{t_{n,-C}})-\E(f_n(Y_{t_n,-C}))\right|\le\varepsilon'\cdot\E\left(f_n(Y_{t_{n,-C}})\right)\right\}.$$
By applying Chebyshev inequality we get that for any $\varepsilon'>0$, $\PR[A_{\varepsilon'}]\ge1-O_{\varepsilon'}(e^{-2\mu C})$.\\ 
We now claim that for any $\varepsilon>0$, there exists $\varepsilon'>0$ so that if $n$ is large enough,
$$A_{\varepsilon'}\subseteq\left\{\left|\dfrac{Y_{t_{n,-C}}}{\sqrt{n}}-\alpha(\lambda,C)\right|\le\varepsilon\cdot\alpha(\lambda,C)\right\}.$$
Indeed, suppose $A_{\varepsilon'}$ holds. Then, $\frac{Y_{t_{n,-C}}}{n}=o_{n\to\infty}(1)$ so if $n$ is large enough,
\begin{align*}
&\dfrac{f_n(Y_{t_{n,-C}})}{g'(0)+\varepsilon'}<\dfrac{Y_{t_{n,-C}}}{n}<\dfrac{f_n(Y_{t_{n,-C}})}{g'(0)-\varepsilon'}\\ \Rightarrow\ &\dfrac{(1-\varepsilon')\cdot\E(f_n(Y_{t_{n,-C}}))}{g'(0)+\varepsilon'}<\dfrac{Y_{t_{n,-C}}}{n}<\dfrac{(1+\varepsilon')\cdot\E(f_n(Y_{t_{n,-C}}))}{g'(0)-\varepsilon'}
\\ \Rightarrow\ &\dfrac{1-2\varepsilon'}{e^{-c(\beta)}+\varepsilon'}\cdot e^{\mu C}\cdot g(2\lambda-1) <\dfrac{Y_{t_{n,-C}}}{\sqrt{n}}<\dfrac{1+2\varepsilon'}{e^{-c(\beta)}-\varepsilon'}\cdot e^{\mu C}\cdot g(2\lambda-1),
\end{align*}
since $g$ is continuous and $|A_n|/n\to\lambda$. The Lemma follows.
\end{proof}
\begin{lem}\label{mag-1/2}
Suppose that the sequence $\sigma_0^{(n)}$ of the initial conditions satisfy the property that $|A_n|/n\to1/2$. Then, for every $\eta>0$, there exists (large enough) $M_1>0$ so that for every $C>0$, if $n$ is large enough,
$$\PR\left[\left|\dfrac{Y_{t_{n,-C}}}{\sqrt{n}}\right|\le M_1\right]\ge1-\eta.$$
\end{lem}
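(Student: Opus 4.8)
The plan is to combine the second-moment bound of Lemma \ref{1st-2nd-mom} with the observation that, because $|A_n|/n\to1/2$, the initial value $f_n(2|A_n|-n)=g\bigl((2|A_n|-n)/n\bigr)$ converges to $g(0)=0$. Concretely, Lemma \ref{1st-2nd-mom} gives
$$\E\bigl(f_n(Y_{t_{n,-C}})^2\bigr)=\frac{e^{2\mu C}}{n}\,f_n(2|A_n|-n)^2+O\Bigl(\frac1n\Bigr),$$
and, as its proof shows, the error is at most $c_2/n$ for an \emph{absolute} constant $c_2$, uniformly in $C$. For each fixed $C>0$, once $n$ is large enough that $f_n(2|A_n|-n)^2\le e^{-2\mu C}$ (possible since this quantity tends to $0$), the main term is at most $1/n$, so $\E\bigl(f_n(Y_{t_{n,-C}})^2\bigr)\le C_0/n$ with $C_0:=1+c_2$ independent of $C$. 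The key point is that $C_0$ does not depend on $C$; only the threshold on $n$ does.

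Next I would transfer this bound on $f_n(Y_{t_{n,-C}})$ to a bound on $Y_{t_{n,-C}}/\sqrt n$. From $g'(x)=\mu g(x)/(x-\tanh(\beta x))>0$ on $(0,1)$, together with $g(0)=0$ and $g(1)=1$, the restriction $g|_{[0,1]}$ is a strictly increasing bijection onto $[0,1]$; since $g$ is odd and nonnegative on $[0,1]$, one has $|g(x)|=g(|x|)$ for $x\in[-1,1]$, strictly increasing in $|x|$. Hence, as long as $M_1/\sqrt n\le1$,
$$\Bigl\{\,\bigl|Y_{t_{n,-C}}/\sqrt n\bigr|\le M_1\,\Bigr\}=\Bigl\{\,\bigl|f_n(Y_{t_{n,-C}})\bigr|\le g(M_1/\sqrt n)\,\Bigr\},$$
since $|f_n(k)|=g(|k|/n)\le g(M_1/\sqrt n)$ is equivalent to $|k|/n\le M_1/\sqrt n$. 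Applying Markov's inequality to $f_n(Y_{t_{n,-C}})^2$,
$$\PR\Bigl(\bigl|Y_{t_{n,-C}}/\sqrt n\bigr|> M_1\Bigr)=\PR\Bigl(f_n(Y_{t_{n,-C}})^2> g(M_1/\sqrt n)^2\Bigr)\le\frac{\E\bigl(f_n(Y_{t_{n,-C}})^2\bigr)}{g(M_1/\sqrt n)^2}.$$

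Finally I would bound $g(M_1/\sqrt n)$ from below. Since $g(x)/x\to g'(0)=e^{-c(\beta)}>0$ as $x\to0^+$, for $n$ large (depending only on $M_1$) we get $g(M_1/\sqrt n)\ge\tfrac12 e^{-c(\beta)}M_1/\sqrt n$, hence $g(M_1/\sqrt n)^2\ge\tfrac14 e^{-2c(\beta)}M_1^2/n$. Combining with $\E\bigl(f_n(Y_{t_{n,-C}})^2\bigr)\le C_0/n$ gives
$$\PR\Bigl(\bigl|Y_{t_{n,-C}}/\sqrt n\bigr|> M_1\Bigr)\le\frac{4C_0 e^{2c(\beta)}}{M_1^2},$$
so it suffices to take $M_1\ge 2e^{c(\beta)}\sqrt{C_0/\eta}$, which depends only on $\eta$ (and $\beta$); for each $C>0$ one then lets $n$ be large enough that all the "$n$ large'' conditions above hold. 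The only genuinely subtle point — and the place where the hypothesis $|A_n|/n\to1/2$ is essential — is the order of quantifiers: $M_1$ must be fixed before $C$, so the $C$-dependent term $e^{2\mu C}f_n(2|A_n|-n)^2/n$ has to be absorbed, which works precisely because $f_n(2|A_n|-n)\to0$ makes it negligible for each fixed $C$. (This contrasts with Lemma \ref{conc-for-mag}, where $\lambda>1/2$ makes $Y_{t_{n,-C}}/\sqrt n$ concentrate near a constant that diverges as $C\to\infty$.)
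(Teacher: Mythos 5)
Your proposal is correct and takes essentially the same approach as the paper: apply the second-moment estimate of Lemma \ref{1st-2nd-mom}, note that $f_n(2|A_n|-n)\to0$ so that $\E(f_n(Y_{t_{n,-C}})^2)=O(1/n)$ uniformly once $n$ is large (for each fixed $C$), then Markov plus the local behavior of $g$ near $0$ to transfer the bound to $|Y_{t_{n,-C}}/\sqrt n|$. Your monotonicity argument for $g$ on $[0,1]$ is a slightly more explicit justification of the event-rewriting step that the paper leaves as "working in a similar way as in Lemma \ref{conc-for-mag}," but it is the same underlying argument.
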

\begin{proof}
We use Lemma \ref{1st-2nd-mom}, and more specifically, the second moment estimate. In this case, $f_n(2|A_n|-n)=g\left(2\frac{|A_n|}{n}-1\right)\to0$. Therefore, if $n$ is large enough, due to Markov's inequality,
$$\E\left(f_n(Y_{t_{n,-C}})^2\right)=O\left(\dfrac{1}{n}\right)\ \ \Rightarrow\ \ \PR\left[\left|g\left(\dfrac{Y_{t_{n,-C}}}{n}\right)\right|\cdot\sqrt{n}\ge M_1g'(0)/2\right]\le O\left(\dfrac{1}{M_1^2}\right).$$
Working in a similar way as in Lemma \ref{conc-for-mag}, we find that if $n$ is large enough,
$$\PR\left[\left|\dfrac{Y_{t_{n,-C}}}{\sqrt{n}}\right|\ge M_1\right]=O\left(\dfrac{1}{M_1^2}\right),$$
which is what we intended to prove.
\end{proof}
\subsection{First part of the process}
The goal in this subsection is to derive a concentration bound for the pair $(U_{t_{n,-C}},V_{t_{n,-C}})$ for initial conditions $\sigma_0^{(n)}$ such that $|A_n|/n\to\lambda\in[1/2,1)$. Consider $h_n : \mathcal{T}_{n,A}\to \R$ to be the function 
$$h_n(u,v)=\left(\dfrac{u}{|A|}-\dfrac{v}{n-|A|}\right)^2.$$
\begin{lem}\label{conc-h}
Suppose there exists some $\varepsilon_0>$ for which $|A|\in(\varepsilon_0n,(1-\varepsilon_0)n)$. Then,
$$\E(h_n(U_{t_{n,-C}},V_{t_{n,-C}}))=O\left(\dfrac{1}{n}\right).$$
\end{lem}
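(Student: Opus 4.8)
The strategy is to apply Proposition~\ref{make-it-diff} to the function $h_n$ and show that it satisfies an approximate differential inequality of the form $(G_n(t))' \le -c_1 G_n(t) + c_2/n$, where $G_n(t) = \E(h_n(U_t,V_t))$ and $c_1,c_2>0$ are constants. Solving this inequality, together with the bound $G_n(0) = h_n(|A|,-n+|A|) = 0$ (since at time $0$ we have $u/|A| = 1 = -v/(n-|A|)$... wait, actually $u = |A|$, $v = |A|-n$, so $u/|A| = 1$ and $v/(n-|A|) = (|A|-n)/(n-|A|) = -1$, giving $h_n(U_0,V_0) = 4$, not $0$; so we instead use that $G_n(0) = O(1)$), yields $G_n(t_{n,-C}) \le e^{-c_1 t_{n,-C}} \cdot O(1) + O(1/n)$. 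Since $t_{n,-C} = (2(1-\beta))^{-1}\log n - C \to \infty$, the first term is $o(1)$; to get it down to $O(1/n)$ one needs $c_1 \ge 2(1-\beta) = 2\mu$, so the key is to extract a contraction rate of \emph{exactly} (or at least) $2\mu$ from the drift of $h_n$.

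The main computation is thus $Q_n h_n(u,v)$. Writing $a := u/|A| - v/(n-|A|)$, one has $h_n = a^2$, and using the identity $x^2 - y^2 = 2y(x-y) + (x-y)^2$ as in the previous lemma, the dominant contribution to $Q_n h_n$ is $2a$ times the drift of $a$ itself. The drift of $u/|A|$ comes from $\frac{2}{|A|}\bigl(q((u,v),(u+2,v)) - q((u,v),(u-2,v))\bigr)$, which by~\eqref{diff-of-q-1} equals $\frac{2}{|A|}\bigl(-\frac{u}{2} + \frac{|A|}{2}\tanh(\frac{\beta}{n}(u+v)) + O(1)\bigr) = -\frac{u}{|A|} + \tanh(\frac{\beta}{n}(u+v)) + O(1/|A|)$; similarly the drift of $v/(n-|A|)$ is $-\frac{v}{n-|A|} + \tanh(\frac{\beta}{n}(u+v)) + O(1/(n-|A|))$ by~\eqref{diff-of-q-2}. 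The crucial cancellation is that the $\tanh$ terms are \emph{identical}, so the drift of $a$ is simply $-a + O(1/n)$ (using $|A|, n-|A| \asymp n$). Hence the leading term of $Q_n h_n$ is $2a \cdot (-a) = -2 h_n$, and the quadratic-correction term $(x-y)^2$ contributes, after multiplying by the transition rates (which are $O(n)$) and noting each increment of $u/|A|$ or $v/(n-|A|)$ is $O(1/n)$, a term of order $O(n \cdot 1/n^2) = O(1/n)$. This gives exactly $Q_n h_n(u,v) = -2 h_n(u,v) + O(1/n)$, i.e. $c_1 = 2 > 2\mu$ since $\mu = 1-\beta \in (0,1]$—comfortably enough. (If one only needs $c_1 > 2\mu$, there is slack; but in fact $c_1 = 2$ cleanly.)

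With the pointwise bound $Q_n h_n = -2 h_n + O(1/n)$ in hand, Proposition~\ref{make-it-diff} gives $G_n'(t) = -2 G_n(t) + O(1/n)$, so $G_n(t) = e^{-2t}\bigl(G_n(0) + \int_0^t e^{2u} \E(R_n(Y_u))\,du\bigr)$ with the error integral bounded by $\frac{c}{n}\int_0^t e^{2u}\,du = O(e^{2t}/n)$; at $t = t_{n,-C}$ this is $O(e^{2t_{n,-C}}/n) = O(n^{1/\mu - 1}/n \cdot e^{-2C})$. Here one must be careful: $e^{2 t_{n,-C}} = n^{1/\mu} e^{-2C}$, so $e^{2t_{n,-C}}/n = n^{1/\mu - 1} e^{-2C}$, which is $\ge 1$ when $\mu < 1$; this would \emph{not} suffice. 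The resolution is that the decay rate is $2$, not $2\mu$, only if we have genuinely shown $Q_n h_n \le -2 h_n + O(1/n)$ with the full rate $2$; then $e^{-2 t_{n,-C}} = n^{-1/\mu} e^{2C} \le n^{-1}$ precisely when $\mu \le 1$, which holds. So $G_n(t_{n,-C}) \le O(1) \cdot n^{-1/\mu} + O(n^{-1/\mu})\cdot(\text{bounded}) = O(1/n)$, using $1/\mu \ge 1$. The \textbf{main obstacle} is making the identification $Q_n h_n = -2h_n + O(1/n)$ fully rigorous—in particular verifying that the $O(1)$ error terms in~\eqref{diff-of-q-1}--\eqref{diff-of-q-2}, once divided by $|A|$ or $n-|A|$ and multiplied back by $2a = O(1)$, really do produce only $O(1/n)$ after taking into account that $a$ itself may be as large as $O(1)$ but no larger on the relevant state space, and that the second-order Taylor/increment terms are uniformly controlled; the uniform lower bound $|A| \in (\varepsilon_0 n, (1-\varepsilon_0)n)$ is exactly what is needed here.
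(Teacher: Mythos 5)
Your approach is correct and coincides with the paper's: compute the increments of $h_n$ to get $Qh_n(u,v) = -2h_n(u,v) + O(1/n)$ (with the crucial cancellation of the identical $\tanh$ terms coming from~\eqref{diff-of-q-1} and~\eqref{diff-of-q-2}), then apply Proposition~\ref{make-it-diff} and Gr\"onwall to obtain $\E(h_n(U_t,V_t)) = e^{-2t}\bigl(4 + \int_0^t e^{2s}\E(R_n^{(3)}(U_s,V_s))\,\mathrm{d}s\bigr)$, and finally evaluate at $t = t_{n,-C}$ using $e^{-2t_{n,-C}} = n^{-1/\mu}e^{2C} \le e^{2C}/n$ since $1/\mu \ge 1$. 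One small clean-up: your worry about the error integral growing like $n^{1/\mu-1}$ is spurious because it is always multiplied by the prefactor $e^{-2t}$, so the product $e^{-2t}\cdot\frac{c}{2n}(e^{2t}-1)\le \frac{c}{2n}$ is $O(1/n)$ regardless of the value of $\mu \in (0,1]$; the condition $\mu \le 1$ is only needed to control the contribution from the initial value $G_n(0)=4$.
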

\begin{proof}
Observe that $h_n(u\pm2,v)-h_n(u,v)=\pm\dfrac{4}{|A|}\left(\dfrac{u}{|A|}-\dfrac{v}{n-|A|}\right)
+O\left(\dfrac{1}{n^2}\right)$ and a similar relation holds for $h_n(u,v\pm2)-h_n(u,v).$
Calculating $Qh_n$ we get
\begin{align*}
Qh_n(u,v)&=\ q((u,v),(u+2,v))\cdot(h_n(u+2,v)-h_n(u,v))\\&\ +q((u,v),(u-2,v))\cdot(h_n(u-2,v)-h_n(u,v))\\&\ +q((u,v),(u,v+2))\cdot(h_n(u,v+2)-h_n(u,v))\\&\ +q((u,v),(u,v-2))\cdot(h_n(u,v-2)-h_n(u,v))\\&=4\left(\dfrac{u}{|A|^2}-\dfrac{v}{|A|\cdot(n-|A|)}\right)\cdot(q((u,v),(u+2,v))-q((u,v),(u-2,v)))\\&\ +4\left(\dfrac{v}{(n-|A|)^2}-\dfrac{u}{|A|\cdot(n-|A|)}\right)\cdot(q((u,v),(u,v+2))-q((u,v),(u,v-2)))\\&\ +O\left(\dfrac{1}{n}\right)\\&\stackrel{(\ref{diff-of-q-1}), (\ref{diff-of-q-2})}{=}-2h_n(u,v)+O\left(\dfrac{1}{n}\right).
\end{align*}
So, there exists a function $R_n^{(3)}(u,v)$ such that $Qh_n(u,v) = -2h_n(u,v) + R_n^{(3)}(u,v)$ with $\left|R_n^{(3)}(u,v)\right|\le c/n$, for all $(u,v)\in\mathcal{T}_{n,A}$.\\ 
Applying Proposition \ref{make-it-diff} and setting $F_n^{(3)}(t)=\E(h_n(U_t,V_t))$ we find that
\begin{align*}
(F_n^{(3)}(t))'=\dfrac{\text{d}}{\text{d}t}\E(h_n(U_t,V_t))&=\E(Qh_n(U_t,V_t))=-2\E(h_n(U_t,V_t)+R_n^{(3)}(U_t,V_t))\\&=-2F_n^{(3)}(t)+\E(R_n^{(3)}(U_t,V_t)).
\end{align*}
Solving the differential equation we get the for all $t\ge0$,
$$\E(h_n(U_t,V_t))=F_n^{(3)}(t)=e^{-2t}\cdot\left(4+\int_0^t e^{2s}\cdot\E\left(R_n^{(3)}(U_s,V_s)\right))\ \text{d}s\right).$$
Working as in the proof of Lemma \ref{1st-2nd-mom} we get that
$$\E(h_n(U_{t_{n,-C}},V_{t_{n,-C}}))=O\left(\dfrac{1}{n}\right).$$
\end{proof}
\begin{prop}\label{conc-for-pair}
Suppose that the sequence $\sigma_0^{(n)}$ of the initial conditions satisfy the property that $|A_n|/n\to\lambda\in(1/2,1)$. Let $\alpha_1(\lambda,C):=\lambda\cdot\alpha(\lambda,C)$ and $\alpha_2(\lambda,C):=(1-\lambda)\cdot\alpha(\lambda,C)$. Then, for every $\varepsilon,\eta>0$, there exists $C>0$ (large enough) such that if $n$ is large enough,
$$\PR\left[\left\{\left|\dfrac{U_{t_{n,-C}}}{\sqrt{n}}-\alpha_1(\lambda,C)\right|\le\varepsilon\cdot\alpha_1(\lambda,C)\right\}\cap\left\{\left|\dfrac{V_{t_{n,-C}}}{\sqrt{n}}-\alpha_2(\lambda,C)\right|\le\varepsilon\cdot\alpha_2(\lambda,C)\right\}\right]\ge1-\eta.$$
\end{prop}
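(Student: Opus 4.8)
The plan is to recover the joint behaviour of $(U_{t_{n,-C}},V_{t_{n,-C}})$ from two scalar observables we already control at time $t_{n,-C}$: the magnetization $Y_t=U_t+V_t$, governed by Lemma~\ref{conc-for-mag}, and the normalized discrepancy $\delta_t:=\frac{U_t}{|A|}-\frac{V_t}{n-|A|}$, whose square is exactly $h_n(U_t,V_t)$ and is governed by Lemma~\ref{conc-h}. These two linear functionals determine the pair: solving the $2\times2$ system gives the exact identities
\[
U_t=\frac{|A|}{n}Y_t+\frac{|A|(n-|A|)}{n}\delta_t,\qquad V_t=\frac{n-|A|}{n}Y_t-\frac{|A|(n-|A|)}{n}\delta_t .
\]
Hence on any event where $Y_t/\sqrt n$ is close to $\alpha(\lambda,C)$ and $\delta_t$ is small, both $U_t/\sqrt n$ and $V_t/\sqrt n$ are simultaneously pinned down --- which is exactly what is needed, since the Proposition concerns the \emph{intersection} of the two coordinate events.

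First I would control $\delta_t$. Markov's inequality applied to Lemma~\ref{conc-h} gives, for any $\eta>0$, a constant $M=M(\eta)$ with $\PR\bigl[\delta_{t_{n,-C}}^{2}\le M/n\bigr]\ge 1-\eta/2$ for $n$ large; moreover $M$ can be taken independent of $C$, since the only $C$-dependent part of the bound on $\E h_n(U_{t_{n,-C}},V_{t_{n,-C}})$ produced in the proof of Lemma~\ref{conc-h} is the term $e^{-2t_{n,-C}}\cdot 4=4e^{2C}n^{-1/(1-\beta)}$, which is $o(1/n)$ for $\beta\in(0,1)$. On $\{\delta_{t_{n,-C}}^{2}\le M/n\}$, using $\frac{|A|(n-|A|)}{n}\le\frac n4$, the $\delta_t$-term in each identity above is at most $\frac14\sqrt{Mn}$ in absolute value, i.e.\ contributes at most $\frac14\sqrt M$ to $U_{t_{n,-C}}/\sqrt n$ and to $V_{t_{n,-C}}/\sqrt n$.

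Next I would invoke Lemma~\ref{conc-for-mag} with a small parameter $\varepsilon'>0$ (fixed below in terms of $\varepsilon$) and with $\eta/2$, obtaining $C_0$ so that for all $C\ge C_0$ and $n$ large, $\bigl|Y_{t_{n,-C}}/\sqrt n-\alpha(\lambda,C)\bigr|\le\varepsilon'\alpha(\lambda,C)$ with probability $\ge 1-\eta/2$. On the intersection of this event with the one from the previous step (probability $\ge 1-\eta$), the identities together with $|A|/n\to\lambda$ give, for $n$ large and an absolute constant $c_1$,
\[
\Bigl|\tfrac{U_{t_{n,-C}}}{\sqrt n}-\lambda\alpha(\lambda,C)\Bigr|\le c_1\varepsilon'\alpha(\lambda,C)+\tfrac14\sqrt M,\qquad \Bigl|\tfrac{V_{t_{n,-C}}}{\sqrt n}-(1-\lambda)\alpha(\lambda,C)\Bigr|\le c_1\varepsilon'\alpha(\lambda,C)+\tfrac14\sqrt M .
\]
Since $\lambda>1/2$ we have $g(2\lambda-1)>0$, hence $\alpha(\lambda,C)=e^{(1-\beta)C+c(\beta)}g(2\lambda-1)\to\infty$ as $C\to\infty$. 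Recalling $\alpha_1=\lambda\alpha(\lambda,C)$ and $\alpha_2=(1-\lambda)\alpha(\lambda,C)$, I would first pick $\varepsilon'$ with $c_1\varepsilon'\le\frac{\varepsilon}{2}\min(\lambda,1-\lambda)$, and then enlarge $C$ beyond $C_0$ so that also $\frac14\sqrt M\le\frac{\varepsilon}{2}\min(\lambda,1-\lambda)\,\alpha(\lambda,C)$; both displayed right-hand sides are then at most $\varepsilon\alpha_1$ and $\varepsilon\alpha_2$ respectively, which is the claim.

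I expect the error bookkeeping in the last step to be the main obstacle. The typical size $n^{-1/2}$ of $\delta_{t_{n,-C}}$ creates an additive fluctuation of order $\sqrt n$ in $U_{t_{n,-C}}$ --- of order $1$ after dividing by $\sqrt n$ --- which is the same order as $\alpha_1(\lambda,C)$ itself when $C$ is bounded, so no fixed $C$ can suffice. What rescues the argument is that $\alpha_1(\lambda,C)$ grows like $e^{(1-\beta)C}$ while the constant $M$ controlling the $\delta$-fluctuation stays bounded in $C$, so sending $C\to\infty$ drives the relative error to $0$. This fixes the order of quantifiers: $\varepsilon'$ and $M$ are chosen (from $\varepsilon$ and $\eta$) before $C$, and $n$ is sent to infinity only at the very end.
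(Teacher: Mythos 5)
Your proposal is correct and takes essentially the same route as the paper: both decompose $(U_t,V_t)$ into the magnetization $Y_t$ and the normalized discrepancy $\frac{U_t}{|A|}-\frac{V_t}{n-|A|}$, control each via Lemma~\ref{conc-for-mag} and Markov's inequality on $h_n$, and exploit that $\alpha(\lambda,C)\to\infty$ as $C\to\infty$ to absorb the discrepancy error. The only superficial difference is bookkeeping: the paper applies Markov with the $C$-growing threshold $\frac{\varepsilon}{4\sqrt n}\alpha(\lambda,C)$ (yielding a failure probability $O_\varepsilon(e^{-2\mu C})$), whereas you use a fixed threshold $\sqrt{M/n}$ and separately send $C\to\infty$; both execute the same idea, and your explicit caveat about $\beta>0$ in the ``$M$ uniform in $C$'' claim corresponds to a $C$-dependence that the paper's $O(1/n)$ bound also implicitly carries when $\beta=0$.
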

\begin{proof}
In this proof, we always assume that $n$ is large enough. For $\varepsilon>0$ consider the event
$$B_{\varepsilon}=\left\{\left\{\left|\dfrac{Y_{t_{n,-C}}}{\sqrt{n}}-\alpha(\lambda,C)\right|\le\dfrac{\varepsilon}{4}\cdot\alpha(\lambda,C)\right\}\cap\left\{\left|\dfrac{U_{t_{n,-C}}}{|A|}-\dfrac{V_{t_{n,-C}}}{n-|A|}\right|\le\dfrac{\varepsilon}{4\sqrt{n}}\cdot\alpha(\lambda,C)\right\}\right\}.$$
Lemma \ref{conc-h}, together with Markov's inequality for $h_n(U_{t_{n,-C}},V_{t_{n,-C}})$ implies
$$\PR\left[\left|\dfrac{U_{t_{n,-C}}}{|A|}-\dfrac{V_{t_{n,-C}}}{n-|A|}\right|\le\dfrac{\varepsilon}{4\sqrt{n}}\cdot\alpha(\lambda,C)\right]\ge1-O_{\varepsilon}\left(e^{-2\mu C}\right).$$
Hence due to Lemma \ref{conc-for-mag}, for any $\varepsilon,\eta>0$ there exists large enough $C>0$ such that $\PR[B_{\varepsilon}]\ge1-\eta.$
Our main claim is that
$$B_{\varepsilon}\subseteq\left\{\left|\dfrac{U_{t_{n,-C}}}{\sqrt{n}}-\alpha_1(\lambda,C)\right|\le\varepsilon\cdot\alpha_1(\lambda,C)\right\}\cap\left\{\left|\dfrac{V_{t_{n,-C}}}{\sqrt{n}}-\alpha_2(\lambda,C)\right|\le\varepsilon\cdot\alpha_2(\lambda,C)\right\}.$$
We prove only the first inclusion as the other one will follow similarly. Suppose that $B_{\varepsilon}$ holds and $n$ is such that
$$\left(1-\dfrac{\varepsilon}{4}\right)\lambda\le\dfrac{|A_n|}{n}\le\left(1+\dfrac{\varepsilon}{4}\right)\lambda.$$
Observe that
$$\dfrac{U_{t_{n,-C}}}{\sqrt{n}}=\dfrac{|A_n|}{n}\cdot\left(\dfrac{Y_{t_{n,-C}}}{\sqrt{n}}+\dfrac{n-|A_n|}{\sqrt{n}}\cdot\left(\dfrac{U_{t_{n,-C}}}{|A_n|}-\dfrac{V_{t_{n,-C}}}{n-|A_n|}\right)\right).$$
Then
\begin{align*}
\left|\dfrac{U_{t_{n,-C}}}{\sqrt{n}}-\alpha_1(\lambda,C)\right|\le\ &\dfrac{|A_n|}{n}\cdot\left(\left|\dfrac{Y_{t_{n,-C}}}{\sqrt{n}}-\alpha(\lambda,C)\right|+\left|\dfrac{n-|A_n|}{\sqrt{n}}\cdot\left(\dfrac{U_{t_{n,-C}}}{|A_n|}-\dfrac{V_{t_{n,-C}}}{n-|A_n|}\right)\right|\right)\\&+\left|\dfrac{|A_n|}{n}-\lambda\right|\cdot\alpha(\lambda,C)\\\le\ & \left(1+\dfrac{\varepsilon}{4}\right)\lambda\cdot\left(\dfrac{\varepsilon}{4}\cdot\alpha(\lambda,C)+\dfrac{\varepsilon}{4}\cdot\alpha(\lambda,C)\right)+\dfrac{\varepsilon}{4}\lambda\cdot\alpha(\lambda,C)\\\le\ & \varepsilon\cdot\alpha_1(\lambda,C)
\end{align*}
which is what we wanted.
\end{proof}
\begin{prop}\label{conc-pair-1/2}
Let $\lambda=1/2$. For any $\eta>0$ there exists a constant $M_2>0$ such that for any $C>0$, if $n$ is large enough,
$$\PR\left[\left\lVert\left(\dfrac{U_{t_{n,-C}}}{\sqrt{n}},\dfrac{V_{t_{n,-C}}}{\sqrt{n}}\right)\right\rVert_\infty\le M_2\right]\ge1-\eta.$$
\end{prop}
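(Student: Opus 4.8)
The plan is to run the argument behind Proposition~\ref{conc-for-pair} in the degenerate case $\lambda=1/2$, where the concentration point $\alpha(1/2,C)=e^{\mu C+c(\beta)}g(0)=0$ vanishes, so the sharp magnetization concentration of Lemma~\ref{conc-for-mag} is no longer available and must be replaced by the cruder boundedness statement of Lemma~\ref{mag-1/2}. Fix $\eta>0$. By Lemma~\ref{mag-1/2} there is $M_1>0$, independent of $C$, with $\PR[\,|Y_{t_{n,-C}}|/\sqrt n\le M_1\,]\ge 1-\eta/2$ for every $C>0$, provided $n$ is large. By Lemma~\ref{conc-h} together with Markov's inequality applied to $h_n(U_{t_{n,-C}},V_{t_{n,-C}})\ge 0$,
$$\PR\left[\,h_n(U_{t_{n,-C}},V_{t_{n,-C}})>\tfrac{K^2}{n}\,\right]\le\frac{n\,\E[h_n(U_{t_{n,-C}},V_{t_{n,-C}})]}{K^2}=O\!\left(\frac{1}{K^2}\right),$$
so, since the implied constant is uniform in $C$ (see the last paragraph), we may fix $K>0$, independent of $C$, with $\PR[\,h_n(U_{t_{n,-C}},V_{t_{n,-C}})\le K^2/n\,]\ge 1-\eta/2$ once $n$ is large. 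Let $E$ be the intersection of these two events; then $\PR[E]\ge 1-\eta$.

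On $E$, I would read the conclusion off the identity (the same one appearing in the proof of Proposition~\ref{conc-for-pair}, which follows from $U_t+V_t=Y_t$ by solving for $U_t$ and $V_t$)
$$\frac{U_{t_{n,-C}}}{\sqrt n}=\frac{|A_n|}{n}\cdot\frac{Y_{t_{n,-C}}}{\sqrt n}+\frac{|A_n|}{n}\cdot\frac{n-|A_n|}{\sqrt n}\left(\frac{U_{t_{n,-C}}}{|A_n|}-\frac{V_{t_{n,-C}}}{n-|A_n|}\right)$$
and the symmetric identity obtained by swapping $|A_n|\leftrightarrow n-|A_n|$ and $U\leftrightarrow V$. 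Since $|A_n|/n\le 1$ and $(n-|A_n|)/\sqrt n\le\sqrt n$, the absolute value of the second term is at most $\sqrt n\cdot\sqrt{h_n(U_{t_{n,-C}},V_{t_{n,-C}})}=\sqrt{n\,h_n(U_{t_{n,-C}},V_{t_{n,-C}})}\le K$ on $E$, while the first term is at most $M_1$ on $E$; the same two bounds hold for $V_{t_{n,-C}}/\sqrt n$. Hence $\lVert(U_{t_{n,-C}}/\sqrt n,\,V_{t_{n,-C}}/\sqrt n)\rVert_\infty\le M_1+K$ on $E$, and $M_2:=M_1+K$, which depends only on $\eta$, does the job.

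The one point that needs care — and the only genuine obstacle — is that $K$, equivalently the $O(1/n)$ bound on $\E[h_n(U_{t_{n,-C}},V_{t_{n,-C}})]$, can be taken uniformly in $C$; this is also what makes the case $\lambda=1/2$ qualitatively different from $\lambda\in(1/2,1)$. Tracing the proof of Lemma~\ref{conc-h}, the ODE for $F_n^{(3)}$ gives $\E[h_n(U_t,V_t)]=4e^{-2t}+O(1/n)$, so the transient term at $t=t_{n,-C}$ equals $4e^{2C}n^{-1/(1-\beta)}$. Because $h_n$ contracts at rate $2$, which is at least the magnetization's relaxation rate $2(1-\beta)$, we have $1/(1-\beta)\ge 1$; hence this term is $O(e^{2C}/n)$ — and, for $\beta>0$, even $o(1/n)$ — so for each fixed $C$ it is dominated by the genuine $O(1/n)$ error once $n$ is large, yielding a $C$-free bound on $\E[h_n(U_{t_{n,-C}},V_{t_{n,-C}})]$. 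This rate gap is exactly why the imbalance $U_{t_{n,-C}}/|A_n|-V_{t_{n,-C}}/(n-|A_n|)$ is driven down to scale $n^{-1/2}$, just fast enough to absorb the $(n-|A_n|)/\sqrt n\asymp\sqrt n$ factor in the identity above, so that the pair stays $O(1)$ instead of drifting out to scale $e^{\mu C}$ as it does when $\lambda>1/2$.
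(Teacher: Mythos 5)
Your proof follows the same route as the paper's: Lemma~\ref{mag-1/2} controls $|Y_{t_{n,-C}}|/\sqrt{n}$, Lemma~\ref{conc-h} plus Markov controls the imbalance $U_{t_{n,-C}}/|A|-V_{t_{n,-C}}/(n-|A|)$ at scale $n^{-1/2}$, and the algebraic identity combines them; the paper's event $B_{M_2}$ is exactly your $E$ with a single constant in place of your $M_1,K$.

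One caveat on your final paragraph. You write that the transient $4e^{2C}n^{-1/(1-\beta)}$ "is dominated by the genuine $O(1/n)$ error once $n$ is large, yielding a $C$-free bound." That domination holds precisely when $1/(1-\beta)>1$, i.e.\ $\beta>0$, where the transient is $o(1/n)$ for fixed $C$. At $\beta=0$ the two rates $2$ and $2(1-\beta)$ coincide, the transient equals $4e^{2C}/n$, and the resulting bound on $\E\bigl[h_n(U_{t_{n,-C}},V_{t_{n,-C}})\bigr]$ is $O(e^{2C}/n)$ with an irreducible $C$-dependence; so your $K$, and hence $M_2$, would depend on $C$ in that boundary case. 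This is not a defect of your argument relative to the paper's — the paper's phrase "similarly as we did before by applying Markov" glosses over the same point, and $\Sigma$ in Lemma~\ref{conc-h} is only asserted to be $O(1/n)$ without a uniformity-in-$C$ claim — but since you raised the $C$-uniformity explicitly, it is worth being precise that the clean rate separation you identify fails exactly at $\beta=0$.
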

\begin{proof}
Denote by $B_{M_2}$ the event
$$B_{M_2}=\left\{\left|\dfrac{Y_{t_{n,-C}}}{\sqrt{n}}\right|\le \dfrac{M_2}{2}\right\}\cap\left\{\left|\dfrac{U_{t_{n,-C}}}{|A|}-\dfrac{V_{t_{n,-C}}}{n-|A|}\right|\le\dfrac{M_2}{2\cdot\sqrt{n}}\right\}.$$
Due to Lemma \ref{mag-1/2} and similarly as we did before by applying Markov we get that for any $\eta>0$ there exist a large enough constant $M_2$ such that for any $C>0$, if $n$ is large enough,
$$\PR\left[B_{M_2}\right]\ge1-\eta.$$
Now we claim that $$B_{M_2}\subseteq\left\{\left\lVert\left(\dfrac{U_{t_{n,-C}}}{\sqrt{n}},\dfrac{V_{t_{n,-C}}}{\sqrt{n}}\right)\right\rVert_\infty\le M_2\right\}.$$
Applying the triangle's inequality in a similar way as before, we get that
$$\left|\dfrac{U_{t_{n,-C}}}{\sqrt{n}}\right|\le\dfrac{|A_n|}{n}\cdot\left(\left|\dfrac{Y_{t_{n,-C}}}{\sqrt{n}}\right|+\left|\dfrac{n-|A_n|}{\sqrt{n}}\cdot\left(\dfrac{U_{t_{n,-C}}}{|A_n|}-\dfrac{V_{t_{n,-C}}}{n-|A_n|}\right)\right|\right)\le M_2.$$
In a similar way, we can obtain that on $B_{M_2}$,
$\left|\dfrac{V_{t_{n,-C}}}{\sqrt{n}}\right|\le M_2.$
\end{proof}
\subsection{Two useful lemmas}
Next, we state and prove two lemmas that will help us finish the proof of Theorem \ref{main-theorem}. In both cases, we assume that $|A|\in(\varepsilon_0n,(1-\varepsilon_0)n)$ and that the chain $(U_t,V_t)$ does not start at $(|A|,-n+|A|)$, but rather at $(u_0,v_0)\in(-c\sqrt{n},c\sqrt{n})\times(-c\sqrt{n},c\sqrt{n})$.
\begin{lem}\label{tech-1}
For every $a,t_0>0$,
\begin{align*}
\PR\left(|U_{t_0}-u_0|>a\sqrt{n}\right)&=O\left(\dfrac{t_0}{a^2}\right)\ \ \text{and}\\
\PR\left(|V_{t_0}-v_0|>a\sqrt{n}\right)&=O\left(\dfrac{t_0}{a^2}\right),
\end{align*}
where the implied constant depends on $c>0$.
\end{lem}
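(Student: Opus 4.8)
The plan is to reduce everything to a second-moment estimate and Markov's inequality: since $\PR(|U_{t_0}-u_0|>a\sqrt n)=\PR\big((U_{t_0}-u_0)^2>a^2 n\big)\le \E[(U_{t_0}-u_0)^2]/(a^2 n)$, it suffices to show $\E[(U_{t_0}-u_0)^2]=O(n t_0)$, with the implied constant depending only on $c$ (and the fixed $\beta$); the estimate for $V$ then follows by the symmetric argument, exchanging the roles of $A$ and $A^c$ and using \eqref{diff-of-q-2} in place of \eqref{diff-of-q-1}. To bound the second moment I would apply Proposition \ref{make-it-diff} to $f(u,v)=(u-u_0)^2$. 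Since $f$ does not depend on $v$ and $f(u\pm 2,v)-f(u,v)=\pm 4(u-u_0)+4$, the $v$-transitions contribute nothing and one gets the exact identity $Qf(u,v)=4(u-u_0)\big(q((u,v),(u+2,v))-q((u,v),(u-2,v))\big)+4\big(q((u,v),(u+2,v))+q((u,v),(u-2,v))\big)$. The second group of terms is $O(n)$ (bounded by $4|A|\le 4n$), and by \eqref{diff-of-q-1} the first group equals $-2(u-u_0)^2-2u_0(u-u_0)+2|A|(u-u_0)\tanh\!\big(\tfrac{\beta}{n}(u+v)\big)+O(|u-u_0|)$.

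The point of the computation is to absorb every cross term either into the favourable quadratic $-2(u-u_0)^2$ or into an $O(n)$ remainder. Using $|u_0|\le c\sqrt n$ and the elementary inequality $2xy\le \varepsilon x^2+\varepsilon^{-1}y^2$, the term $2u_0(u-u_0)$ is bounded by $\tfrac12(u-u_0)^2+O(n)$; the linear remainder $O(|u-u_0|)$ is bounded by $\tfrac14(u-u_0)^2+O(1)$; and, crucially, using $|\tanh x|\le |x|$ together with $|A|\le n$ one has $\big|2|A|(u-u_0)\tanh\!\big(\tfrac{\beta}{n}(u+v)\big)\big|\le 2\beta|u-u_0|\,|u+v|\le (u-u_0)^2+\beta^2(u+v)^2$. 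Collecting everything yields $Qf(u,v)\le -\tfrac14(u-u_0)^2+\beta^2(u+v)^2+O(n)$. Writing $\Phi(t)=\E[(U_t-u_0)^2]$ (so $\Phi(0)=0$) and $Y_t=U_t+V_t$ for the magnetization, Proposition \ref{make-it-diff} then gives $\Phi'(t)\le -\tfrac14\Phi(t)+\beta^2\,\E[Y_t^2]+O(n)$.

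The one non-routine input — and the step I expect to be the main obstacle — is a bound on $\E[Y_t^2]$ that is \emph{uniform in $t\ge 0$}, namely $\E[Y_t^2]=O(n)$; without this the magnetization term above could a priori only be absorbed into an $O(n^2)$ remainder, which would be useless. This is exactly where the high-temperature hypothesis $\beta<1$ enters, forcing the magnetization to live on scale $\sqrt n$. I would prove it by the same device: applying Proposition \ref{make-it-diff} to $y\mapsto y^2$ for the magnetization chain gives $Q_n(y^2)(k)=4k\big(q_n(k,k+2)-q_n(k,k-2)\big)+4\big(q_n(k,k+2)+q_n(k,k-2)\big)$, where the second group is $O(n)$ and, since $q_n(k,k+2)-q_n(k,k-2)=-\tfrac k2+\tfrac n2\tanh(\tfrac{\beta}{n}k)+O(1)$ and $k\cdot\tfrac n2\tanh(\tfrac{\beta}{n}k)\le \tfrac{\beta}{2}k^2$ for every $k$ (by $|\tanh x|\le|x|$ and the sign analysis), the first group is $\le -2(1-\beta)k^2+O(n)$. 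Hence $\tfrac{d}{dt}\E[Y_t^2]\le -2(1-\beta)\E[Y_t^2]+O(n)$, so $\E[Y_t^2]\le e^{-2(1-\beta)t}Y_0^2+O(n)=O(n)$ for all $t\ge 0$, the constant depending on $c$ through $|Y_0|=|u_0+v_0|\le 2c\sqrt n$.

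Plugging this back in gives $\Phi'(t)\le -\tfrac14\Phi(t)+O(n)$ with $\Phi(0)=0$, and solving this differential inequality (e.g. by Grönwall) gives $\Phi(t_0)\le O(n)\big(1-e^{-t_0/4}\big)\le O(n t_0)$ for every $t_0>0$. Markov's inequality then yields $\PR(|U_{t_0}-u_0|>a\sqrt n)\le \Phi(t_0)/(a^2 n)=O(t_0/a^2)$, with the implied constant depending only on $c$, and the bound for $V$ is obtained identically. The remaining work is entirely the bookkeeping of the Taylor/rate expansions and the choice of constants in the elementary inequalities, none of which presents any real difficulty once the uniform magnetization bound is in hand.
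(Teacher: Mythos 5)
Your proof is correct and follows the same overall strategy as the paper: apply the generator to $(u-u_0)^2$, control the cross-term produced by the $\tanh$ drift through a second-moment bound on the magnetization, and conclude by Markov's inequality. The one place where you diverge is how that magnetization input is obtained. The paper completes the square to reach $Qf(u,v)\le |A|^2\tanh\left(\frac{\beta}{n}(u+v)\right)^2+O(n)$, then invokes the inequality $\tanh(\beta x)^2\le x^2\le g(x)^2$ (a consequence of (\ref{g-int})) to recognise the $\tanh^2$ term as being dominated by $f_n(u+v)^2$, and finally recycles the already-established estimate (\ref{sec-mom}) for $\E[f_n(Y_t)^2]$. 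You instead carry the explicit $\beta^2(u+v)^2$ term (via $|\tanh x|\le|x|$) and then prove $\E[Y_t^2]=O(n)$ from scratch by applying the generator to $y\mapsto y^2$ directly on the magnetization chain, where the high-temperature hypothesis $\beta<1$ surfaces explicitly as a drift coefficient $-2(1-\beta)<0$. Your route is somewhat more self-contained and makes the mechanism visible; the paper's is more economical because it reuses the $f_n$/$g$ machinery already set up in Section 2.1 (indeed $x^2\le g(x)^2$ together with (\ref{sec-mom}) already implies $\E[Y_t^2]=O(n)$). Both arguments are sound and yield the same bound.
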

\begin{proof}
First, note that due to (\ref{g-int}), for any $x\in[-1,1]$,
$$\tanh(\beta x)^2\le x^2\le g(x)^2.$$
We apply Proposition \ref{make-it-diff} to the function $f(u,v)=(u-u_0)^2$:
\begin{align*}
Qf(u,v)&=q((u,v),(u+2,v))(4(u-u_0)+O(1))+q((u,v),(u-2,v))(-4(u-u_0)+O(1))
\\&=2(u-u_0)\left(-u+|A|\tanh\left(\dfrac{\beta}{n}(u+v)\right)\right)+O(n)
\\&=-2(u-u_0)^2+2(u-u_0)\left(|A|\tanh\left(\dfrac{\beta}{n}(u+v)\right)-u_0\right)+O(n)
\\&\le\dfrac{1}{2}\left(|A|\tanh\left(\dfrac{\beta}{n}(u+v)\right)-u_0\right)^2+O(n)
\\&\le|A|^2\tanh\left(\dfrac{\beta}{n}(u+v)\right)^2+u_0^2+O(n)
\\&\le|A|^2f_n(u+v)^2+O(n)\le n^2f_n(u+v)^2+O(n).
\end{align*}
Therefore, due to (\ref{sec-mom}),
\begin{align*}
\dfrac{\text{d}}{\text{d}t}\E\left((U_t-u_0)^2\right)&\le n^2\cdot\E\left(f_n(U_t+V_t)^2\right)+O(n)\\&\le n^2\cdot\left(f_n(u_0+v_0)^2+O\left(\dfrac{1}{n}\right)\right)+O(n)
\\&=O(n).
\end{align*}
So, for any $t>0$,
$$\E\left((U_t-u_0)^2\right)=O(tn)\ \Rightarrow\ \PR(|U_{t_0}-u_0|>a\sqrt{n})\le\dfrac{\E\left((U_{t_0}-u_0)^2\right)}{a^2n}=O\left(\dfrac{t_0}{a^2}\right).$$
The proof for the second relation is identical.
\end{proof}
\begin{lem}\label{tech-2}
For every $t_0>0$,
$$\PR\left(\exists t\le t_0:|U_t|\ \text{or}\ |V_t|\ > \varepsilon_0n/3\right)=o_{n\to\infty}(1).$$
\end{lem}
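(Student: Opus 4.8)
The plan is to show that, starting from $(u_0, v_0) \in (-c\sqrt n, c\sqrt n)^2$, the chain is extremely unlikely to travel a macroscopic distance $\varepsilon_0 n / 3$ in bounded time $t_0$. Since $|U_t| \le |A| \le n$ always, and $U_0 = u_0 = O(\sqrt n)$, the event in question requires an increment of order $n$. I would control this by a second-moment / drift argument analogous to Lemma \ref{tech-1}, but upgraded to a maximal inequality so that it bounds the supremum over $t \le t_0$ rather than a single time slice.

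First I would observe that, as long as $|U_t| \le \varepsilon_0 n/3$ and $|V_t| \le \varepsilon_0 n /3$ (so that in particular $|U_t + V_t| \le 2\varepsilon_0 n/3$ and all the tanh terms are bounded away from $\pm 1$), the drift of $U_t$ is strictly restoring: from \eqref{diff-of-q-1}, $q((u,v),(u+2,v)) - q((u,v),(u-2,v)) = -u/2 + (|A|/2)\tanh((\beta/n)(u+v)) + O(1)$, and since $|A|\tanh((\beta/n)(u+v)) = |A| \cdot O(\varepsilon_0) \le |u| + |v|$-type bounds hold, one gets that on this region $U_t$ has negative drift once $|U_t|$ is, say, bigger than $\varepsilon_0 n/100$. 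Thus I would define the stopping time $\tau = \inf\{t : |U_t| > \varepsilon_0 n/3 \text{ or } |V_t| > \varepsilon_0 n /3\}$ and work with the stopped process. Applying Proposition \ref{make-it-diff} to $f(u,v) = (u - u_0)^2$ on $\{t \le \tau\}$, exactly as in Lemma \ref{tech-1}, gives $Qf \le (\text{something bounded by } n^2 f_n(u+v)^2 + O(n))$; but on the stopped region $f_n(u+v)^2 = g((u+v)/n)^2$ is merely $O(1)$ (in fact $O(\varepsilon_0^2)$), so $\mathbb E[(U_{t \wedge \tau} - u_0)^2] = O(n^2 t_0)$ — which is not by itself small enough. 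The fix is that the event $\{\tau \le t_0\}$ forces $|U_{\tau} - u_0| \ge \varepsilon_0 n/3 - c\sqrt n$ or $|V_\tau - v_0|$ comparably large; I would instead use a supermartingale built from the \emph{restoring} drift. Concretely, on the region $|U_t|, |V_t| \le \varepsilon_0 n/3$ one has $Qf(u,v) \le -c_1 (u - u_0)^2 + c_2 n$ for constants $c_1, c_2 > 0$ depending on $\varepsilon_0, c$ (using that $u_0 = O(\sqrt n)$ makes the cross term $2(u - u_0)(|A|\tanh(\cdots) - u_0)$ small relative to $-(u-u_0)^2$ once $|u - u_0| \gg \sqrt n$). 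Hence $t \mapsto e^{c_1 (t \wedge \tau)}\big((U_{t\wedge\tau} - u_0)^2 + C n\big)$ is, up to the additive constant absorbed appropriately, a supermartingale, giving $\mathbb E[(U_{t \wedge \tau} - u_0)^2] = O(n)$ uniformly in $t \le t_0$. Then by Markov applied at $t = t_0$, $\mathbb P(\tau_U \le t_0) = O(n / (\varepsilon_0 n/3)^2) = O(1/n) = o(1)$, where $\tau_U$ is the first time $|U_t| > \varepsilon_0 n /3$; the same works for $V$, and a union bound finishes it.

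The main obstacle I anticipate is making the supermartingale argument genuinely rigorous across the stopping time: one must be careful that the bound $Qf \le -c_1 (u-u_0)^2 + c_2 n$ holds on \emph{all} of $\{|u|, |v| \le \varepsilon_0 n/3\}$ including near the boundary, and that after $\tau$ the process cannot have "jumped past" the threshold by more than $2$ (true, since increments are $\pm 2$), so that at $t = \tau$ one indeed has $(U_\tau - u_0)^2 \ge (\varepsilon_0 n/3 - c\sqrt n - 2)^2 \asymp n^2$. Given $\mathbb E[(U_{t_0 \wedge \tau} - u_0)^2] = O(n)$ and $(U_{t_0 \wedge \tau} - u_0)^2 \ge c_3 n^2 \mathbf 1_{\{\tau_U \le t_0\}}$ on the relevant event, Markov yields $\mathbb P(\tau_U \le t_0) \le O(n)/(c_3 n^2) = o(1)$. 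Alternatively — and this is perhaps cleaner — I would avoid the supermartingale entirely and just run Doob's $L^2$ maximal inequality on the martingale part $M_t = (U_{t\wedge\tau} - u_0) - \int_0^{t\wedge\tau}(\text{drift})\,ds$ whose predictable quadratic variation is $O(n t_0)$, combined with the deterministic bound $\big|\int_0^{t\wedge\tau}(\text{drift})\,ds\big| \le \int_0^{t_0}\big(|U_s| + |A|\big)\,ds \le O(n t_0)$ — but this only gives $O(n t_0)$, not $o(n)$, so the restoring-drift refinement above really is needed. I will present the supermartingale version.
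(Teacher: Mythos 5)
Your supermartingale inequality $Qf(u,v)\le -c_1(u-u_0)^2+c_2n$ on $\{|u|,|v|\le\varepsilon_0 n/3\}$ is false for $\beta$ close to $1$, and this is a genuine gap, not a boundary technicality. Expanding as you did,
$Qf(u,v)=-2(u-u_0)^2+2(u-u_0)\left(|A|\tanh\!\left(\tfrac{\beta}{n}(u+v)\right)-u_0\right)+O(n)$.
Take $u_0\approx 0$ and $u=v=\varepsilon_0 n/4$ (well inside your region). Then $\tanh(\beta(u+v)/n)\approx \beta\varepsilon_0/2$, so
$Qf\approx -2u^2+2u\cdot|A|\cdot\tfrac{\beta\varepsilon_0}{2}\approx\varepsilon_0^2n^2\left(-\tfrac{1}{8}+\tfrac{\beta\lambda}{4}\right)$,
which is \emph{positive} whenever $\beta\lambda>1/2$ (e.g.\ $\beta=0.9$, $\lambda=0.8$, both permitted here). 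The single coordinate $U$ simply does not have restoring drift toward $u_0$ on this region: when $V$ is also large and of the same sign, the $|A|\tanh(\tfrac{\beta}{n}(u+v))$ term pushes $U$ outward. Only the sum $Y=U+V$ is reliably restoring (this is the content of the magnetization analysis), and Lemma \ref{tech-1} exploits exactly that, ending up with $\E[(U_t-u_0)^2]=O(tn)$ --- growing in $t$, not uniformly $O(n)$. Your attempt to improve this to uniform $O(n)$ is what breaks.

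The paper's proof does not try to improve Lemma \ref{tech-1}; it keeps the $O(tn)$ bound and converts it into a supremum bound by a completely different device. Partition $[0,t_0]$ into intervals of length $\asymp\varepsilon_0$; by a Poisson large-deviation estimate, with probability $1-o(1)$ no such interval sees more than $O(\varepsilon_0 n)$ clock rings, so the walk cannot move by more than $O(\varepsilon_0 n)$ within any window of length $\varepsilon_0/50$. Consequently, on this good event, if $|U_t|>\varepsilon_0 n/3$ ever occurs before $t_0$ then $|U_s|\ge\varepsilon_0 n/6$ must persist for a time window of length $\ge\varepsilon_0/50$, forcing the occupation time $Z=\int_0^{t_0}\mathbf{1}_{\{|U_s|\ge\varepsilon_0 n/6\}}\,\text{d}s$ to be at least $\varepsilon_0/50$. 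But integrating the single-time estimate of Lemma \ref{tech-1} gives $\E(Z)=O(1/n)$, so Markov's inequality finishes. This is essentially the refinement your ``alternative'' Doob approach was missing: rather than trying to bound the drift integral deterministically (which is indeed too large for $t_0$ not small), you bound a good event on the number of updates, which localizes the excursion and lets you reuse the fixed-time second-moment estimate. I would suggest adopting this occupation-time plus Poisson-clock argument, or, if you insist on a Lyapunov approach, you must use a joint Lyapunov function of $(U,V)$ (e.g.\ a quadratic form tied to the covariance $\Sigma$) rather than $(u-u_0)^2$ alone.
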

\begin{proof}
Divide the interval $[0,t_0]$ into intervals $I_1,I_2,\dots,I_k$ of lengths $\ell_i\in[\varepsilon_0/100,\varepsilon_0/50]$. Let $E$ be the event that in each interval $I_i$, the number of updates that occur in $I_i$ is $\le\varepsilon_0n/25$. Due to standard large deviation estimates, $\PR(E^c)=o_{n\to\infty}(1)$. Moreover, observe that on $E$, the number of updates on any time interval of length $\le\varepsilon_0/50$ is at most $\varepsilon_0n/6$, as the interval is covered by the union of at most 3 of the intervals $I_i$. Let
$$Z=\int_0^{t_0}\mathbf{1}_{|U_t|\ge\varepsilon_0n/6}\ \text{d}t.$$
It is easy to observe that on the event $\{\exists t\le t_0:|U_t|>\varepsilon_0n/3\}\cap E$, $Z\ge\varepsilon_0/50$. Also,
$$\E(Z)=\int_0^{t_0}\PR(|U_t|\ge\varepsilon_0n/6)\ \text{d}t\le\int_0^{t_0}\dfrac{ct}{\varepsilon_0^2n}\ \text{d}t=O\left(n^{-1}\right).$$
Therefore, due to Markov's inequality,
\begin{align*}
\PR(\exists\ t\le t_0:|U_t|>\varepsilon_0n/3)&\le\PR(E^c)+\PR(\{\exists t\le t_0:|U_t|>\varepsilon_0n/3\}\cap E)\\&\le\PR(E^c)+\PR(Z\ge\varepsilon_0/50)\le\PR(E^c)+O(n^{-1})\\&=o_{n\to\infty}(1).
\end{align*}
Proving the statement in question is identical in the case of $(V_t)_{t\ge0}$, which implies the desired result.
\end{proof}
\section{Diffusion Approximation and TV distances}
In this section, we work with the rescaled version $(\tilde{U}_t,\tilde{V}_t)$ of $(U_t,V_t)$, defined as 
$$\tilde{U}_t^{(n)}=\dfrac{U_{t_{n,-C}+t}}{\sqrt{n}}\ \ \text{and}\ \ \tilde{V}_t^{(n)}=\dfrac{V_{t_{n,-C}+t}}{\sqrt{n}}.$$
The process $((\tilde{U}_t^{(n)},\tilde{V}_t^{(n)}))_{t\ge0}$ is a Markov chain with state space 
$$\mathcal{S}_n=\left\{-\dfrac{|A|}{\sqrt{n}},-\dfrac{|A|-2}{\sqrt{n}},\dots,\dfrac{|A|}{\sqrt{n}}\right\}\times\left\{-\dfrac{n-|A|}{\sqrt{n}},-\dfrac{n-|A|-2}{\sqrt{n}},\dots,\dfrac{n-|A|}{\sqrt{n}}\right\}$$ 
and transition rates equal to
\begin{align*} 
& q\left((\tilde{u},\tilde{v}), \left(\tilde{u}+\dfrac{2}{\sqrt{n}},\tilde{v}\right)\right) 
= \dfrac{|A| - \tilde{u}\sqrt{n}}{4} \cdot \left(1 + \tanh\left(\frac{\beta}{\sqrt{n}} \cdot (\tilde{u}+\tilde{v})\right)\right) + O(1)  \\
& q\left((\tilde{u},\tilde{v}), \left(\tilde{u}-\dfrac{2}{\sqrt{n}},\tilde{v}\right)\right) 
= \dfrac{|A| + \tilde{u}\sqrt{n}}{4} \cdot \left(1 - \tanh\left(\frac{\beta}{\sqrt{n}} \cdot (\tilde{u}+\tilde{v})\right)\right) + O(1)  \\
& q\left((\tilde{u},\tilde{v}), \left(\tilde{u},\tilde{v}+\dfrac{2}{\sqrt{n}}\right)\right) 
= \dfrac{n-|A| - \tilde{v}\sqrt{n}}{4} \cdot \left(1 + \tanh\left(\frac{\beta}{\sqrt{n}} \cdot (\tilde{u}+\tilde{v})\right)\right) + O(1) \\
& q\left((\tilde{u},\tilde{v}), \left(\tilde{u},\tilde{v}-\dfrac{2}{\sqrt{n}}\right)\right) 
= \dfrac{n-|A| + \tilde{v}\sqrt{n}}{4} \cdot \left(1 - \tanh\left(\frac{\beta}{\sqrt{n}} \cdot (\tilde{u}+\tilde{v})\right)\right) + O(1)
\end{align*}
\subsection{Convergence to a Diffusion}
In this subsection, we prove that the rescaled version $(\tilde{U}_t^{(n)},\tilde{V}_t^{(n)})$ converges in distribution to an Ornstein-Uhlenbeck process.
\begin{prop}\label{converg-diff}
Assume that $|A_n|/n\to\lambda\in[1/2,1)$ and $(\tilde{U}_0^{(n)},\tilde{V}_0^{(n)})=(\tilde{u}_n,\tilde{v}_n)\to (u,v)$. Then, for all $t\ge0$,
$$\begin{pmatrix}
\tilde{U}_t^{(n)}\\
\tilde{V}_t^{(n)}
\end{pmatrix}\xrightarrow[n\to\infty]{(d)}D_t\sim N(\mu_t,\Sigma_t),$$
where
$$\mu_t=e^{tA}\begin{pmatrix}
u\\v
\end{pmatrix}\ \ \text{and}\ \ \Sigma_t=\int_0^te^{sA}BB^{\top}e^{sA^\top}\ \text{d}s,$$
and the matrices $A,B$, respectively are
\begin{equation}\label{def-A-B}
A=\begin{pmatrix}
-1+\lambda\beta&\lambda\beta\\(1-\lambda)\beta&-1+(1-\lambda)\beta
\end{pmatrix}\ \ \text{and}\ \ 
B=\begin{pmatrix}
\sqrt{2\lambda}&0\\0&\sqrt{2(1-\lambda)}
\end{pmatrix}.
\end{equation}
\end{prop}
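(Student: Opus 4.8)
The plan is to prove convergence to the Ornstein--Uhlenbeck diffusion via the standard generator-convergence machinery (e.g. Ethier--Kurtz), by showing that the generators $\mathcal{L}_n$ of the rescaled chains $(\tilde U^{(n)}_t,\tilde V^{(n)}_t)$ converge, uniformly on compacts, to the generator $\mathcal{L}$ of the limiting diffusion, together with a tightness argument. First I would compute $\mathcal{L}_n\varphi$ for a fixed $\varphi\in C_c^\infty(\R^2)$ (or $C^3$ with bounded derivatives): using the transition rates listed for $(\tilde U^{(n)},\tilde V^{(n)})$, the increments are $\pm 2/\sqrt n$ in each coordinate, so Taylor expanding $\varphi$ to second order gives
\begin{align*}
\mathcal{L}_n\varphi(\tilde u,\tilde v)
&= \big(q^{+}_u-q^{-}_u\big)\tfrac{2}{\sqrt n}\,\partial_1\varphi
 + \big(q^{+}_v-q^{-}_v\big)\tfrac{2}{\sqrt n}\,\partial_2\varphi\\
&\quad + \big(q^{+}_u+q^{-}_u\big)\tfrac{2}{n}\,\partial_{11}\varphi
 + \big(q^{+}_v+q^{-}_v\big)\tfrac{2}{n}\,\partial_{22}\varphi
 + O\!\big(n^{-1/2}\big),
\end{align*}
where $q^{\pm}_u,q^{\pm}_v$ denote the four rates. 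The point is that the drift terms, which a priori carry a factor $\sqrt n$, are exactly cancelled: using $\tanh(\beta x/\sqrt n) = \beta x/\sqrt n + O(n^{-3/2})$ and $|A_n|/n\to\lambda$, one gets $q^{+}_u-q^{-}_u = -\tfrac{\sqrt n}{2}\tilde u + \tfrac{|A_n|}{2}\tanh(\tfrac{\beta}{\sqrt n}(\tilde u+\tilde v)) + O(1) = \tfrac{\sqrt n}{2}\big((-1+\lambda\beta)\tilde u + \lambda\beta\,\tilde v\big) + o(\sqrt n)$, and similarly for the $v$-coordinate, producing exactly the matrix $A$. For the diffusion coefficients, $q^{+}_u+q^{-}_u = |A_n|/2 + O(\sqrt n)$, so $(q^{+}_u+q^{-}_u)\cdot \tfrac{2}{n} \to \lambda$, giving the $(1,1)$ entry $2\lambda$ of $BB^\top$; the $v$-coordinate gives $2(1-\lambda)$, and there is no cross term since the two coordinates never jump simultaneously. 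Hence $\mathcal{L}_n\varphi \to \mathcal{L}\varphi$ locally uniformly, where $\mathcal{L}\varphi(w) = (Aw)\cdot\nabla\varphi(w) + \tfrac12\sum_{i,j}(BB^\top)_{ij}\partial_{ij}\varphi(w)$ is the generator of the OU process $dD_t = AD_t\,dt + B\,dW_t$, whose law at time $t$ started from $(u,v)$ is precisely $N(e^{tA}(u,v)^\top,\ \int_0^t e^{sA}BB^\top e^{sA^\top}ds)$ — the stated $\mu_t,\Sigma_t$.

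With generator convergence in hand, I would invoke the standard result (Ethier--Kurtz, Theorem 4.8.2 / Corollary 4.8.9, or Stroock--Varadhan) that generator convergence plus well-posedness of the limiting martingale problem plus convergence of initial conditions $(\tilde u_n,\tilde v_n)\to(u,v)$ yields weak convergence of the processes in the Skorokhod space $D([0,\infty),\R^2)$, and in particular the one-dimensional marginal convergence $(\tilde U^{(n)}_t,\tilde V^{(n)}_t)\xrightarrow{(d)} D_t$ claimed. The martingale problem for a linear-drift, constant-diffusion SDE is classically well-posed (the solution is the explicit Gaussian OU process), so that hypothesis is free. To apply the theorem cleanly one also needs the chains not to explode and a compact-containment / tightness condition; here I would either cite the general criterion or argue directly that $\sup_{s\le t}\E[\|(\tilde U^{(n)}_s,\tilde V^{(n)}_s)\|^2]$ stays bounded — this follows from a Gr\"onwall argument on $\tfrac{d}{dt}\E[\|(\tilde U^{(n)}_t,\tilde V^{(n)}_t)\|^2]$ using Proposition~\ref{make-it-diff} and the rate estimates, exactly in the spirit of Lemma~\ref{tech-1} — which gives both non-explosion and the compact containment condition.

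The main technical obstacle is the bookkeeping in the generator expansion: one must carefully track that all the nominally $O(\sqrt n)$ contributions to the drift combine into the finite matrix $A$ (this is where the scaling $\beta/n$ of the inverse temperature, equivalently $\beta/\sqrt n$ after rescaling, and the hypothesis $|A_n|/n\to\lambda$ are both essential), and that the $O(1)$ error terms in the rates and the $O(n^{-1/2})$ Taylor remainder are genuinely negligible uniformly over $(\tilde u,\tilde v)$ in a compact set. A secondary point requiring a little care is that the state space $\mathcal{S}_n$ is a growing discrete lattice, so strictly speaking one should check the convergence for test functions in a core of $\mathcal{L}$ (smooth compactly supported functions suffice) and handle the fact that $\mathcal{L}_n$ acts on functions restricted to $\mathcal{S}_n$; this is routine. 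I do not expect tightness itself to be subtle given the moment bound, so the heart of the proof is really the one clean computation that $\mathcal{L}_n \to \mathcal{L}$ with the specific $A$ and $B$ of \eqref{def-A-B}.
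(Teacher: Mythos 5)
Your proposal is correct and follows essentially the same route as the paper. The paper invokes Theorem~8.7.1 of Durrett's \emph{Stochastic Calculus}, which packages precisely the generator-convergence checks you carry out: your drift and second-order Taylor computations match the paper's verification of conditions~1 and~2 of that theorem, and your remarks on tightness and jump sizes correspond to condition~3 (trivially satisfied since jumps are $O(n^{-1/2})$) together with the Lipschitz hypothesis on $b(x)=Ax$ and $\sigma\equiv B$, which is automatic here.
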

In the proof of Proposition \ref{converg-diff}, we will use the following Theorem from \cite{Dur}:
\begin{thm}[Theorem 8.7.1 from \cite{Dur}]\label{conv-to-diff}
For each $n\in\N$, let $(Z_t^{(n)})_{t\ge0}$ be a continuous-time Markov chain on a finite state space $\mathcal{S}_n\subseteq\R^d$ with transition-rate matrices $(q_n(x,y))_{x,y\in\mathcal{S}_n}$. Suppose there are Lipschitz functions $b:\R^d\to\R^d, \sigma:\R^d\to\R^{d\times d}$ such that: For any sequence $x_n\in\mathcal{S}_n$ such that $x_n\to x\in\R^d$,
\begin{enumerate}
\item 
$\lim\limits_{n\to\infty}\SL{y\in\mathcal{S}_n}(y-x_n)\cdot q(x_n,y)\mathbf{1}_{|x_n-y|\le1}=b(x).$
\item 
$\lim\limits_{n\to\infty}\SL{y\in\mathcal{S}_n}(y-x_n)(y-x_n)^\top\cdot q_n(x_n,y)\mathbf{1}_{|x_n-y|\le1}=\sigma(x)\sigma(x)^\top.$
\item 
For all $R>0$ and $\varepsilon>0$,
$$\sup\limits_{|x|\le R}\ \SL{y:|y-x|\ge\varepsilon}q_n(x_n,y)\xrightarrow[n\to\infty]{}0.$$
\end{enumerate}
If $Z_0^{(n)}=z_n\to z$, then, for all $T>0$,
$$(Z_t^{(n)})_{t\in[0,T]}\xrightarrow[n\to\infty]{d}(D_t)_{t\in[0,T]},$$
where $(D_t)_{t\ge0}$ is the solution to the martingale problem
\begin{equation}\label{mtg-pb}
\text{d}D_t=b(D_t)\text{d}t+\sigma(D_t)\text{d}B_t
\end{equation}
with $D_0=z$ and $(B_t)_{t\ge0}$ being a $d$-dimensional typical Brownian motion.
\end{thm}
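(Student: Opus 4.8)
The plan is to apply Theorem~\ref{conv-to-diff} directly to the rescaled chain $(\tilde U_t^{(n)},\tilde V_t^{(n)})$, which lives on $\mathcal{S}_n\subseteq\R^2$ (so $d=2$), with limiting drift $b(x)=Ax$ and diffusion coefficient $\sigma(x)=B$ for the matrices in \eqref{def-A-B}. Since $b$ is linear and $B$ is constant, both are globally Lipschitz, so the entire content of the argument reduces to (i) checking the three local conditions of the theorem and (ii) recognizing the resulting martingale problem as that of an Ornstein--Uhlenbeck process and reading off its one-time marginals.

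\textbf{Conditions (1) and (2).} Fix any sequence $x_n=(\tilde u_n,\tilde v_n)\in\mathcal{S}_n$ with $x_n\to(u,v)$. The chain has only four possible transitions from $x_n$, each of norm $2/\sqrt n\le 1$ once $n\ge4$, so the truncation indicator is identically $1$ and condition (3) below is trivial. Write $T_n:=\tanh\!\big(\tfrac{\beta}{\sqrt n}(\tilde u_n+\tilde v_n)\big)$; since the argument is $O(n^{-1/2})$ uniformly on compact sets, $T_n=\tfrac{\beta}{\sqrt n}(\tilde u_n+\tilde v_n)+O(n^{-3/2})$. Combining the two first-coordinate rates gives
\begin{align*}
\frac{2}{\sqrt n}\Big(q\big((\tilde u_n,\tilde v_n),(\tilde u_n+\tfrac{2}{\sqrt n},\tilde v_n)\big)&-q\big((\tilde u_n,\tilde v_n),(\tilde u_n-\tfrac{2}{\sqrt n},\tilde v_n)\big)\Big)\\
&=\frac{2}{\sqrt n}\Big(\frac{|A|}{2}T_n-\frac{\tilde u_n\sqrt n}{2}+O(1)\Big)\\
&=\frac{|A|}{n}\beta(\tilde u_n+\tilde v_n)-\tilde u_n+O(n^{-1/2})\xrightarrow[n\to\infty]{}\lambda\beta(u+v)-u,
\end{align*}
which is the first coordinate of $Ax$; the second coordinate is identical with $|A|$ replaced by $n-|A|$, giving $(1-\lambda)\beta(u+v)-v$, the second coordinate of $Ax$. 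For the second-moment matrix, transitions change only one coordinate, so the off-diagonal entries vanish identically, and the $(1,1)$-entry equals $\tfrac{4}{n}$ times the sum of the two first-coordinate rates, namely $\tfrac{4}{n}\big(\tfrac{|A|}{2}-\tfrac{\tilde u_n\sqrt n T_n}{2}+O(1)\big)=\tfrac{2|A|}{n}+O(1/n)\to 2\lambda$, since $\tilde u_n\sqrt n\,T_n=O(1)$; similarly the $(2,2)$-entry tends to $2(1-\lambda)$. Thus conditions (1) and (2) hold with $b(x)=Ax$ and $\sigma(x)\sigma(x)^\top=\bigl(\begin{smallmatrix}2\lambda&0\\0&2(1-\lambda)\end{smallmatrix}\bigr)=BB^\top$. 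Condition (3) is immediate: for any fixed $\varepsilon>0$, once $2/\sqrt n<\varepsilon$ there are no transitions of size $\ge\varepsilon$, so the supremum over $|x|\le R$ is exactly $0$ for all large $n$.

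\textbf{Identifying the limit.} Theorem~\ref{conv-to-diff} now gives $(\tilde U_t^{(n)},\tilde V_t^{(n)})_{t\in[0,T]}\to (D_t)_{t\in[0,T]}$ in distribution, where $D$ solves $\mathrm dD_t=A D_t\,\mathrm dt+B\,\mathrm dB_t$ with $D_0=(u,v)^\top$. This linear SDE has the explicit solution $D_t=e^{tA}(u,v)^\top+\int_0^t e^{(t-s)A}B\,\mathrm dB_s$, which is a Gaussian vector with mean $e^{tA}(u,v)^\top=\mu_t$ and covariance $\int_0^t e^{(t-s)A}BB^\top e^{(t-s)A^\top}\,\mathrm ds=\int_0^t e^{sA}BB^\top e^{sA^\top}\,\mathrm ds=\Sigma_t$ after substituting $s\mapsto t-s$. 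Restricting to a single time $t$ yields the stated convergence $(\tilde U_t^{(n)},\tilde V_t^{(n)})^\top\xrightarrow{(d)} N(\mu_t,\Sigma_t)$.

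\textbf{Main obstacle.} There is no deep difficulty; the only care required is bookkeeping the error terms. The additive $O(1)$ errors in the rescaled transition rates and the $O(n^{-3/2})$ Taylor error in $T_n$ must be tracked through the multiplications by $2/\sqrt n$ (for the drift) and $4/n$ (for the second moments), where they contribute $O(n^{-1/2})$ and $O(1/n)$ respectively and hence vanish in the limit. One should also note that these estimates, together with $|A|/n\to\lambda$, are uniform for $x_n$ ranging over a fixed compact subset of $\R^2$, which is exactly the regime in which Theorem~\ref{conv-to-diff} requires conditions (1)--(3) to hold.
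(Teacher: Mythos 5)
There is a fundamental mismatch here: the statement you were asked to prove is Theorem~\ref{conv-to-diff} itself, i.e.\ the general diffusion-approximation theorem (Theorem 8.7.1 of Durrett), which the paper does not prove but imports as a black box from \cite{Dur}. Your write-up instead \emph{applies} that theorem to the rescaled two-coordinate chain: you verify conditions (1)--(3) for $b(x)=Ax$, $\sigma(x)=B$, and then identify the one-time marginals of the limiting Ornstein--Uhlenbeck process. That is precisely the content and the method of the paper's proof of Proposition~\ref{converg-diff} (your rate computations, the observation that the off-diagonal second-moment entries vanish, and the trivial verification of condition (3) all match the paper's argument there, and your identification of $\mu_t,\Sigma_t$ via the explicit solution of the linear SDE agrees with the paper's appeal to \cite{KS}). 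As a proof of Theorem~\ref{conv-to-diff}, however, it is circular: the first sentence invokes the very theorem to be established.

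A genuine proof of the stated theorem would require a different and substantially longer argument: (a) tightness of the laws of $(Z^{(n)}_t)_{t\in[0,T]}$ in the Skorokhod space $D([0,T],\R^d)$, typically via the uniform local bounds on the truncated first and second jump moments supplied by conditions (1)--(2) together with the small-jump condition (3); (b) the identification of every subsequential limit as a solution of the martingale problem for the generator $Lf=b\cdot\nabla f+\tfrac12\sum_{i,j}(\sigma\sigma^\top)_{ij}\partial_i\partial_j f$, by passing to the limit in the compensated-jump martingales of the chains; and (c) uniqueness of that martingale problem, which is where the global Lipschitz hypothesis on $b$ and $\sigma$ enters (via strong existence and pathwise uniqueness for the SDE (\ref{mtg-pb})). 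None of these steps appears in your proposal. If the intended target was in fact Proposition~\ref{converg-diff}, then your argument is correct and essentially identical to the paper's; but as written it does not prove the cited theorem.
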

\begin{proof}[Proof of Proposition \ref{converg-diff}]
We prove that the process $((\tilde{U}_t^{(n)},\tilde{V}_t^{(n)}))_{t\ge0}$ satisfies conditions 1-3 of Theorem \ref{conv-to-diff} for the functions $b(x)=Ax$ and $\sigma(x)=B$, where $A$ and $B$ are the matrices defined in (\ref{def-A-B}).  Suppose $z_n=(\tilde{u}_n,\tilde{v}_n)$ is a sequence of 2-dimensional vectors with $(\tilde{u}_n,\tilde{v}_n)\to (\tilde{u},\tilde{v})=z$.\\\\
For condition 1, we have
\begin{align*}
&\sum_{y \in \mathcal{S}_n} 
(y - z_n) \cdot q(z_n, y) \mathbf{1}_{|z_n - y| \le 1}\\=\ & \left(\dfrac{2}{\sqrt{n}},0\right) \cdot\left(q\left((\tilde{u}_n,\tilde{v}_n),\left(\tilde{u}_n+\dfrac{2}{\sqrt{n}},\tilde{v}_n\right)\right) -q\left((\tilde{u}_n,\tilde{v}_n),\left(\tilde{u}_n-\dfrac{2}{\sqrt{n}},\tilde{v}_n\right)\right)\right)\\ +&\left(0,\dfrac{2}{\sqrt{n}}\right)\cdot\left(q\left((\tilde{u}_n,\tilde{v}_n),\left(\tilde{u}_n,\tilde{v}_n+\dfrac{2}{\sqrt{n}}\right)\right) -q\left((\tilde{u}_n,\tilde{v}_n),\left(\tilde{u}_n,\tilde{v}_n-\dfrac{2}{\sqrt{n}}\right)\right)\right) \\\\=\ &\begin{pmatrix}\dfrac{|A_n|}{\sqrt{n}}\cdot\tanh\left(\dfrac{\beta}{\sqrt{n}}\cdot(\tilde{u}_n+\tilde{v}_n)\right)-\tilde{u}_n+O\left(\dfrac{1}{\sqrt{n}}\right)\\\\\dfrac{n-|A_n|}{\sqrt{n}}\cdot\tanh\left(\dfrac{\beta}{\sqrt{n}}\cdot(\tilde{u}_n+\tilde{v}_n)\right)-\tilde{v}_n+O\left(\dfrac{1}{\sqrt{n}}\right)\end{pmatrix}\\\\\xrightarrow[n\to\infty]{}&\begin{pmatrix}\lambda\beta\cdot(\tilde{u}+\tilde{v})-\tilde{u}\\\\(1-\lambda)\beta\cdot(\tilde{u}+\tilde{v})-\tilde{v}\end{pmatrix}=Az.
\end{align*}
For condition 2, we have
\begin{align*}
&\sum_{y \in \mathcal{S}_n} 
(y - z_n)(y - z_n)^\top \cdot q(z_n, y) \mathbf{1}_{|z_n - y| \le 1}\\=\ & 
\begin{pmatrix}
\frac{4}{n} & 0\\
0 & 0\\
\end{pmatrix}
 \cdot\left(q\left((\tilde{u}_n,\tilde{v}_n),\left(\tilde{u}_n+\dfrac{2}{\sqrt{n}},\tilde{v}_n\right)\right) +q\left((\tilde{u}_n,\tilde{v}_n),\left(\tilde{u}_n-\dfrac{2}{\sqrt{n}},\tilde{v}_n\right)\right)\right)\\ +&\begin{pmatrix}
0 & 0\\
0 & \frac{4}{n}\\
\end{pmatrix}\cdot\left(q\left((\tilde{u}_n,\tilde{v}_n),\left(\tilde{u}_n,\tilde{v}_n+\dfrac{2}{\sqrt{n}}\right)\right)+q\left((\tilde{u}_n,\tilde{v}_n),\left(\tilde{u}_n,\tilde{v}_n-\dfrac{2}{\sqrt{n}}\right)\right)\right) \\\\=\ &
\begin{pmatrix} 
\frac{2}{n}\left(|A_n|-\tilde{u}_n\tanh\left(\frac{\beta (\tilde{u}_n+\tilde{v}_n)}{ \sqrt{n}}\right) + O(1)\right) & 0 \\\\
0 & \frac{2}{n}\left(n-|A_n|-\tilde{v}_n\tanh\left(\frac{\beta (\tilde{u}_n+\tilde{v}_n)}{ \sqrt{n}}\right) + O(1)\right)
\end{pmatrix}\\\xrightarrow[n\to\infty]{}\ &\begin{pmatrix}
2\lambda & 0\\
0 & 2(1-\lambda)\\
\end{pmatrix}=BB^\top.
\end{align*}
Condition 3 is satisfied trivially.\\\\
It now suffices to understand the distribution of the strong solution $(D_t)_{t\ge0}$ at each time $t$, for the specific $b,\sigma$ that we found. Since this diffusion is a two-dimensional Ornstein-Uhlenbeck process, it is well-known (see, for example, \cite{KS}, chapter 5.6) that the distribution at time $t>0$, if the starting point is $z=D_0\in\R^2$, is $N(\mu_t,\Sigma_t)$, with
$$\mu_t=e^{tA}D_0\ \ \text{and}\ \ \Sigma_t=\int_0^te^{sA}BB^{\top}e^{sA^\top}\ \text{d}s.$$
The proof is complete.
\end{proof}
\subsection{TV distance of Gaussians}
The goal of this subsection is to understand the total variation distance between Gaussian random vectors and perform calculations that will be important later. From now on, we will denote by $\Sigma$ the covariance matrix $\Sigma_\infty:=\lim\limits_{t\to\infty}\Sigma_t$.
\begin{lem}\label{small-tv}
\begin{enumerate}
\item 
Let $z,z'\in\R^2$ be two vectors such that $\lVert z-z'\rVert_2\le c_1\cdot\varepsilon\cdot e^{(1-\beta)C}$, for some absolute constant $c_1>0$. Then, for every $\eta,\theta_0>0$, there exist $\varepsilon,C>0$ so that
$$\sup\limits_{|\theta|\le\theta_0}\left\lVert N\left(e^{(C+\theta)A}z',\Sigma_{C+\theta}\right)-N\left(e^{(C+\theta)A}z,\Sigma\right)\right\rVert_{\text{TV}}<\eta.$$
\item 
Let $z,z'\in\R^2$ be two vectors such that $\lVert z-z'\rVert_2\le2M_2$. Then, for every $\eta,\theta_0,M_2>0$, there exists $C>0$ so that
$$\sup\limits_{|\theta|\le\theta_0}\left\lVert N(e^{(C+\theta)A}z',\Sigma_{C+\theta})-N(e^{(C+\theta)A}z,\Sigma)\right\rVert_{\text{TV}}<\eta.$$
\item 
Let $\mu_1,\mu_2\in\R^2$. Then,
$$\left\lVert N(\mu_1,\Sigma)-N(\mu_2,\Sigma)\right\rVert_{\text{TV}}\le c\lVert\mu_2-\mu_1\rVert_{\text{TV}}.$$
\end{enumerate}
\end{lem}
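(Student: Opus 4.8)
I would prove the three parts in the order (3), (1), (2): parts (1) and (2) both reduce, via the triangle inequality, to part (3) together with two elementary facts about the drift matrix $A$ and the family $\{\Sigma_t\}$. So the first step is to establish those facts.

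\textbf{Part (3).} The starting point is to record the spectral data of $A$. A direct computation gives $\operatorname{tr}A=-2+\beta$ and $\det A=1-\beta$, so the eigenvalues of $A$ are the roots of $x^2+(2-\beta)x+(1-\beta)$, namely $-(1-\beta)=-\mu$ and $-1$. In particular $A$ is Hurwitz with spectral abscissa $-\mu$, so $\Sigma=\Sigma_\infty$ is well defined, and since $BB^\top=\operatorname{diag}(2\lambda,2(1-\lambda))$ is positive definite for $\lambda\in[1/2,1)$, so is $\Sigma$. Now the total variation distance between $N(\mu_1,\Sigma)$ and $N(\mu_2,\Sigma)$ is invariant under the affine change of variables $x\mapsto\Sigma^{-1/2}x$, hence equals the total variation distance between two unit‑covariance Gaussians whose means differ by $w:=\Sigma^{-1/2}(\mu_2-\mu_1)$; by rotational symmetry this is the one‑dimensional quantity $2\Phi(\lVert w\rVert_2/2)-1$, which is at most $\lVert w\rVert_2/\sqrt{2\pi}$. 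Since $\lVert w\rVert_2\le\lVert\Sigma^{-1/2}\rVert_{\mathrm{op}}\lVert\mu_2-\mu_1\rVert_2$, part (3) follows with $c=\lVert\Sigma^{-1/2}\rVert_{\mathrm{op}}/\sqrt{2\pi}$.

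\textbf{The split for (1) and (2).} Writing $t=C+\theta$, the triangle inequality bounds the quantity in question by
$$\left\lVert N(e^{tA}z',\Sigma_t)-N(e^{tA}z',\Sigma)\right\rVert_{\mathrm{TV}}+\left\lVert N(e^{tA}z',\Sigma)-N(e^{tA}z,\Sigma)\right\rVert_{\mathrm{TV}}.$$
The first term is a total variation distance between two Gaussians with the same mean; by translation invariance it equals $\lVert N(0,\Sigma_t)-N(0,\Sigma)\rVert_{\mathrm{TV}}$. Since $\Sigma-\Sigma_t=\int_t^\infty e^{sA}BB^\top e^{sA^\top}\,\mathrm{d}s$ and $\lVert e^{sA}\rVert_{\mathrm{op}}\le Ke^{-\mu s}$ (as $A$ is diagonalizable with spectral abscissa $-\mu$), we get $\sup_{|\theta|\le\theta_0}\lVert\Sigma_t-\Sigma\rVert_{\mathrm{op}}\le K'e^{-2\mu(C-\theta_0)}\to0$ as $C\to\infty$; for $C>\theta_0$ the matrices $\Sigma_t$ are moreover nondegenerate, so by continuity of the Gaussian density in its covariance — made quantitative, if one prefers, through Pinsker's inequality and the explicit formula for the Kullback--Leibler divergence between centered Gaussians, which is $O(\lVert\Sigma_t-\Sigma\rVert^2)$ — this forces $\sup_{|\theta|\le\theta_0}\lVert N(0,\Sigma_t)-N(0,\Sigma)\rVert_{\mathrm{TV}}\to0$. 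The second term is, by part (3), at most $c\lVert e^{tA}(z'-z)\rVert_2\le cK\,e^{-\mu(C-\theta_0)}\lVert z'-z\rVert_2$.

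\textbf{Combining, and the main obstacle.} For part (1) the hypothesis gives $\lVert z'-z\rVert_2\le c_1\varepsilon e^{(1-\beta)C}=c_1\varepsilon e^{\mu C}$, so the second term is at most $cKc_1e^{\mu\theta_0}\varepsilon$: the factors $e^{\mu C}$ and $e^{-\mu C}$ cancel exactly, which is precisely why the hypothesis of (1) carries the exponent $1-\beta$. Thus I would first fix $C$ so large that the first term is below $\eta/2$ for all $|\theta|\le\theta_0$, and then pick $\varepsilon$ so small that the second term is below $\eta/2$. For part (2), where $\lVert z'-z\rVert_2\le2M_2$ is already bounded independently of $C$, the second term is at most $2cKM_2e^{-\mu(C-\theta_0)}$, so a single sufficiently large choice of $C$ makes both terms below $\eta/2$. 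The only genuinely nontrivial input is the first term: one must pin down the decay rate $e^{-\mu s}$ of $e^{sA}$ — it is no accident that this matches the exponent in the hypothesis of (1) — and establish the (ideally quantitative) continuity of the total variation distance between centered Gaussians as a function of the covariance near the nondegenerate limit $\Sigma$; everything else is bookkeeping.
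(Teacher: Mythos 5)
Your proof is correct, and it follows a genuinely different decomposition from the paper's. The paper applies the explicit Kullback--Leibler formula \eqref{KL-Gaus} directly to the full pair $N(e^{(C+\theta)A}z',\Sigma_{C+\theta})$ vs.\ $N(e^{(C+\theta)A}z,\Sigma)$ — where both the means and the covariances differ — and then bounds the trace, log-determinant, and quadratic-mean terms one by one before invoking Pinsker. You instead split by the triangle inequality through the intermediate measure $N(e^{(C+\theta)A}z',\Sigma)$, isolating a same-mean/different-covariance piece and a same-covariance/different-mean piece. For the mean piece you use the exact one-dimensional formula $2\Phi(\lVert\Sigma^{-1/2}(\mu_2-\mu_1)\rVert_2/2)-1$ (which is the Barsov--Ulyanov identity \eqref{tv-gaus} that the paper reserves for Lemma \ref{Def-Psi}) together with concavity of $\Phi$; for the covariance piece you still fall back on Pinsker plus the centered-Gaussian KL formula, exactly as the paper does for its covariance terms. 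The underlying technical inputs — the spectral bound $\lVert e^{tA}\rVert_{\mathrm{OP}}\le Ke^{-(1-\beta)t}$ from diagonalizability, the estimate $\lVert\Sigma_{C+\theta}-\Sigma\rVert_{\mathrm{OP}}=O(e^{-2(1-\beta)(C-\theta_0)})$, and the cancellation $e^{\mu C}\cdot e^{-\mu C}$ in part (1) — are identical to the paper's. Your split buys conceptual clarity (two causes of discrepancy handled separately, and part (3) applied as a black box to part (1) and (2)) and a slightly sharper explicit constant in part (3), at the cost of a triangle-inequality factor of two that is immaterial for the qualitative statement. One cosmetic point worth flagging: in part (3) the displayed right-hand side in the lemma statement reads $c\lVert\mu_2-\mu_1\rVert_{\text{TV}}$, but both your argument and the paper's actually bound it by a constant times the Euclidean norm $\lVert\mu_2-\mu_1\rVert_2$; that is plainly a typo in the statement rather than a gap in either proof.
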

Before we move on to the proof of this Lemma, we mention and prove some linear algebra facts that will be crucial for this proof.\\\\
The eigenvalues of $A$ are $-1$ and $-(1-\beta)$, which are distinct, therefore it is diagonalizable. Set $A=P\cdot\text{diag}(-(1-\beta),-1)\cdot P^{-1}$. Then, $e^{tA}=P\cdot\text{diag}(e^{-(1-\beta)t},e^{-t})\cdot P^{-1}$, which implies that
\begin{equation}\label{norm-of-exp}
\lVert e^{At}\rVert_{\text{OP}}\le Ke^{-(1-\beta)t},
\end{equation}
for some constant $K>0$, for any $t>0$. Using this inequality, we can prove that $\Sigma_{C+\theta}$ and $\Sigma$ are close:
\begin{align*}
\lVert\Sigma_{C+\theta}-\Sigma\rVert_{\text{OP}}&\le\int_{C+\theta}^\infty\left\lVert e^{sA}\right\rVert_{\text{OP}}\cdot\lVert BB^\top\rVert_{\text{OP}}\cdot\left\lVert e^{sA^\top}\right\rVert_{\text{OP}}\ \text{d}s
\\&\le c_2\int_{C+\theta}^\infty K^2e^{-2(1-\beta)s}\ \text{d}s\\&=O(e^{-2(1-\beta)C}).
\end{align*}
Now, let $Y\in\R^{2\times2}$ be a matrix with $\lVert Y\rVert_{\text{OP}}=\rho<1/2$. Then, both eigenvalues $\lambda_1,\lambda_2$ of $Y$ are, in absolute value, at most $\rho$, therefore,
\begin{equation}\label{ineq-tr-det}
|\text{tr}(Y)|=|\lambda_1+\lambda_2|\le2\rho\ \ \text{and}\ \ |\det(I+Y)-1|=|\lambda_1+\lambda_2-\lambda_1\lambda_2|\le3\rho.
\end{equation}
\begin{proof}[Proof of Lemma \ref{small-tv}]
We prove the first statement for a fixed $\theta$, but it is not hard to see that if $\theta$ is restricted to a bounded set in $\R$, everything we do holds uniformly in this bounded set.\\\\
For two distributions $P,Q$ on $\R^2$, let $D_{KL}(P||Q)$ be the Kullback-Leibler divergence from $P$ to $Q$, defined as
$$D_{KL}(P||Q):=\int_{\R^2}p(x)\log\dfrac{p(x)}{q(x)}\ \text{d}x.$$
If the $P,Q$ are Gaussian vectors, $P=N(\mu_1,\Sigma_1), Q=N(\mu_2,\Sigma_2)$, we know that
\begin{equation}\label{KL-Gaus}
D_{KL}(P||Q)=\dfrac{1}{2}\left(\text{tr}(\Sigma_2^{-1}\Sigma_1)-2+(\mu_2-\mu_1)^\top\Sigma_2^{-1}(\mu_2-\mu_1)-\log\dfrac{\det(\Sigma_1)}{\det(\Sigma_2)}\right).
\end{equation}
We apply this equality for $\mu_1\to e^{(C+\theta)A}z', \mu_2\to e^{(C+\theta)A}z, \Sigma_1\to \Sigma_{C+\theta}, \Sigma_2\to\Sigma$, and bound the terms one-by-one.\\\\
Let $\Delta=\Sigma_{C+\theta}-\Sigma$, with $\lVert\Delta\rVert_{\text{OP}}=O(e^{-2(1-\beta)C})$, as we have already explained. Then, keeping (\ref{ineq-tr-det}) in mind,
$$|\text{tr}(\Sigma^{-1}\Sigma_{C+\theta})-2|=|\text{tr}(\Sigma^{-1}(\Sigma+\Delta))-2|=|\text{tr}(\Sigma^{-1}\Delta)|\le2\lVert\Sigma^{-1}\Delta\rVert_{\text{OP}}=O(e^{-2(1-\beta)C}),$$
because the operator norm is sub-multiplicative.\\\\
Also, using (\ref{ineq-tr-det}) and working similarly as above, if $C$ is large enough,
$$\left|\dfrac{\det(\Sigma_{C+\theta})}{\det(\Sigma)}-1\right|=|\det(1+\Sigma^{-1}\Delta)-1|\le3\lVert\Sigma^{-1}\Delta\rVert_{\text{OP}}=O(e^{-2(1-\beta)C}).$$
Therefore, if $C>0$ is large enough,
$$\left|\log\dfrac{\det(\Sigma_{C+\theta})}{\det(\Sigma)}\right|\le2\left|\dfrac{\det(\Sigma_{C+\theta})}{\det(\Sigma)}-1\right|=O(e^{-2(1-\beta)C}),$$
since the ratio of the determinants can be arbitrarily close to 1.\\\\
We turn to the last term of the Right Hand Side of (\ref{KL-Gaus}). Observe that due to the initial condition and (\ref{norm-of-exp}),
$$\left\lVert e^{(C+\theta)A}z'-e^{(C+\theta)A}z\right\rVert_{2}\le\left\lVert e^{(C+\theta)A}\right\rVert_{\text{OP}}\cdot\lVert z-z'\rVert_2=O(\varepsilon).$$
This implies that
\begin{align*}
&\left|(e^{(C+\theta)A}z'-e^{(C+\theta)A}z)^\top\Sigma^{-1}(e^{(C+\theta)A}z'-e^{(C+\theta)A}z)\right|\\\le\ &\lVert\Sigma^{-1}\rVert_{\text{OP}}\cdot\left\lVert e^{(C+\theta)A}z'-e^{(C+\theta)A}z\right\rVert_{2}^2=O(\varepsilon^2).
\end{align*}
By Pinsker's inequality, we know that
$$\lVert P-Q\rVert_{\text{TV}}\le\sqrt{\dfrac{1}{2}D_{KL}(P||Q)},$$
so its application for the case of the Gaussian random vectors in question implies that
$$\left\lVert N\left(e^{(C+\theta)A}z',\Sigma_{C+\theta}\right)-N\left(e^{(C+\theta)A}z,\Sigma\right)\right\rVert_{\text{TV}}\le\sqrt{O(e^{-2(1-\beta)C})+O(\varepsilon^2)},$$
which is what we wanted to prove.\\\\
The second claim can be proven in the exact same way, with the only difference being that in this setup,
$$\left\lVert e^{(C+\theta)A}z'-e^{(C+\theta)A}z\right\rVert_2=O(e^{-(1-\beta)C}),$$
instead of $O(\varepsilon)$.\\\\
The third claim follows directly from (\ref{KL-Gaus}) and Pinsker's inequality.
\end{proof}
Next, we calculate the TV distances between the Gaussian random vectors that arise in this problem. First, we find $\Sigma$. It is well-known (see \cite{KS}, section 5.6 for details) that 
\begin{equation}\label{Lyapunov}
A\Sigma+\Sigma A^\top=-BB^\top
\end{equation}
and that this equation has a unique solution for each $B$ (see section 7.2 of \cite{Bhatia} for details). Once we show that the matrix
\begin{equation}\label{sigma-def}
\Sigma=\begin{pmatrix}
\frac{\lambda(1-(1-\lambda)\beta)}{1-\beta}&\frac{\lambda\beta(1-\lambda)}{1-\beta}\\\frac{\lambda\beta(1-\lambda)}{1-\beta}&\frac{(1-\lambda)(1-\lambda\beta)}{1-\beta}
\end{pmatrix}
\end{equation}
satisfies (\ref{Lyapunov}), this will indeed be the covariance matrix $\Sigma$. One can check that
$$A\Sigma=\Sigma A^\top=\begin{pmatrix}
-\lambda&0\\0&-(1-\lambda)
\end{pmatrix},$$
which clearly implies that (\ref{Lyapunov}) is satisfied. Also, observe that
\begin{align*}
A\begin{pmatrix}
\lambda\\1-\lambda
\end{pmatrix}&=\begin{pmatrix}
\lambda(-1+\lambda\beta)+(1-\lambda)\lambda\beta\\\lambda(1-\lambda)\beta+(1-\lambda)(-1+(1-\lambda)\beta)
\end{pmatrix}\\&=\begin{pmatrix}
-(1-\beta)\lambda\\-(1-\beta)(1-\lambda)
\end{pmatrix}=-(1-\beta)\begin{pmatrix}
\lambda\\1-\lambda
\end{pmatrix},
\end{align*}
so the vector $\begin{pmatrix}
    \lambda\\1-\lambda
\end{pmatrix}$
is an eigenvector for the eigenvalue $-(1-\beta)$ of $A$. Therefore, for any $\theta\in\R$, we can set
\begin{align}\label{mean}
\mu_\lambda(\theta)=e^{(C+\theta)A}\begin{pmatrix}
\alpha_1(\lambda,C)\\\alpha_2(\lambda,C)
\end{pmatrix}&=\alpha(\lambda,C)e^{(C+\theta)A}\begin{pmatrix}
    \lambda\\1-\lambda
\end{pmatrix}\notag\\&=g(2\lambda-1)e^{-(1-\beta)\theta+c(\beta)}\begin{pmatrix}
\lambda\\1-\lambda
\end{pmatrix}.
\end{align}
Because of this expression, it is clear that for any $\lambda$, the function $\theta\mapsto\mu_\lambda(\theta)$ is continuous.
\begin{lem}\label{Def-Psi}
For any $\theta\in\R, \lambda\in(1/2,1)$,
\begin{align*}
\lVert N(\mu_\lambda(\theta),\Sigma)-N(0,\Sigma)\rVert_{\text{TV}}&=\left\lVert N\left(g(2\lambda-1)\cdot e^{-(1-\beta)\theta+c(\beta)},\dfrac{1}{1-\beta}\right)-N\left(0,\dfrac{1}{1-\beta}\right)\right\rVert_{\text{TV}}\\&=:\Psi(\lambda,\theta).
\end{align*}
\end{lem}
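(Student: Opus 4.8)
The plan is to use the standard fact that the total-variation distance between two Gaussians sharing a common covariance matrix is a function only of the Mahalanobis distance between their means, and then to compute that Mahalanobis distance explicitly using the eigenstructure of $A$ and the identity $A\Sigma=\operatorname{diag}(-\lambda,-(1-\lambda))$ recorded just above the lemma.

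\textbf{Step 1: reduction to one dimension.} I would first record the elementary fact that for any $\mu_1,\mu_2\in\R^2$ and any positive-definite $\Sigma$,
$$\lVert N(\mu_1,\Sigma)-N(\mu_2,\Sigma)\rVert_{\mathrm{TV}}=\lVert N(\delta,1)-N(0,1)\rVert_{\mathrm{TV}},\qquad \delta^2:=(\mu_1-\mu_2)^\top\Sigma^{-1}(\mu_1-\mu_2).$$
Indeed, total variation is invariant under the linear bijection $x\mapsto\Sigma^{-1/2}x$, which turns the two laws into $N(\Sigma^{-1/2}\mu_1,I)$ and $N(\Sigma^{-1/2}\mu_2,I)$; a rotation sending $\Sigma^{-1/2}(\mu_1-\mu_2)$ to a multiple of the first coordinate axis preserves $N(0,I)$, and in these coordinates the two laws differ only in the first coordinate (they share mean and covariance in the orthogonal directions), so by tensorization of $\mathrm{TV}$ we are left with the one-dimensional comparison of $N(\lVert\Sigma^{-1/2}(\mu_1-\mu_2)\rVert_2,1)$ with $N(0,1)$. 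The same argument in dimension one gives $\lVert N(a,v)-N(0,v)\rVert_{\mathrm{TV}}=\lVert N(a/\sqrt v,1)-N(0,1)\rVert_{\mathrm{TV}}$, and this quantity depends on $a$ only through $|a|$; note $\Sigma$ is genuinely positive definite here since $\det\Sigma=\lambda(1-\lambda)/(1-\beta)>0$ and its trace is positive.

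\textbf{Step 2: the Mahalanobis distance, and conclusion.} Here $\mu_1=\mu_\lambda(\theta)$ and $\mu_2=0$, and by (\ref{mean}) we have $\mu_\lambda(\theta)=s\,v_0$ with $s:=g(2\lambda-1)e^{-(1-\beta)\theta+c(\beta)}$ and $v_0:=\binom{\lambda}{1-\lambda}$. The excerpt establishes $Av_0=-(1-\beta)v_0$ and $A\Sigma=M$ where $M:=\operatorname{diag}(-\lambda,-(1-\lambda))$; since $A$ is invertible this gives $\Sigma^{-1}=M^{-1}A$, hence
$$\Sigma^{-1}v_0=M^{-1}Av_0=-(1-\beta)M^{-1}v_0=(1-\beta)\binom{1}{1},$$
so $v_0^\top\Sigma^{-1}v_0=(1-\beta)(\lambda+(1-\lambda))=1-\beta$ and therefore $\delta^2=\mu_\lambda(\theta)^\top\Sigma^{-1}\mu_\lambda(\theta)=s^2(1-\beta)$. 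By Step 1 the left-hand side of the lemma equals $\lVert N(|s|\sqrt{1-\beta},1)-N(0,1)\rVert_{\mathrm{TV}}$, while the one-dimensional part of Step 1 with $v=\tfrac1{1-\beta}$ shows the right-hand side equals $\lVert N(s\sqrt{1-\beta},1)-N(0,1)\rVert_{\mathrm{TV}}$; these coincide because the one-dimensional TV distance to $N(0,1)$ depends only on the absolute value of the mean. This proves the identity and we set it to be $\Psi(\lambda,\theta)$.

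I do not expect a serious obstacle: the only points requiring care are the clean justification of the dimension reduction in Step 1 (invariance of $\mathrm{TV}$ under affine bijections and the factoring-out of the directions orthogonal to $\mu_1-\mu_2$) and, more bookkeeping than difficulty, correctly exploiting $A\Sigma=M$ to evaluate $v_0^\top\Sigma^{-1}v_0$ rather than inverting $\Sigma$ by hand.
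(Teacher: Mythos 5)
Your proof is correct and lands on exactly the same quantity as the paper's, namely the Mahalanobis distance $\mu_\lambda(\theta)^\top\Sigma^{-1}\mu_\lambda(\theta)$, but you reach it by a genuinely different route on both steps. For the reduction, the paper simply cites the Barsov--Ulyanov formula $\lVert N(\mu,\Sigma)-N(0,\Sigma)\rVert_{\mathrm{TV}}=2\Phi\bigl(\tfrac12\sqrt{\mu^\top\Sigma^{-1}\mu}\bigr)-1$ from reference \cite{B-U}, which makes the statement immediate; you instead prove the dimension reduction from scratch via affine invariance of total variation and tensorization, which is more self-contained and avoids relying on an external formula. For the computation, the paper writes down $\Sigma^{-1}$ explicitly from the given $\Sigma$ in (\ref{sigma-def}) and multiplies through, whereas you exploit the Lyapunov-type identity $A\Sigma=\operatorname{diag}(-\lambda,-(1-\lambda))$ to get $\Sigma^{-1}=\operatorname{diag}(-\lambda,-(1-\lambda))^{-1}A$ and then use the eigenvector relation $Av_0=-(1-\beta)v_0$ with $v_0=(\lambda,1-\lambda)^\top$ to read off $\Sigma^{-1}v_0=(1-\beta)(1,1)^\top$ with no matrix inversion at all. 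Both computations give $v_0^\top\Sigma^{-1}v_0=1-\beta$ and hence the common value $s^2(1-\beta)$ with $s=g(2\lambda-1)e^{-(1-\beta)\theta+c(\beta)}$, matching the one-dimensional Mahalanobis distance $s^2/(1/(1-\beta))$ on the right-hand side. Your algebra is cleaner; the paper's is shorter given it cites \cite{B-U}. There is no gap in your argument.
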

\begin{rem}\label{cont-of-Psi}
Due to the definitions of $\Psi(\lambda,\theta)$ and $\Psi(\theta)$ and the last statement of Lemma \ref{small-tv},
\begin{enumerate}
\item
The function $\Psi:\R\to(0,1)$ is continuous.
\item 
For any $\theta\in\R$, $\lim\limits_{\lambda\uparrow1}\Psi(\lambda,\theta)=\Psi(\theta)$.
\end{enumerate}
\end{rem}
\begin{proof}[Proof of Lemma \ref{Def-Psi}]
For the proof of this Lemma, we will use the following formula for the total variation distance between two Gaussian random vectors with the same covariance matrix, that was proven in \cite{B-U}:
\begin{equation}\label{tv-gaus}
\lVert N(\mu,\Sigma)-N(0,\Sigma)\rVert_{\text{TV}}=2\Phi\left(\dfrac{\sqrt{\mu^\top\Sigma^{-1}\mu}}{2}\right)-1,
\end{equation}
where $\Phi$ is the standard CDF of a $N(0,1)$ random variable. In this case, from (\ref{sigma-def}), we have
$$\Sigma^{-1}=\begin{pmatrix}
\frac{1}{\lambda}-\beta&-\beta\\-\beta&\frac{1}{1-\lambda}-\beta
\end{pmatrix},$$
and
\begin{align*}
\Sigma^{-1}\mu_\lambda(\theta)&=g(2\lambda-1)e^{-(1-\beta)\theta+c(\beta)}\begin{pmatrix}
\frac{1}{\lambda}-\beta&-\beta\\-\beta&\frac{1}{1-\lambda}-\beta
\end{pmatrix}\begin{pmatrix}
\lambda\\1-\lambda
\end{pmatrix}\\&=(1-\beta)g(2\lambda-1)e^{-(1-\beta)\theta+c(\beta)}\begin{pmatrix}
    1\\1
\end{pmatrix}.
\end{align*}
Therefore, by (\ref{tv-gaus}),
\begin{align*}
\lVert N(\mu_\lambda(\theta),\Sigma)-N(0,\Sigma)\rVert_{\text{TV}}&=2\Phi\left(\dfrac{\sqrt{\mu_{\lambda}(\theta)^\top\Sigma^{-1}\mu_\lambda(\theta)}}{2}\right)-1\\&=2\Phi\left(\dfrac{g(2\lambda-1)e^{-(1-\beta)\theta+c(\beta)}\sqrt{1-\beta}}{2}\right)-1.
\end{align*}
It is easy to see that the formula for
$$\left\lVert N\left(g(2\lambda-1)\cdot e^{-(1-\beta)\theta+c(\beta)},\dfrac{1}{1-\beta}\right)-N\left(0,\dfrac{1}{1-\beta}\right)\right\rVert_{\text{TV}}$$
is identical, according to (\ref{tv-gaus}), which concludes the proof of this Lemma.
\end{proof}
\section{Proof of Theorem \ref{main-theorem}}
Before we move on to the proof of Theorem \ref{main-theorem}, we prove the following technical lemma, which will allow us to finish the proof of the upper bound, in the case that $|A|\in[n/2,(1-\varepsilon_0)n)$.
\begin{lem}\label{quick-coal}
For any $\varepsilon_0,\eta,c>0$, there exists some $\delta>0$ such that: If $|A|\in(\varepsilon_0 n,(1-\varepsilon_0)n)$ and two copies $\left(U_t^{(i)},V_t^{(i)}\right)$ (for $i=1,2$) of the two-coordinate chain start at $\left(u_0^{(i)},v_0^{(i)}\right)$, respectively such that
$$\left|u_0^{(1)}-u_0^{(2)}\right|, \left|v_0^{(1)}-v_0^{(2)}\right|\le\delta\sqrt{n}\ \ \text{and}\ \ \left|u_0^{(i)}\right|,\left|v_0^{(i)}\right|\le c\sqrt{n},$$
there is a way to couple the two chains so that if $n$ is large enough,
$$\PR\left(\left(U_{\delta+\delta^{1/2}}^{(1)},V_{\delta+\delta^{1/2}}^{(1)}\right)\neq\left(U_{\delta+\delta^{1/2}}^{(2)},V_{\delta+\delta^{1/2}}^{(2)}\right)\right)\le\eta.$$
\end{lem}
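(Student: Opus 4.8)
The plan is to couple the two chains by a two-phase strategy: first bring the two coordinate pairs to agree in their difference-from-each-other, then make the ``total'' coordinates coincide as well, using the monotonicity inherent in the Glauber dynamics and the near-deterministic drift established in Section 2. More precisely, I would first pass to a monotone coupling of the full Glauber dynamics $(X_t^{(1)}, X_t^{(2)})$ on $\{-1,1\}^n$ that respects the partial order and, crucially, couples the Poisson update clocks: at each vertex, both copies use the same clock ring, and the new spin is obtained from the same uniform random variable via the respective conditional distributions. Under such a coupling, the Hamming distance between the two configurations is non-increasing in the relevant regime, but one must be careful because the two-coordinate reduction only tracks $U$ and $V$, not the full configuration; so I would instead directly couple at the level of $(U_t,V_t)$, using the fact that the transition rates depend only on $(u,v)$ (in particular on $u+v$ through the $\tanh$ term). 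The key quantitative inputs are Lemma \ref{tech-1}, which controls the fluctuation of each coordinate over a short time $t_0$ by $O(t_0/a^2)$, and Lemma \ref{tech-2}, which guarantees that neither copy leaves the window $|U_t|,|V_t|\le \varepsilon_0 n/3$ during $[0,\delta+\delta^{1/2}]$ except with $o_{n\to\infty}(1)$ probability, so that all the $O(\cdot)$ estimates with implied constants depending on $\varepsilon_0$ remain valid throughout.

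For the coupling itself, I would work with the difference process $D_t = U_t^{(1)} - U_t^{(2)}$ (and similarly for $V$). In the monotone/synchronous coupling, when a clock rings at a vertex in $A$, both copies attempt to update the same spin using the same uniform; the probability that they update to \emph{different} values is governed by the difference of the two conditional probabilities, which is $O(|m^{(1)}-m^{(2)}|/n)$ where $m^{(i)} = U^{(i)}+V^{(i)}$. Hence the difference process is a discrete-time (in the embedded chain) supermartingale-like object that tends to shrink toward $0$, and it changes by at most $2$ per step. I would show: (a) the expected squared difference $\E[(D_t)^2 + (D'_t)^2]$, where $D'_t = V_t^{(1)}-V_t^{(2)}$, satisfies a differential inequality $\frac{d}{dt}\E[\cdots] \le -c\,\E[\cdots] + O(1)$ analogous to Lemma \ref{conc-h}, so that after time $\asymp \delta^{1/2}$ the difference is $O(\sqrt{n})$ with the $O$-constant as small as we like in terms of $\delta$; combined with the initial bound $\le\delta\sqrt n$ this already gives that at time $\delta^{1/2}$ the difference is at most, say, $\delta^{1/4}\sqrt n$ with high probability. (b) Once the difference is small, I run the coupling for the remaining time $\asymp\delta$ and argue that the difference process, being a lazy random walk on $2\Z$ with $O(n)$ attempted steps per unit time and a contracting drift, hits $0$ before it can grow back — here a martingale/optional-stopping argument bounding $\PR(D$ ever reaches $\pm a\sqrt n$ before $0)$ by $O(\delta^{1/2}/\text{(current value)}^2)$-type estimates, as in Lemma \ref{tech-2}, does the job. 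Choosing $\delta$ small makes all these probabilities $\le \eta$.

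The main obstacle I anticipate is (b): showing that the \emph{difference} actually reaches exactly $0$ (not merely becomes small) within the allotted time, with the two coordinates $U$ and $V$ coalescing \emph{simultaneously}. Two issues must be handled. First, the difference process is not a clean random walk because its increment distribution depends on the current positions of both copies and on $m^{(i)}$, so I need the a priori control from Lemma \ref{tech-2} to pin the rates within constant factors, and then compare to a genuine lazy symmetric (or slightly contracting) random walk for which first-passage-to-$0$ bounds are standard; the number of attempted updates in time $\delta$ is $\Theta(\delta n)$, which is $\gg (\delta^{1/4}\sqrt n)^2 / \delta^{1/2}$ only if we are slightly careful with the exponents, so the precise choice $\delta + \delta^{1/2}$ (rather than $\delta$) is what gives the extra room. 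Second, coalescence of $U$ and of $V$ must be arranged consistently: I would couple so that once $U^{(1)}=U^{(2)}$ they stay equal (the $A$-clocks act identically from then on, since the rates then agree exactly, using that the remaining dependence on $V$ enters both copies the same way only after $V$ also agrees) — so strictly I should coalesce $m_t = U_t+V_t$ first (a one-dimensional problem, handled by the magnetization chain analysis) and then coalesce $U_t - V_t\cdot\frac{|A|}{n-|A|}$, at which point both $U$ and $V$ agree. Wrapping these two one-dimensional first-passage arguments into a single coupling and bounding the failure probability of \emph{either} step by $\eta/2$ completes the proof.
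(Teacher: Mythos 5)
Your high-level architecture is correct and matches the paper's: you correctly identify that one should first coalesce the magnetization $m_t = U_t + V_t$ (a one-dimensional problem handled via the magnetization chain) and then coalesce the orthogonal coordinate under a coupling that preserves equality of magnetizations. The paper does exactly this, running the two copies independently in the first phase and using Lemma 2.3 of \cite{LLP} (which says the drift of $Y_t^{(2)}-Y_t^{(1)}$ is non-positive when run independently) together with Lemma~\ref{hit-0} to get coalescence of magnetizations, then invoking the explicit coupling of Lemma 3.5 of \cite{LLP}, which keeps magnetizations equal while giving the $U$-difference a non-positive drift and uniformly positive update probability, and applying Lemma~\ref{hit-0} again.

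However, there are several concrete gaps in your proposal that would derail a full write-up. First, your time allocation is backwards: you run phase (a) for time $\asymp\delta^{1/2}$ and phase (b) for time $\asymp\delta$. After the first phase, you only control the $U$-difference to be about $\delta^{1/4}\sqrt n$ (or, as in the paper, $\delta^{1/3}\sqrt n$ via Lemma~\ref{tech-1}), which is \emph{larger} than the initial magnetization difference $\delta\sqrt n$; the second coalescence therefore needs the \emph{longer} time window, not the shorter one. With $\Theta(\delta n)$ steps available and a starting displacement $\delta^{1/4}\sqrt n$, the bound $\PR(\tau_0>N)\le 3k/\sqrt N$ from Lemma~\ref{hit-0} gives $O(\delta^{1/4}/\sqrt{\delta}) = O(\delta^{-1/4})$, which blows up rather than vanishes. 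The paper allocates time $\delta$ to magnetization coalescence (small start $\le 2\delta\sqrt n$, failure probability $O(\delta^{1/2})$) and time $\delta^{1/2}$ to the $U$-coalescence (start $\lesssim \delta^{1/3}\sqrt n$, failure probability $O(\delta^{1/12})$). Second, your proposed first-passage estimate ``$O(\delta^{1/2}/(\text{current value})^2)$'' has the wrong scaling: the gambler's-ruin / reflection bound is \emph{linear}, not inverse-quadratic, in the starting value, and this is precisely what Lemma~\ref{hit-0} provides. Third, the $L^2$ differential-inequality step in (a) is unnecessary and, more importantly, cannot by itself yield exact coalescence of anything — it only controls moments. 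For exact coalescence you need the negative-drift-plus-nondegeneracy structure of the two LLP lemmas, which you gesture at via ``monotonicity of Glauber dynamics'' but never pin down; in particular Lemma 3.5 of \cite{LLP} is the nontrivial ingredient that makes the $U$-difference behave like a lazy random walk with non-positive drift \emph{while} keeping the magnetizations matched, and your sketch of ``using the same uniform via conditional distributions'' does not obviously produce both properties simultaneously.
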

In the proof of Lemma \ref{quick-coal}, we will use certain well-known properties regarding the drift of the magnetization chain and the two-coordinate chain, as well as a hitting time property of the symmetric simple random walk. It should be noted that two of these properties were proven in \cite{LLP} for the discrete-time version of the Markov chain we are studying. The statements we provide here are adapted to our continuous-time version and their proofs are identical to the ones in discrete time.\\\\ 
At first we mention Lemma 2.3 from \cite{LLP}, in which it was proven that the magnetizations of two copies of the two-coordinate chain have a drift towards each other.
\begin{lem}[Lemma 2.3 of \cite{LLP}]\label{mag-neg-drift}
For the magnetization chain $(Y_t)_{t\ge0}$, the relation
$$q_n(k,k+2)+q_n(\ell,\ell-2)\ge q_n(k,k-2)+q_n(\ell,\ell+2)$$
holds for any $\ell\ge k$.
\end{lem}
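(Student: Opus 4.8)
The plan is to recast the inequality as the monotonicity of a single function and verify it with an elementary one-variable derivative bound. Set $D(k) := q_n(k,k+2) - q_n(k,k-2)$; then the asserted inequality $q_n(k,k+2)+q_n(\ell,\ell-2)\ge q_n(k,k-2)+q_n(\ell,\ell+2)$ is exactly $D(k)\ge D(\ell)$, so it suffices to show that $D$ is non-increasing on $\mathcal{Y}_n$. Plugging in the explicit rates gives
$$D(k) = -\frac{k}{2} + \frac{n-k}{4}\tanh\!\left(\frac{\beta}{n}(k+1)\right) + \frac{n+k}{4}\tanh\!\left(\frac{\beta}{n}(k-1)\right),$$
and the right-hand side is the restriction to $\{-n,-n+2,\dots,n\}$ of a smooth function $\tilde D(x)$ of a real variable $x\in[-n,n]$; it is enough to check $\tilde D'(x)\le 0$ there.

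Differentiating,
$$\tilde D'(x) = -\frac{1}{2} + \frac{1}{4}\left(\tanh\!\left(\frac{\beta}{n}(x-1)\right) - \tanh\!\left(\frac{\beta}{n}(x+1)\right)\right) + \frac{\beta}{4}\cdot\frac{n-x}{n}\operatorname{sech}^2\!\left(\frac{\beta}{n}(x+1)\right) + \frac{\beta}{4}\cdot\frac{n+x}{n}\operatorname{sech}^2\!\left(\frac{\beta}{n}(x-1)\right).$$
I would then bound the three nonconstant groups separately. The $\tanh$-difference is $\le 0$ because $\tanh$ is increasing and $x-1<x+1$. For the last two terms, observe that on $[-n,n]$ the coefficients $\frac{n-x}{n}$ and $\frac{n+x}{n}$ are nonnegative and sum to $2$, while $\operatorname{sech}^2\le 1$; hence these two terms together contribute at most $\frac{\beta}{4}\cdot 2 = \frac{\beta}{2}$. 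Combining, $\tilde D'(x)\le -\frac{1}{2}+\frac{\beta}{2} = -\frac{1-\beta}{2}<0$ on all of $[-n,n]$. Thus $\tilde D$ is strictly decreasing there, so $D(k)\ge D(\ell)$ whenever $k\le\ell$, which is the claim.

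There is no genuine obstacle here: the only point one must get right is that the positive contribution coming from differentiating the two $\tanh$ factors is limited by the constraint $\frac{n-x}{n}+\frac{n+x}{n}=2$ together with $\beta<1$, which is precisely what makes it lose to the $-\frac{1}{2}$ produced by the linear term $-x/2$. Equivalently, one could run the same estimate directly on the finite difference $D(k)-D(k+2)$, but passing to a real variable keeps the bookkeeping to a minimum; this mirrors the discrete-time argument of \cite{LLP}.
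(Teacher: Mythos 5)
Your proof is correct. The reduction to showing that $D(k) := q_n(k,k+2) - q_n(k,k-2)$ is non-increasing is exactly the right reframing, the explicit formula
$$D(k) = -\frac{k}{2} + \frac{n-k}{4}\tanh\!\left(\frac{\beta}{n}(k+1)\right) + \frac{n+k}{4}\tanh\!\left(\frac{\beta}{n}(k-1)\right)$$
is computed correctly, and the derivative estimate is sound: the $\tanh$-difference is nonpositive by monotonicity of $\tanh$, while the two $\mathrm{sech}^2$ terms are a convex combination (coefficients $\tfrac{n-x}{n}$ and $\tfrac{n+x}{n}$ are nonnegative on $[-n,n]$ and sum to $2$) of quantities bounded by $1$, giving the total bound $\tilde D'(x)\le -\tfrac{1}{2}+\tfrac{\beta}{2}=-\tfrac{1-\beta}{2}<0$. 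This is exactly where $\beta<1$ enters, as you note.

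For context: the paper does not actually reproduce a proof of this lemma; it cites Lemma 2.3 of \cite{LLP} and remarks only that the discrete-time argument there carries over to the continuous-time rates verbatim. Your argument therefore supplies a self-contained verification in the continuous-time setting rather than reproducing the paper's argument. It is worth observing that passing to the real-variable extension $\tilde D$ is entirely legitimate here because the rate formulas are restrictions of smooth functions, and the boundary cases $k=\pm n$ are handled automatically since the corresponding rates vanish (the prefactors $n-k$ and $n+k$ hit zero).
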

We will use this lemma to prove that if we run the two-coordinate chains independently, the drift of the difference is negative. This, due to a standard hitting-time estimate of a symmetric simple random walk we mention next, will imply that the magnetizations become equal quickly with high probability. The lemma that will facilitate this argument is the following, and can be found as Corollary 2.28 in \cite{LPW}:
\begin{lem}\label{hit-0}
Let $(S_n)_{n\ge0}$ be a symmetric discrete-time simple random walk on $\Z$ starting at $S_0=k>0$ and let $\tau=\inf\{n\ge0:S_n=0\}$. Then for any $n\in\N$,
$$\PR(\tau>n)\le\dfrac{3k}{\sqrt{n}}.$$
\end{lem}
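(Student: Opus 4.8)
The plan is to compare the one-sided hitting time $\tau$ with a two-sided exit time, for which both the exit probabilities and the expected duration can be computed by classical martingale arguments, and then to combine the two estimates through a simple sample-path inclusion. First, if $\sqrt{n}\le k$ then $3k/\sqrt{n}\ge3\ge1\ge\PR(\tau>n)$ and there is nothing to prove; so assume $\sqrt{n}>k$ and set $N=\lceil\sqrt{n}\rceil$, which is an integer with $k<N\le2\sqrt{n}$ (the upper bound using $\sqrt{n}\ge1$, which holds since $n\in\N$, and the lower bound using that $\lceil\sqrt{n}\rceil\ge\sqrt{n}>k$). Let $\tau_N=\inf\{m\ge0:S_m\in\{0,N\}\}$. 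Since a walk confined to $\{1,\dots,N-1\}$ has probability at most $1-2^{-N}$ of failing to exit in any given block of $N$ steps, $\tau_N<\infty$ almost surely with a geometric tail, so in particular $\E[\tau_N]<\infty$. Applying the optional stopping theorem to the martingale $S_{m\wedge\tau_N}$ (bounded, with increments in $\{-1,0,1\}$) and letting $m\to\infty$ gives the gambler's ruin identity $\PR(S_{\tau_N}=N)=k/N$.

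Next I would compute $\E[\tau_N]$ via the martingale $M_m=S_{m\wedge\tau_N}^2-(m\wedge\tau_N)$. Since $S_m^2-m$ is a martingale, so is $M_m$, hence $\E[S_{m\wedge\tau_N}^2]-\E[m\wedge\tau_N]=k^2$ for every $m$. Letting $m\to\infty$, the first term converges to $\E[S_{\tau_N}^2]$ by bounded convergence (using $|S_{m\wedge\tau_N}|\le N$ and $\tau_N<\infty$ a.s.) and the second to $\E[\tau_N]$ by monotone convergence. Therefore
$$\E[\tau_N]=\E[S_{\tau_N}^2]-k^2=N^2\cdot\frac{k}{N}-k^2=k(N-k)\le kN,$$
and Markov's inequality yields $\PR(\tau_N>n)\le\E[\tau_N]/n\le kN/n$.

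Finally I would combine these estimates. Since hitting $\{0,N\}$ occurs no later than hitting $0$, we have $\tau_N\le\tau$; moreover on the event $\{\tau_N\le n<\tau\}$ the walk has exited $\{1,\dots,N-1\}$ by time $n$ but has not yet reached $0$, so it must have exited at $N$, i.e. $S_{\tau_N}=N$. Hence $\{\tau>n\}\subseteq\{\tau_N>n\}\cup\{S_{\tau_N}=N\}$, and so
$$\PR(\tau>n)\le\PR(\tau_N>n)+\PR(S_{\tau_N}=N)\le\frac{kN}{n}+\frac{k}{N}\le\frac{2k}{\sqrt{n}}+\frac{k}{\sqrt{n}}=\frac{3k}{\sqrt{n}},$$
using $\sqrt{n}\le N\le2\sqrt{n}$. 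The only point requiring genuine care is the justification of optional stopping for $S_{m\wedge\tau_N}^2-(m\wedge\tau_N)$ in the limit $m\to\infty$, which is handled by the a.s. finiteness of $\tau_N$ together with the deterministic bound $|S_{m\wedge\tau_N}|\le N$; the choice of the integer $N=\lceil\sqrt{n}\rceil$ and the verification $k<N\le2\sqrt{n}$ are minor bookkeeping and account for the constant $3$ (with an optimal real choice $N\approx\sqrt{n}$ one would only gain the slightly sharper constant $2$).
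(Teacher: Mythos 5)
Your proof is correct. The paper itself offers no argument for this lemma --- it is quoted as Corollary 2.28 of \cite{LPW} --- so there is nothing internal to compare against; the standard proof in that reference goes through the reflection principle and a local estimate for $\PR_0(S_r=j)$, whereas you take the classical two-sided route: gambler's ruin gives $\PR(S_{\tau_N}=N)=k/N$, the martingale $S_m^2-m$ gives $\E[\tau_N]=k(N-k)$, and the inclusion $\{\tau>n\}\subseteq\{\tau_N>n\}\cup\{S_{\tau_N}=N\}$ together with Markov's inequality and the choice $N=\lceil\sqrt{n}\rceil$ yields $2k/\sqrt{n}+k/\sqrt{n}=3k/\sqrt{n}$. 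All the delicate points are handled: the trivial case $k\ge\sqrt{n}$ is dispatched first, the bounds $\sqrt{n}\le N\le 2\sqrt{n}$ are verified, optional stopping is justified by the a.s.\ finiteness of $\tau_N$ (geometric tail) together with boundedness of the stopped walk, and the sample-path inclusion is argued correctly (if $\tau_N\le n<\tau$ then the exit was at $N$). Your approach is self-contained and elementary, at the mild cost of an integrality detour ($N=\lceil\sqrt{n}\rceil$) that is responsible for the factor $2$ in the first term; it recovers exactly the constant $3$ the paper needs.
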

After the magnetizations become equal, the goal is to find a coupling of the two-coordinate chains that keeps the magnetizations equal and forces the two-coordinate chains to meet quickly with high probability. This coupling was described in Lemma 3.5 of \cite{LLP}.
\begin{lem}(Lemma 3.5 of \cite{LLP})\label{two-coord-neg-drift}
Let $\sigma_1,\sigma_2\in\{-1,1\}^n$ be configurations with $m(\sigma_1)=m(\sigma_2)$ and assume that $|A|\in(\varepsilon_0n,(1-\varepsilon_0)n)$. There exists a coupling of the Glauber dynamics $(X_t^{(1)})_{t\ge0}$, $(X_t^{(2)})_{t\ge0}$ with initial states $\sigma_1,\sigma_2$ respectively, such that:
\begin{enumerate}
\item
For every $t\ge0$, $m(X_t^{(1)})=m(X_t^{(2)})$.
\item
For every $t\ge0$, let $\tau_t$ be the first time of the first (common) update that happens after time $t$. Then,
$$\E\left[\left(U_{\tau_t}^{(2)}-U_{\tau_t}^{(1)}\right)-\left(U_t^{(2)}-U_t^{(1)}\right)|X_t^{(1)},X_t^{(2)}\right]\le0$$
and on the event that $\left|U_t^{(i)}\right|,\left|V_t^{(i)}\right|\le\varepsilon_0n/3$,
$$\E\left[\left(U_{\tau_t}^{(2)}-U_{\tau_t}^{(1)}\right)\neq\left(U_t^{(2)}-U_t^{(1)}\right)|X_t^{(1)},X_t^{(2)}\right]\ge c_0',$$
for some constant $c_0'$ that depends only on $\varepsilon_0$.
\end{enumerate}
\end{lem}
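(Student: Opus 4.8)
The plan is to recall the coupling of Lemma 3.5 of \cite{LLP} and to check that its construction and analysis carry over to continuous time essentially verbatim; the one genuinely continuous-time point is the bookkeeping of the Poisson clocks, which I flag at the end. The structural fact used throughout is that on the complete graph the Glauber update at a vertex $v$ depends on the rest of the configuration only through the current magnetization and the current spin of $v$. Concretely, write $D=\{v:\sigma_1(v)\neq\sigma_2(v)\}=D_+\sqcup D_-$ with $D_\pm=\{v:(\sigma_1(v),\sigma_2(v))=(\pm1,\mp1)\}$; from $m(\sigma_1)=m(\sigma_2)$ one has $|D_+|=|D_-|$. Fix a bijection $\pi$ of $[n]$ that is the identity off $D$, interchanges $D_+$ and $D_-$, and (following \cite{LLP}) is chosen so as to maximize the number of \emph{straddling} pairs $\{v,\pi(v)\}$ having exactly one endpoint in $A$; we re-optimize $\pi$ whenever a pair gets resolved, and once $U^{(2)}=U^{(1)}$ we switch to a matching with no straddling pair. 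The coupling is then: chain $1$ carries the rate-$1$ Poisson clocks, and whenever $v$'s clock rings we update $\sigma_1(v)$ by the Glauber rule \emph{and simultaneously} update $\sigma_2(\pi(v))$ with the same uniform random variable. The invariant maintained is $\sigma_1(v)=\sigma_2(\pi(v))$ for all $v$: this forces $v$ and $\pi(v)$ to see identical update probabilities, so they always receive the same new spin, and one checks the invariant is preserved (a clock ring either changes nothing relevant or resolves the pair $\{v,\pi(v)\}$, whose two vertices then leave $D$, with $\pi$ reset to the identity there). We take $\tau_t$ to be the first clock ring after $t$ that touches a disagreement pair.

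\emph{Property (1) and the marginal.} Since $v$ and $\pi(v)$ always get the same new spin, the two chains change their magnetizations by the same amount at every clock ring (trivially if $v\notin D$; in each of the two outcomes if $v\in D$), so $m(X_t^{(1)})=m(X_t^{(2)})$ for all $t$ and $|D|$ is non-increasing. Moreover chain $2$ is a genuine Glauber dynamics: its vertex $u$ is updated exactly when the clock of $\pi^{-1}(u)$ rings, hence at rate $1$, and since $\sigma_1(\pi^{-1}(u))=\sigma_2(u)$ by the invariant, the to-$(+1)$ probability used is the correct Glauber probability at $u$.

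\emph{Property (2).} A pair $\{v,\pi(v)\}$ with $v\in D_+$ is resolved precisely when $v$'s clock rings and sets the spin to $-1$, or $\pi(v)$'s clock rings and sets it to $+1$; on the event $|U_t^{(i)}|,|V_t^{(i)}|\le\varepsilon_0 n/3$ the magnetization satisfies $|m|\le 2\varepsilon_0 n/3$, bounded away from $\pm n$, so each of these probabilities — and hence the rate at which any given straddling pair is resolved — is bounded below by a constant $c_0'=c_0'(\varepsilon_0,\beta)>0$, which yields the lower bound in part (2). For the drift, write $a_\pm=|A\cap D_\pm|$, so $U_t^{(2)}-U_t^{(1)}=2(a_--a_+)$; resolving a pair changes this by $+2$, $-2$, or $0$ according as it is a straddling pair with its $D_+$-endpoint in $A$, a straddling pair with its $D_-$-endpoint in $A$, or a non-straddling pair, and an elementary double count shows that the number of the first kind minus the number of the second kind equals $a_+-a_-$ for \emph{every} matching $\pi$. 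Consequently the expected increment of $U^{(2)}-U^{(1)}$ over $[t,\tau_t]$ is a non-positive multiple of $U_t^{(2)}-U_t^{(1)}$, which is $\le 0$ once the chains are labelled so that $U^{(2)}\ge U^{(1)}$ (the convention under which the lemma is applied), and it stays $0$ after $U^{(2)}=U^{(1)}$ because we then use a straddling-free matching.

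\emph{Main obstacle.} Everything above is routine once the construction is fixed; the step that is not a formality in continuous time is checking that driving \emph{both} chains off chain $1$'s clocks through the time-varying bijection $\pi$ leaves chain $2$ with the correct marginal law (done in the second paragraph), and the invariant $\sigma_1(v)=\sigma_2(\pi(v))$ is exactly the right bookkeeping device for this. Since only finitely many clock rings occur in any bounded time interval and $\pi$ is re-optimized only at those (and at the one time $U^{(2)}$ first equals $U^{(1)}$), the triple $(X_t^{(1)},X_t^{(2)},\pi_t)$ is a well-defined finite-state Markov chain, and the rest is a transcription of the discrete-time proof of \cite{LLP}.
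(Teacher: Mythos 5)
The paper does not actually prove this statement; it is imported verbatim (modulo discrete-to-continuous translation) from Lemma~3.5 of \cite{LLP}, and the surrounding text says explicitly that "their proofs are identical to the ones in discrete time." So there is no in-paper proof to compare against; what follows evaluates your reconstruction on its own terms.

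The coupling you describe (the swap matching $\pi$ between $D_+$ and $D_-$, driving chain~2 off chain~1's clocks through $\pi$), the invariant $\sigma_1(v)=\sigma_2(\pi(v))$, the check that chain~2 has the correct marginal, and the drift computation via $U^{(2)}-U^{(1)}=2(a_--a_+)$ together with the identity $y-z=a_+-a_-$ for every matching are all correct and in the spirit of \cite{LLP}. The genuine gap is in the lower bound. You argue that "the rate at which any given straddling pair is resolved is bounded below by a constant $c_0'$," and then assert this "yields the lower bound in part~(2)." But part~(2) is a lower bound on $\PR\bigl(U^{(2)}_{\tau_t}-U^{(1)}_{\tau_t}\neq U^{(2)}_t-U^{(1)}_t\mid X_t^{(1)},X_t^{(2)}\bigr)$, where $\tau_t$ is the first (coupled) update after $t$ — and, indeed, this is exactly how the lemma is used in the proof of Lemma~\ref{quick-coal}, where it is converted into the statement that the difference chain makes $\pm$-moves at rate $\ge c_0'\cdot n$. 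Under your coupling, $U^{(2)}-U^{(1)}$ changes only when a \emph{straddling} pair resolves, so the relevant probability is of order $(y+z)\,r(m)/n$ (or $(y+z)r(m)/|D_+|$ under your alternative reading of $\tau_t$ as "first clock ring touching a disagreement pair"). This is bounded below by a constant only if the number of straddling pairs is comparable to $n$ (resp.\ to $|D_+|$), which is neither established nor automatic: with the matching chosen to maximize straddling pairs, $(y+z)_{\max}=\min(|D\cap A|,|D\cap A^c|)$, which can be as small as $|a_+-a_-|$ even under the hypothesis $|U_t^{(i)}|,|V_t^{(i)}|\le\varepsilon_0 n/3$. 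So the argument "a single pair resolves at constant rate" does not produce the claimed per-update probability bound, and this is the crux of the lemma. (Relatedly, once $U^{(2)}=U^{(1)}$, you deliberately switch to a straddling-free matching, so the asserted lower bound fails there — the statement must implicitly be restricted to the event $U_t^{(1)}\neq U_t^{(2)}$, which you should flag rather than elide.) To close the gap you would need to either quantify the number of straddling pairs available to the re-optimized matching in terms of $n$, $|D_t|$ and the constraint on $(U_t,V_t)$, or verify that the precise formulation in Lemma~3.5 of \cite{LLP} is actually the weaker statement your argument does deliver and that Lemma~\ref{quick-coal} can be run with it.
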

\begin{proof}[Proof of Lemma \ref{quick-coal}]
In the proof that follows, all stated or implied constants may depend on $\varepsilon_0,\eta,c$ but do not depend on $\delta$ or $n$. We couple the chains in two steps. First, we make sure that the two magnetizations $Y_t^{(i)}=U_t^{(i)}+V_t^{(i)}$ are equal with high probability at $t_1=\delta$ and then that $U_{t_0}^{(i)}$ become equal with high probability in the remaining time.\\\\
Without loss of generality, assume that $u_0^{(1)}+v_0^{(1)}\le u_0^{(2)}+v_0^{(2)}$. For the first part of the coupling, we let the two chains run independently until the magnetizations coalesce. Let 
$$\tau_1:=\inf\left\{t\ge0:Y_t^{(2)}=Y_t^{(1)}\right\}.$$
Due to Lemma \ref{mag-neg-drift}, the chain $\left(\frac{Y_t^{(2)}-Y_t^{(1)}}{2}\right)_{t\ge0}$ under the coupling we described above has non-positive drift. Also, for some $c_0>0$,
$$q_n(k,k+2)+q_n(k,k-2)=\dfrac{1}{2}n\left(1-\dfrac{k}{n}\tanh\left(\beta\cdot\dfrac{k}{n}\right)\right)+O(1)\ge c_0\cdot n.$$
Therefore, we can couple the difference chain with a symmetric $\pm1$-random walk $(S_t)_{t\ge0}$ on $\Z$ whose updates happen at rate $\ge c_0\cdot n$ and starts at $s=\left(u_0^{(2)}+v_0^{(2)}\right)-\left(u_0^{(1)}+v_0^{(1)}\right)$, so that $Y_t^{(2)}-Y_t^{(1)}\le2S_t$, for $0\le t\le \tau_1$. Let $R_\delta$ be the event that between times 0 and $\delta$, there have been at most $c_0\delta n/2$ updates. Note that $\PR(R_\delta)=o_{n\to\infty}(1)$ for any $\delta>0$, because of standard large deviation estimates. Because of Lemma \ref{hit-0},
\begin{equation}\label{tau_1}
\PR(\tau_1>\delta)\le\PR(S_t>0\ \forall t\in[0,\delta])\le\PR(R_\delta)+\dfrac{c_1\cdot s}{\sqrt{c_0\delta n/2}}=O(\delta^{1/2})+o_{n\to\infty}(1).\\\\
\end{equation}

We move on to the description and analysis of the second part of the coupling. In what follows, we condition on $U_\delta^{(i)},V_\delta^{(i)}$ and we assume, without loss of generality, that $U_\delta^{(2)}\ge U_\delta^{(1)}$. Due to Lemma \ref{tech-1},
\begin{equation}\label{diff-from-start}
\PR\left(\left|U_\delta^{(1)}-u_0^{(1)}\right|\ge\delta^{1/3}\sqrt{n}\right)=O(\delta^{1/3}),
\end{equation}
and the same holds for $U_\delta^{(2)},V_\delta^{(i)}$ (for $i=1,2$). Let 
\begin{align*}
E_1=\{\tau_1\le\delta\}&\cap\left\{\left|U_\delta^{(1)}-U_\delta^{(2)}\right|,\left|V_\delta^{(1)}-V_\delta^{(2)}\right|\le3\delta^{1/3}\sqrt{n}\right\}\\&\cap\left\{\left|U_\delta^{(1)}\right|,\left|U_\delta^{(2)}\right|,\left|V_\delta^{(1)}\right|,\left|V_\delta^{(2)}\right|\le(c+1)\sqrt{n}\right\}.
\end{align*}
Because of equations (\ref{tau_1}) and (\ref{diff-from-start}), we know that $\PR(E_1)\ge1-O(\delta^{1/3})-o_{n\to\infty}(1)$.  The coupling after time $\delta$ works as follows: On the event $E_1^c$, we run the two chains independently. On the event $E_1$, we run the coupling of Lemma \ref{two-coord-neg-drift}. Let
$$\tau_2:=\inf\left\{t\ge\delta:\max\left(\left|U_t^{(i)}\right|,\left|V_t^{(i)}\right|\right)>\varepsilon_0 n/3\right\}\ \ \text{and}\ \ \tau_3=\inf\left\{t\ge\delta:U_t^{(1)}=U_t^{(2)}\right\}.$$
By $E_2$ we denote the event that $\{\tau_2>\delta+\delta^{1/2}\}$. Lemma \ref{tech-2} implies that $\PR(E_2^c)=o_{n\to\infty}(1)$. Because of the properties of the coupling in Lemma \ref{two-coord-neg-drift}, we can couple the difference chain $\left(U_t^{(2)}-U_t^{(1)}\right)_{t\ge\delta}$ with a symmetric $\pm1$-random walk $(S_t')_{t\ge\delta}$ on $\Z$ whose updates happen at rate $\ge c_0'\cdot n$ and starts at $s'=U_\delta^{(2)}-U_\delta^{(1)}$, so that $U_t^{(2)}-U_t^{(1)}\le2S_t'$, for $\delta\le t\le\min(\tau_2,\tau_3)$. Just as before, let $R_\delta'$ be the event that between times $\delta$ and $\delta+\delta^{1/2}$, there have been at most $c_0'\delta^{1/2}n/2$ updates, and of course, $\PR(R_\delta')=o_{n\to\infty}(1)$ for any $\delta>0$. Under this coupling,
\begin{align*}
&\PR\left(\left(U_{\delta+\delta^{1/2}}^{(1)},V_{\delta+\delta^{1/2}}^{(1)}\right)\neq\left(U_{\delta+\delta^{1/2}}^{(2)},V_{\delta+\delta^{1/2}}^{(2)}\right)\right)\\\le\ &\PR(E_1^c)+\PR(E_2^c)+\PR(R_\delta')+\PR\left(S_t'>0\ \forall t\in[\delta,\delta+\delta^{1/2}]\ |\ E_1\cap E_2\right)\\\le\ &\PR(E_1^c)+\PR(E_2^c)+\PR(R_\delta')+\dfrac{c_1'\delta^{1/3}\sqrt{n}}{\sqrt{c_0'\delta^{1/2}n/2}}\\=\ &O(\delta^{1/12})+o_{n\to\infty}(1)<\eta,
\end{align*}
if $\delta$ is small enough and $n$ is large enough. We have proven the lemma.
\end{proof}
The first step towards proving Theorem \ref{main-theorem} is the following proposition.
\begin{prop}\label{first-step}
Let $(\sigma_n)_{n=1}^\infty$ be a sequence $\sigma_n\in\{-1,1\}^n$ with $|A_n|/n\to\lambda\in[1/2,1)$. Then, for every $\theta\in\R$
\begin{align*}
\lim\limits_{n\to\infty}\left\lVert H_{t_{n,\theta}}^{(n)}(\sigma_n,\cdot)-\pi_n\right\rVert_{\text{TV}}=\Psi(\lambda,\theta)<\Psi(\theta),
\end{align*}
where $\Psi(\lambda,\theta)$ was defined in Lemma \ref{Def-Psi}. For any $\theta\in\R$, we set $\Psi(1/2,\theta)=0$, as the formula for $\Psi(\lambda,\theta)$ suggests anyway.
\end{prop}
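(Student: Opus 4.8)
The plan is to paste together the three pieces already developed: the near-deterministic first phase (Propositions \ref{conc-for-pair} and \ref{conc-pair-1/2}), the diffusion approximation of the remaining $O(1)$ window (Proposition \ref{converg-diff}), and the quick coalescence of Lemma \ref{quick-coal}. By Lemma \ref{two-coord-red} it suffices to estimate $\lVert H^{(U,V)}_{t_{n,\theta}}((|A_n|,-n+|A_n|),\cdot)-\pi_{U,V}\rVert_{\text{TV}}$, and the bijection $(u,v)\mapsto(u/\sqrt n,v/\sqrt n)$ preserves this distance. Write $t_{n,\theta}=t_{n,-C}+(C+\theta)$ with $C$ a large auxiliary constant. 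On $[0,t_{n,-C}]$ the rescaled pair concentrates, up to relative error $\varepsilon$, at $\alpha(\lambda,C)(\lambda,1-\lambda)$ by Proposition \ref{conc-for-pair} (or merely stays inside a ball of radius $M_2$ when $\lambda=1/2$, by Proposition \ref{conc-pair-1/2}); feeding this random but $\varepsilon$-localized starting point into Proposition \ref{converg-diff} over $[t_{n,-C},t_{n,\theta}]$ and using Lemma \ref{small-tv}(1) to absorb both the $\varepsilon$-spread and the gap between $\Sigma_{C+\theta}$ and $\Sigma$, one gets that the law of $(U_{t_{n,\theta}},V_{t_{n,\theta}})/\sqrt n$ converges weakly --- as $n\to\infty$, then $\varepsilon\to0$ and $C\to\infty$ --- to $N(\mu_\lambda(\theta),\Sigma)$, with $\mu_\lambda(\theta)$ as in (\ref{mean}). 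Separately, the rescaled stationary law $\tilde\pi_n$ converges weakly to $N(0,\Sigma)$: this is the Ellis--Newman CLT for the Curie--Weiss magnetization together with the elementary hypergeometric description of how that magnetization splits across $A$ and $A^c$.

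The lower bound $\liminf_n\ge\Psi(\lambda,\theta)$ is the easy direction. Total variation distance is lower semicontinuous for weak convergence --- write $\lVert\mu-\nu\rVert_{\text{TV}}=\tfrac12\sup\{|\int h\,d\mu-\int h\,d\nu|:h\in C_b,\ \lVert h\rVert_\infty\le1\}$ and pass each test integral to the limit --- so $\liminf_n\lVert\cdot\rVert_{\text{TV}}\ge\lVert N(\mu_\lambda(\theta),\Sigma)-N(0,\Sigma)\rVert_{\text{TV}}=\Psi(\lambda,\theta)$ by Lemma \ref{Def-Psi}.

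The upper bound is the main obstacle: weak convergence of the two rescaled laws to Gaussians does not by itself bound $\lVert\cdot\rVert_{\text{TV}}$ from above --- the lattice measures could be almost singular --- so one must smooth at the lattice scale, and this is exactly what Lemma \ref{quick-coal} is for. Let $\delta=\delta(\eta)$ be its constant and peel off the last window: with $T_2=t_{n,\theta}-(\delta+\delta^{1/2})$ we have $H^{(U,V)}_{t_{n,\theta}}(\text{start},\cdot)=\mu_n P_{\delta+\delta^{1/2}}$ and $\pi_{U,V}=\pi_{U,V}P_{\delta+\delta^{1/2}}$, where $\mu_n$ is the time-$T_2$ law and $P_s=H^{(U,V)}_s$. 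For any coupling $\gamma_n$ of $(\mu_n,\pi_{U,V})$ and the Lemma \ref{quick-coal} coupling inside the last window,
\[
\lVert H^{(U,V)}_{t_{n,\theta}}(\text{start},\cdot)-\pi_{U,V}\rVert_{\text{TV}}\ \le\ \eta\ +\ \gamma_n\big(|w-w'|_\infty>\delta\sqrt n\ \text{ or }\ \lVert w\rVert_\infty\vee\lVert w'\rVert_\infty>c\sqrt n\big).
\]
Now choose $\gamma_n$ to be a near-optimal \emph{close}-coupling: since after rescaling $\mu_n\Rightarrow N(\mu_\lambda(\theta-\delta-\delta^{1/2}),\Sigma)$ and $\tilde\pi_n\Rightarrow N(0,\Sigma)$, a Strassen / L\'evy--Prokhorov argument gives a coupling with $\gamma_n(|w-w'|_\infty>\delta\sqrt n)\le\lVert N(\mu_\lambda(\theta-\delta-\delta^{1/2}),\Sigma)-N(0,\Sigma)\rVert_{\text{TV}}+o_n(1)$ --- the window $\delta\sqrt n$ being enormous on the lattice scale --- while tightness of both marginals controls the $c\sqrt n$-tails for $c$ large. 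Applying Lemma \ref{Def-Psi} and then letting $n\to\infty$, $\varepsilon\to0$, $C\to\infty$ and $\eta\to0$ (so $\delta\to0$), and using the continuity of $\theta\mapsto\Psi(\lambda,\theta)$ from Remark \ref{cont-of-Psi}, we conclude $\limsup_n\lVert\cdot\rVert_{\text{TV}}\le\Psi(\lambda,\theta)$. When $\lambda=1/2$ the argument is the same with Proposition \ref{conc-pair-1/2} in place of Proposition \ref{conc-for-pair}, the limiting mean being $0$ so that both bounds give $\Psi(1/2,\theta)=0$.

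Finally, the strict inequality $\Psi(\lambda,\theta)<\Psi(\theta)$ for $\lambda<1$: by the Gaussian formula (\ref{tv-gaus}), $\lVert N(m,\sigma^2)-N(0,\sigma^2)\rVert_{\text{TV}}=2\Phi(|m|/2\sigma)-1$ is strictly increasing in $|m|$, and $g$ is strictly increasing on $[0,1]$ with $g(0)=0$, $g(1)=1$, so the mean $g(2\lambda-1)\,e^{-(1-\beta)\theta+c(\beta)}$ entering $\Psi(\lambda,\theta)$ is strictly below the value $e^{-(1-\beta)\theta+c(\beta)}$ entering $\Psi(\theta)=\Psi(1,\theta)$. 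I expect the genuinely delicate point to be this upper bound, specifically the construction of the close-coupling of the near-Gaussian pre-limit laws $\mu_n$ and $\pi_{U,V}$ and its chaining with the coalescence coupling of Lemma \ref{quick-coal} --- it is this step that converts mere weak convergence into two-sided control of the total-variation profile.
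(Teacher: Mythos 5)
Your proposal follows the same skeleton as the paper's argument: concentrate the pair at time $t_{n,-C}$ via Propositions \ref{conc-for-pair}/\ref{conc-pair-1/2}, run the diffusion approximation (Proposition \ref{converg-diff}) over the $O(1)$ window, absorb $\varepsilon$-spread and $\Sigma_{C+\theta}\neq\Sigma$ via Lemma \ref{small-tv}, and crucially convert weak convergence into two-sided TV control by peeling off a $\delta+\delta^{1/2}$ tail and invoking the coalescence coupling of Lemma \ref{quick-coal}. You have correctly singled out the genuine obstruction --- that weak convergence alone gives only a lower bound on TV, and the close-coupling plus Lemma \ref{quick-coal} is the mechanism that upgrades it --- and that is the heart of the matter.

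The differences from the paper are cosmetic rather than structural. For the lower bound, you appeal to lower semicontinuity of TV under weak convergence; the paper instead computes directly with the distinguishing statistic $U+V\le\kappa_\theta\sqrt n$, which is a hands-on instance of the same fact. For the upper bound, you invoke a ``Strassen / L\'evy--Prokhorov'' near-optimal coupling of the rescaled pre-limit laws; the paper builds the same object explicitly by chaining Skorokhod representations of the two weak convergences with an optimal TV coupling of the limiting Gaussians, then gluing. Both roads lead to the same $\gamma_n(|w-w'|_\infty>\delta\sqrt n)\le\lVert N(\mu_\lambda(\theta-\cdot),\Sigma)-N(0,\Sigma)\rVert_{\text{TV}}+o_n(1)+O(\eta)$ estimate; the paper's version is a little more self-contained and avoids quantifier issues with the supremum over sets in Strassen's characterization. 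For the stationary law, you cite the Ellis--Newman CLT plus the hypergeometric split, whereas the paper reads $(\tilde U_\infty,\tilde V_\infty)\Rightarrow N(0,\Sigma)$ off the diffusion approximation itself; again, interchangeable. Finally, your treatment of $\lambda=1/2$ gestures at ``limiting mean $0$,'' but to be precise one needs the contraction $e^{(C+\theta)A}$ of Lemma \ref{small-tv}(2) to wash out the merely bounded (not pinned) initial condition inside $B_\infty(0,M_2)$; you should state that the relevant tool is the second clause of Lemma \ref{small-tv}, not a concentration at the origin.
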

\begin{proof}
Throughout the proof of this Proposition, by a small abuse of notation, if $(Z_t)_{t\ge0}$ is a continuous-time process, we denote by $Z_\infty$ a random variable which follows the stationary distribution.\\\\ In the case $\lambda\in(1/2,1)$, fix $\eta>0$ and let $\varepsilon>0$, which will be chosen later. Due to Proposition \ref{conc-for-pair} if $C>0$ is large enough, then
\begin{equation}\label{conc-tcc}
\PR\left[\left(
\dfrac{U_{t_{n,-C}}}{\sqrt{n}},\dfrac{V_{t_{n,-C}}}{\sqrt{n}}\right)\in\mathcal{R}_{\varepsilon,C}\right]\ge1-\eta,
\end{equation}
where we set
$$\mathcal{R}_{\varepsilon,C}:=\left\{(u,v)\in\R^2:\lVert(u,v)-(\alpha_1(\lambda,C),\alpha_2(\lambda,C))\rVert_2\le\varepsilon\lVert(\alpha_1(\lambda,C),\alpha_2(\lambda,C))\rVert_2\right\}.$$
In the case $\lambda=1/2$, we again fix $\eta>0$ and set $M_2>0$ to be a constant for which the statement of Proposition \ref{conc-pair-1/2} holds. In this case, we set
$$\mathcal{R}_{M_2}:=B_\infty(0,M_2)\subseteq\R^2.$$
\textbf{Proof of the upper bound:} We deal with the case $\lambda\in(1/2,1)$. Every time the argument has to be modified in the case $\lambda=1/2$, we mention the modification. The notation in the two cases will differ, as in the $\lambda=1/2$ case, we find suitable parameters $M_2,C$ depending on $\eta$, instead of $\varepsilon, C$ and we work with $\mathcal{R}_{M_2}$ instead of $\mathcal{R}_{\varepsilon,C}$.\\\\ 
We claim that for any $\eta>0$, there exist $\varepsilon,C>0$ such that
$$\limsup\limits_{n\to\infty}\sup\limits_{(u,v)\in\sqrt{n}\mathcal{R}_{\varepsilon,C}}\left\lVert H_{C+\theta}^{(U,V)}((u,v),\cdot)-\pi_{U,V}\right\rVert_{\text{TV}}\le\Psi(\lambda,\theta)+6\eta.$$
Suppose the opposite holds. Then, there exists a sequence $(u_n,v_n)\in\sqrt{n}\mathcal{R}_{\varepsilon,C}$ such that
\begin{equation}\label{abs-1}
\limsup\limits_{n\to\infty}\left\lVert H_{C+\theta}^{(U,V)}((u_n,v_n),\cdot)-\pi_{U,V}\right\rVert_{\text{TV}}>\Psi(\lambda,\theta)+6\eta.
\end{equation}
Due to the compactness of $\mathcal{R}_{\varepsilon,C}$, we can assume (by extracting a subsequence and modifying the rest of the sequence, if necessary) that $(u_n,v_n)/\sqrt{n}\to z_0\in\mathcal{R}_{\varepsilon,C}$. Therefore, Proposition \ref{converg-diff} applies. Let $(D_t)_{t\ge0}$ be an Ornstein-Uhlenbeck process started at $D_0=(\alpha_1(\lambda,C),\alpha_2(\lambda,C))\in\R^2$ if $\lambda\in(1/2,1)$ or $D_0=(0,0)$ if $\lambda=1/2$. Also, we denote by $(D_t^{(0)})_{t\ge0}$ the Ornstein-Uhlenbeck process with $D_0^{(0)}=z_0$. Let $c>0$ be such that 
$$\PR\left[\lVert N(0,\Sigma)\rVert_{\infty}\ge c/2\right]\le\eta,$$
and $\varepsilon,C>0$ be such that the first statement of Lemma \ref{small-tv} holds for $\theta_0=|\theta|+1$. In the $\lambda=1/2$ case, $C>0$ will be chosen such that the second statement of Lemma \ref{small-tv} holds for $\theta_0=|\theta|+1$. Moreover, let $\delta>0$ be small enough so that the following two conditions hold:
\begin{itemize}
\item 
$\lVert N(\mu_\lambda(\theta-\delta-\delta^{1/2}),\Sigma)-N(\mu_\lambda(\theta),\Sigma)\rVert_{\text{TV}}\le\eta$.
\item 
The statement of Lemma \ref{quick-coal} is true, for the choices of $\eta,c$ made above. 
\end{itemize}
Such a choice for $\delta$ is possible due to the continuity of $\mu_\lambda$.\\\\
Set $\tilde{U}^{(n)}_0=u_n/\sqrt{n}, \tilde{V}^{(n)}_0=v_n/\sqrt{n}$. Due to Proposition \ref{converg-diff} and the Skorokhod's representation Theorem, if $n$ is large enough there exists a coupling of $(\tilde{U}_{C+\theta-\delta-\delta^{1/2}}^{(n)},\tilde{V}_{C+\theta-\delta-\delta^{1/2}}^{(n)})$ and $D_{C+\theta-\delta-\delta^{1/2}}^{(0)}$, such that
$$\PR\left[\left\lVert\left(\tilde{U}_{C+\theta-\delta-\delta^{1/2}}^{(n)},\tilde{V}_{C+\theta-\delta-\delta^{1/2}}^{(n)}\right)-D_{C+\theta-\delta-\delta^{1/2}}^{(0)}\right\rVert_\infty\le\delta/2\right]\ge1-\eta.$$
Similarly, there exists a coupling of $(\tilde{U}_{\infty}^{(n)},\tilde{V}_{\infty}^{(n)})$ and $D_{\infty}$, such that
$$\PR\left[\left\lVert\left(\tilde{U}_{\infty}^{(n)},\tilde{V}_{\infty}^{(n)}\right)-D_{\infty}\right\rVert_\infty\le\delta/2\right]\ge1-\eta.$$
Finally, due to Lemma \ref{small-tv} and the way $\delta$ was chosen, there exists a coupling of $D_{C+\theta-\delta-\delta^{1/2}}^{(0)}$ and $D_{\infty}$ such that
\begin{align*}
\PR\left[D_{C+\theta-\delta-\delta^{1/2}}^{(0)}=D_\infty\right]&=1-\left\lVert N(e^{(C+\theta-\delta-\delta^{1/2})A}z_0,\Sigma_{C+\theta-\delta-\delta^{1/2}})-N(0,\Sigma)\right\rVert_{\text{TV}}\\&\ge1-\left\lVert N(\mu_\lambda(\theta),\Sigma)-N(0,\Sigma)\right\rVert_{\text{TV}}-2\eta.
\end{align*}
Therefore, there exists a coupling of $\left(\tilde{U}^{(n)}_{C+\theta-\delta-\delta^{1/2}},\tilde{V}^{(n)}_{C+\theta-\delta-\delta^{1/2}}\right)$ and $\left(\tilde{U}^{(n)}_\infty,\tilde{V}^{(n)}_\infty\right)$ such that
$$\PR\left(\left\lVert\left(\tilde{U}^{(n)}_{C+\theta-\delta-\delta^{1/2}},\tilde{V}^{(n)}_{C+\theta-\delta-\delta^{1/2}}\right)-\left(\tilde{U}^{(n)}_\infty,\tilde{V}^{(n)}_\infty\right)\right\rVert_\infty\le\delta\right)\ge1-\Psi(\lambda,\theta)-4\eta.$$
Furthermore, due to the way $c>0$ was chosen,
$$\PR\left[\left\lVert\left(\tilde{U}^{(n)}_{C+\theta-\delta-\delta^{1/2}},\tilde{V}^{(n)}_{C+\theta-\delta-\delta^{1/2}}\right)\right\rVert_\infty\ \ \text{and}\ \ \left\lVert\left(\tilde{U}^{(n)}_\infty,\tilde{V}^{(n)}_\infty\right)\right\rVert_\infty\le c\right]\ge1-\eta.$$
We have found a coupling of $\left(\sqrt{n}\cdot\tilde{U}_{C+\theta-\delta-\delta^{1/2}}^{(n)},\sqrt{n}\cdot\tilde{V}^{(n)}_{C+\theta-\delta-\delta^{1/2}}\right)$ and $\left(\sqrt{n}\cdot\tilde{U}_\infty^{(n)},\sqrt{n}\cdot\tilde{V}_\infty^{(n)}\right)$, which makes them satisfy the hypotheses of Lemma \ref{quick-coal}, with probability $\ge1-\Psi(\lambda,\theta)-5\eta$. On this event, the coupling of Lemma \ref{quick-coal} fails to give coalesence of the two processes in time $\delta+\delta^{1/2}$ with probability at most $\eta$. This implies that there exists a coupling of $\left(\sqrt{n}\cdot \tilde{U}_{C+\theta}^{(n)},\sqrt{n}\cdot\tilde{V}^{(n)}_{C+\theta}\right)$ and $\left(\sqrt{n}\cdot\tilde{U}_\infty^{(n)},\sqrt{n}\cdot\tilde{V}_\infty^{(n)}\right)$ for which
$$\PR\left[\left(\sqrt{n}\cdot\tilde{U}_{C+\theta}^{(n)},\sqrt{n}\cdot\tilde{V}^{(n)}_{C+\theta}\right)=\left(\sqrt{n}\cdot\tilde{U}_\infty^{(n)},\sqrt{n}\cdot\tilde{V}_\infty^{(n)}\right)\right]\ge1-\Psi(\lambda,\theta)-6\eta.$$
This contradicts (\ref{abs-1}) and, therefore, proves our claim. To finish the proof of the upper bound, one has to observe that due to (\ref{conc-tcc}), for any $\eta>0$, there exist $\varepsilon,C>0$ such that
\begin{align*}
&\limsup\limits_{n\to\infty}\left\lVert H_{t_{n,\theta}}^{(n)}(\sigma_n,\cdot)-\pi_n\right\rVert_{\text{TV}}\\\le&\ \limsup\limits_{n\to\infty}\left(\PR\left[\left(
\dfrac{U_{t_{n,-C}}}{\sqrt{n}},\dfrac{V_{t_{n,-C}}}{\sqrt{n}}\right)\notin\mathcal{R}_{\varepsilon,C}\right]+\sup\limits_{(u,v)\in\sqrt{n}\mathcal{R}_{\varepsilon,C}}\left\lVert H_{C+\theta}^{(U,V)}((u,v),\cdot)-\pi_{U,V}\right\rVert_{\text{TV}}\right)\\\le&\ \Psi(\lambda,\theta)+7\eta.
\end{align*}
Since $\eta>0$ was arbitrary, we have successfully proven that
$$\limsup\limits_{n\to\infty}\left\lVert H_{t_{n,\theta}}^{(n)}(\sigma_n,\cdot)-\pi_n\right\rVert_{\text{TV}}\le\Psi(\lambda,\theta).$$
\textbf{Proof of the lower bound:} The need to prove the corresponding lower bound exists only in the case $\lambda\in(1/2,1)$. Note that $D_{C+\theta,1}+D_{C+\theta,2}$ is a Gaussian random variable with mean $g(2\lambda-1)\cdot e^{-(1-\beta)\theta+c(\beta)}$ and variance
\begin{align*}
\text{Var}(D_{C+\theta,1}+D_{C+\theta,2})&=\text{Var}(D_{C+\theta,1})+\text{Var}(D_{C+\theta,2})+2\text{Cov}(D_{C+\theta,1},D_{C+\theta,2})\\&\stackrel{(\ref{sigma-def})}{=}1+\dfrac{\beta}{1-\beta}=\dfrac{1}{1-\beta}.
\end{align*}
Set $\kappa_\theta=g(2\lambda-1)\cdot e^{-(1-\beta)\theta+c(\beta)}/2$ and observe that
\begin{align}\label{dist-stat}
\Psi(\lambda,\theta)=\ &\left\lVert N\left(g(2\lambda-1)\cdot e^{-(1-\beta)\theta+c(\beta)},\dfrac{1}{1-\beta}\right)-N\left(0,\dfrac{1}{1-\beta}\right)\right\rVert_{\text{TV}}\notag\\=\ &\PR\left[N\left(0,\dfrac{1}{1-\beta}\right)\le\kappa_\theta\right]-\PR\left[N\left(g(2\lambda-1)\cdot e^{-(1-\beta)\theta+c(\beta)},\dfrac{1}{1-\beta}\right)\le\kappa_\theta\right].
\end{align}
We next claim that, similarly as in the proof of the upper bound,
\begin{equation}\label{lb-int}
\liminf\limits_{n\to\infty}\inf\limits_{(u,v)\in\sqrt{n}\mathcal{R}_{\varepsilon,C}}\left\lVert H_{C+\theta}^{(U,V)}((u,v),\cdot)-\pi_{U,V}\right\rVert_{\text{TV}}\ge\Psi(\lambda,\theta)-2\eta.
\end{equation}
Let $(u_n,v_n)\in\sqrt{n}\mathcal{R}_{\varepsilon,C}$ be a sequence such that $(u_n,v_n)/\sqrt{n}\to z_0\in\mathcal{R}_{\varepsilon,C}$. We know that if we set $\tilde{U}_0^{(n)}=u_n/\sqrt{n}, \tilde{V}^{(n)}_0=v_n/\sqrt{n}$, then because of Proposition \ref{converg-diff},
$$(\tilde{U}^{(n)}_{C+\theta},\tilde{V}^{(n)}_{C+\theta})\xrightarrow[n\to\infty]{(d)}D_{C+\theta}^{(0)}\sim N(e^{(C+\theta)A}z_0,\Sigma_{C+\theta})\ \ \text{and}\ \ (\tilde{U}_\infty^{(n)},\tilde{V}_\infty^{(n)})\xrightarrow[n\to\infty]{(d)}D_\infty\sim N(0,\Sigma).$$
Because of this and Lemma \ref{small-tv},
\begin{align*}
\lim\limits_{n\to\infty}\PR\left(\tilde{U}_{C+\theta}^{(n)}+\tilde{V}_{C+\theta}^{(n)}\le\kappa_\theta\right)&=\PR\left(D_{C+\theta,1}^{(0)}+D_{C+\theta,2}^{(0)}\le\kappa_\theta\right)\\&\le\PR\left(D_{C+\theta,1}+D_{C+\theta,2}\le\kappa_\theta\right)+\eta\\&=\PR\left[N\left(g(2\lambda-1)\cdot e^{-(1-\beta)\theta+c(\beta)},\dfrac{1}{1-\beta}\right)\le\kappa_\theta\right]+\eta.
\end{align*}
Similarly,
\begin{align*}
\lim\limits_{n\to\infty}\PR\left(\tilde{U}_{\infty}^{(n)}+\tilde{V}_{\infty}^{(n)}\le\kappa_\theta\right)&=\PR\left(D_{\infty,1}+D_{\infty,2}\le\kappa_\theta\right)\\&\ge\PR\left[N\left(0,\dfrac{1}{1-\beta}\right)\le\kappa_\theta\right]-\eta.
\end{align*}
Choosing the set $(-\infty,\kappa_\theta\sqrt{n}]$ as the distinguishing statistic, we get that
\begin{align*}
&\left\lVert H_{C+\theta}^{(U,V)}((u_n,v_n),\cdot)-\pi_{U,V}\right\rVert_{\text{TV}}\ge\PR\left(U_\infty^{(n)}+V_\infty^{(n)}\le\kappa_\theta\sqrt{n}\right)-\PR\left(U_{C+\theta}^{(n)}+V_{C+\theta}^{(n)}\le\kappa_\theta\sqrt{n}\right)
\\&\stackrel{(\ref{dist-stat})}{\Rightarrow}\ \liminf\limits_{n\to\infty}\left\lVert H_{C+\theta}^{(U,V)}((u_n,v_n),\cdot)-\pi_{U,V}\right\rVert_{\text{TV}}\ge\Psi(\lambda,\theta)-2\eta.
\end{align*}
Working in a similar fashion as in the upper bound, we can finish the proof of (\ref{lb-int}). To conclude the proof of the lower bound, we observe that for any $\eta>0$, there exist $\varepsilon,C>0$ such that
\begin{align*}
&\liminf\limits_{n\to\infty}\left\lVert H_{t_{n,\theta}}^{(n)}(\sigma_n,\cdot)-\pi_n\right\rVert_{\text{TV}}\\\ge\ &\left(\PR\left[\left(
\dfrac{U_{t_{n,-C}}}{\sqrt{n}},\dfrac{V_{t_{n,-C}}}{\sqrt{n}}\right)\in\mathcal{R}_{\varepsilon,C}\right]\right)\cdot\inf\limits_{(u,v)\in\sqrt{n}\mathcal{R}_{\varepsilon,C}}\left\lVert H_{C+\theta}^{(U,V)}((u,v),\cdot)-\pi_{U,V}\right\rVert_{\text{TV}}\\\ge\ &(1-\eta)(\Psi(\lambda,\theta)-2\eta).
\end{align*}
Since $\eta>0$ was arbitrary, we have successfully proven that
$$\liminf\limits_{n\to\infty}\left\lVert H_{t_{n,\theta}}^{(n)}(\sigma_n,\cdot)-\pi_n\right\rVert_{\text{TV}}\ge\Psi(\lambda,\theta),$$
and the proof of Proposition \ref{first-step} has been concluded.
\end{proof}
\begin{cor}\label{first-step-sup}
For any $\varepsilon_0>0$,
$$\limsup\limits_{n\to\infty}\sup\limits_{\sigma:\  |m(\sigma)|\le(1-\varepsilon_0)n}\left\lVert H_{t_{n,\theta}}^{(n)}(\sigma,\cdot)-\pi_n\right\rVert_{\text{TV}}\le\Psi(\theta).$$
\end{cor}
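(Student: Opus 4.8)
The plan is to bootstrap Proposition~\ref{first-step} to the claimed uniform bound by a soft compactness-and-subsequence argument, preceded by a symmetry reduction. The only substantive inputs are: the global spin-flip symmetry of the model, the strict inequality $\Psi(\lambda,\theta)<\Psi(\theta)$ for every $\lambda\in[1/2,1)$ (supplied by Proposition~\ref{first-step}, together with the convention $\Psi(1/2,\theta)=0$, and using that $\Psi$ takes values in $(0,1)$), and compactness of a closed bounded interval of magnetization densities.

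First I would reduce to nonnegative magnetization. Since $\sigma\mapsto-\sigma$ leaves both the Ising measure $\mu_{\beta/n}$ and the Glauber transition rates invariant, we have $\lVert H_t^{(n)}(\sigma,\cdot)-\pi_n\rVert_{\text{TV}}=\lVert H_t^{(n)}(-\sigma,\cdot)-\pi_n\rVert_{\text{TV}}$ for all $\sigma$ and $t$, and $m(-\sigma)=-m(\sigma)$. Hence the supremum in the statement equals $S_n:=\sup\{\lVert H_{t_{n,\theta}}^{(n)}(\sigma,\cdot)-\pi_n\rVert_{\text{TV}}: 0\le m(\sigma)\le(1-\varepsilon_0)n\}$, and every such $\sigma$ satisfies $|A_\sigma|/n\in[1/2,1-\varepsilon_0/2]$. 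As this is a supremum over a finite set, it is attained.

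Next I would argue by contradiction. Suppose $\limsup_{n\to\infty}S_n=L>\Psi(\theta)$. Choose a subsequence $(n_k)$ along which $S_{n_k}\to L$, and for each $k$ a configuration $\sigma_{n_k}$ attaining $S_{n_k}$. By compactness of $[1/2,1-\varepsilon_0/2]$ we may pass to a further subsequence (not relabeled) along which $|A_{n_k}|/n_k\to\lambda\in[1/2,1-\varepsilon_0/2]\subset[1/2,1)$. Proposition~\ref{first-step}, whose proof uses only that the index tends to infinity and hence is valid along $(n_k)$, then gives
$$\lim_{k\to\infty}\bigl\lVert H_{t_{n_k,\theta}}^{(n_k)}(\sigma_{n_k},\cdot)-\pi_{n_k}\bigr\rVert_{\text{TV}}=\Psi(\lambda,\theta).$$
The left-hand side equals $L$, whereas $\Psi(\lambda,\theta)<\Psi(\theta)$ since $\lambda<1$; this contradicts $L>\Psi(\theta)$. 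Therefore $\limsup_{n\to\infty}S_n\le\Psi(\theta)$, which is the Corollary.

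I do not expect a genuine obstacle here, as the argument is entirely routine. The two points that need a little care are: performing the spin-flip reduction so that the limiting density $\lambda$ lands in $[1/2,1)$ (rather than possibly in $(0,1/2)$, outside the range for which Proposition~\ref{first-step} is stated), and the observation that Proposition~\ref{first-step} may legitimately be invoked along a subsequence of indices—immediate, since all of its ingredients are "for all large $n$" statements that transfer to subsequences without change.
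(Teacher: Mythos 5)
Your argument is correct and is essentially the paper's own proof: assume the bound fails, extract a sequence of near-maximizing configurations with $|A_n|/n$ converging (by compactness) to some $\lambda\in[\varepsilon_0/2,1-\varepsilon_0/2]$, use the $\sigma\mapsto-\sigma$ symmetry to reduce to $\lambda\ge1/2$, and contradict the strict inequality $\Psi(\lambda,\theta)<\Psi(\theta)$ from Proposition~\ref{first-step}. You spell out the subsequence extraction and the applicability of the proposition along subsequences slightly more explicitly than the paper does, but the route is identical.
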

\begin{proof}
If this statement is false, there exists some sequence of initial conditions $\sigma_n\in\{-1,1\}^n$ such that $|A_n|/n\to\lambda\in(\frac{\varepsilon_0}{2},\frac{2-\varepsilon_0}{2})$ and
$$\limsup\limits_{n\to\infty}\left\lVert H_{t_{n,\theta}}^{(n)}(\sigma_n,\cdot)-\pi_n\right\rVert_{\text{TV}}>\Psi(\theta).$$
We can, without loss of generality, due to the symmetry of the model, assume that $\lambda\ge1/2$. Observe that this set of conditions now directly contradicts Proposition \ref{first-step}.
\end{proof}
\begin{proof}[\textbf{Proof of Theorem \ref{main-theorem}}]
\textbf{Proof of the upper bound:} Assume, for the sake of contradiction, that for some $\theta\in\R$,
$$\limsup\limits_{n\to\infty}d_n(t_{n,\theta})>\Psi(\theta).$$
Then, there exists a sequence of configurations $\sigma_n\in\{-1,1\}^n$ such that
$$\limsup\limits_{n\to\infty}\left\lVert H_{t_{n,\theta}}^{(n)}(\sigma_n,\cdot)-\pi_n\right\rVert_{\text{TV}}>\Psi(\theta).$$
Let $A_n=\{i\in[n]:\sigma_n(i)=1\}$. Without loss of generality, due to the symmetry of the model, suppose that for any $n$, $|A_n|\ge n/2$. We can assume, by extracting a subsequence and modifying the rest of the sequence if necessary, that $|A_n|/n\to\lambda\in[1/2,1]$. The cases $\lambda\in[1/2,1)$ are not possible because of Proposition \ref{first-step}, so we only have to deal with the case $\lambda=1$. Let $\eta>0$. Choose $t_0>0$ such that $\Psi\left(\theta-t_0\right)<\Psi(\theta)+\eta$.
Such $t_0$ exists, as noted in Remark \ref{cont-of-Psi}.
Due to the estimates (\ref{first-mom}) and (\ref{sec-mom}) in the proof of Lemma \ref{1st-2nd-mom}, at time $t_0$,
$$\dfrac{\E(f_n(Y_{t_0})^2)}{\E(f_n(Y_{t_0}))^2}=\dfrac{e^{-2(1-\beta)t_0}f_n(m(\sigma_n))^2+O\left(\dfrac{1}{n}\right)}{e^{-2(1-\beta)t_0}f_n(m(\sigma_n))^2+O\left(\dfrac{1}{n}\right)}\ \ \Rightarrow\ \ \dfrac{\text{Var}(f_n(Y_{t_0}))}{\E(f_n(Y_{t_0}))^2}=O\left(\dfrac{1}{n}\right).$$
Due to this estimate, Chebyshev's inequality and the fact that $g(x)\ge x$ for $x>0$, if $n$ is large enough,
\begin{align*}
\PR\left[\dfrac{Y_{t_0}}{n}\notin\left(0,\dfrac{1+e^{-(1-\beta)t_0}}{2}\right)\right]&\le\PR\left[f_n(Y_{t_0})\notin\left(0,\dfrac{1+e^{-(1-\beta)t_0}}{2}\right)\right]\\&\le\PR\left[|f_n(Y_{t_0})-\E(f_n(Y_{t_0})|\ge\dfrac{1-e^{(1-\beta)t_0}}{3}\cdot\E(f_n(Y_{t_0})\right]
\\&\le\dfrac{9}{(1-e^{-(1-\beta)t_0})^2}\cdot\dfrac{\text{Var}(f_n(Y_{t_0}))}{\E(f_n(Y_{t_0}))^2}
\\&\le\eta.
\end{align*}
Therefore, if we set $\varepsilon_0=\frac{1-e^{-(1-\beta)t_0}}{2}>0$
\begin{align*}
&\limsup\limits_{n\to\infty}\left\lVert H_{t_{n,\theta}}^{(n)}(\sigma_n,\cdot)-\pi_n\right\rVert_{\text{TV}}\\\le\ &\limsup\limits_{n\to\infty}\left(\PR\left[\dfrac{Y_{t_0}}{n}\notin\left(0,\dfrac{1+e^{-(1-\beta)t_0}}{2}\right)\right]+\sup\limits_{\sigma:\  |m(\sigma)|\le(1-\varepsilon_0)n}\left\lVert H_{t_{n,\theta-t_0}}^{(n)}(\sigma,\cdot)-\pi_n\right\rVert_{\text{TV}}\right)
\\\le\ &\Psi(\theta-t_0)+\eta\le\Psi(\theta)+2\eta.
\end{align*}
Since $\eta>0$ was arbitrary, we have reached a contradiction.\\\\
\textbf{Proof of the lower bound:} 
Let $\eta>0$ and $\lambda\in(1/2,1)$ be such that $\Psi(\lambda,\theta)>\Psi(\theta)-\eta$. Such $\lambda$ exists, as noted in Remark \ref{cont-of-Psi}. Due to Proposition \ref{first-step}, for any sequence $\sigma_n\in\{-1,1\}^n$ with $|A_n|/n\to\lambda$,
$$\lim\limits_{n\to\infty}\left\lVert H_{t_{n,\theta}}^{(n)}(\sigma_n,\cdot)-\pi_n\right\rVert_{\text{TV}}=\Psi(\lambda,\theta)>\Psi(\theta)-\eta.$$
Therefore,
$$\liminf\limits_{n\to\infty}d_n(t_{n,\theta})\ge\Psi(\theta)-\eta,$$
and since $\eta>0$ was arbitrary, Theorem \ref{main-theorem} follows.
\end{proof}

\end{document}